\theoremstyle{plain}
\newtheorem{thm}{Theorem}[section]
\newtheorem{prop}[thm]{Proposition}
\newtheorem{lem}[thm]{Lemma}
\newtheorem{cor}[thm]{Corollary}
\newtheorem*{thm*}{Theorem}
\theoremstyle{definition}
\newtheorem{dfn}[thm]{Definition}
\newtheorem{ex}[thm]{Example}
\newtheorem{ques}[thm]{Question}
\newtheorem{rem}[thm]{Remark}
\newtheorem{conv}[thm]{Convention}
\theoremstyle{remark}
\newtheorem*{ac}{Acknowledgments}
\numberwithin{equation}{thm}
\def\Hom{\operatorname{Hom}}
\def\depth{\operatorname{depth}}
\def\tr{\operatorname{Tr}}
\def\syz{\mathrm{\Omega}}
\def\Ext{\operatorname{Ext}}
\def\id{\mathrm{id}}
\def\Coker{\operatorname{Coker}}
\def\RHom{\operatorname{{\bf R}Hom}}
\def\edim{\operatorname{edim}}
\def\e{\operatorname{e}}
\def\mod{\operatorname{\mathsf{mod}}}
\def\CM{\operatorname{\mathsf{CM}}}
\def\add{\operatorname{\mathsf{add}}}
\def\res{\operatorname{\mathsf{res}}}
\def\Spec{\operatorname{\mathsf{Spec}}}
\def\Sing{\operatorname{\mathsf{Sing}}}
\def\Ass{\operatorname{\mathsf{Ass}}}
\def\Min{\operatorname{\mathsf{Min}}}
\def\ng{\operatorname{\mathsf{NonGor}}}
\def\s{\operatorname{\mathsf{S}}}
\def\m{\mathfrak m}
\def\p{\mathfrak p}
\def\q{\mathfrak q}
\def\X{{\mathcal X}}
\def\Y{{\mathcal Y}}
\def\c{{\mathcal C}}
\def\V{{\mathbb V}}
\def\C{{\mathbb C}}
\def\lp{{}_{\CM}^{\hspace{6pt}\perp}}
\def\rp{_{\hspace{2pt}\CM}^\perp}
\begin{document}
%\setlength{\baselineskip}{15pt}
%%%%%%%%%%%%%%%%%%%%%%%%%%%%%%%%%%%%%%%%%%%%%%%%%%%%%%%%%%%%%%%%%%%%%%%%%%%%%%%
%%%%%%%%%%%%%%%%%%%%%%%%%%%%%%%%%%%%%%%%%%%%%%%%%%%%%%%%%%%%%%%%%%%%%%%%%%%%%%%
\title[Classifying resolving subcategories]{Classifying resolving subcategories over a Cohen-Macaulay local ring}
\author{Ryo Takahashi}
\address{Department of Mathematical Sciences, Faculty of Science, Shinshu University, 3-1-1 Asahi, Matsumoto, Nagano 390-8621, Japan/Department of Mathematics, University of Nebraska, Lincoln, NE 68588-0130, USA}
\email{takahasi@math.shinshu-u.ac.jp}
\urladdr{http://math.shinshu-u.ac.jp/~takahasi/}
%\subjclass[2000]{18E30, 16D90, 13C14}
%\keywords{thick subcategory, specialization-closed subset, derived category, Cohen-Macaulay module, hypersurface}
%\dedicatory{Dedicated to Professor Shiro Goto on the occasion of his sixtieth birthday}
\thanks{2010 {\em Mathematics Subject Classification.} 13C60, 13C14, 16G60}
\thanks{{\em Key words and phrases.} resolving subcategory, specialization-closed subset, Cohen-Macaulay module, minimal multiplicity, contravariantly finite subcategory, finite Cohen-Macaulay representation type}
\thanks{The author was partially supported by JSPS Grant-in-Aid for Young Scientists (B) 22740008 and by JSPS Postdoctoral Fellowships for Research Abroad}
\begin{abstract}
Let $R$ be a Cohen-Macaulay local ring.
Denote by $\mod R$ the category of finitely generated $R$-modules.
In this paper, we consider the classification problem of resolving subcategories of $\mod R$ in terms of specialization-closed subsets of $\Spec R$.
We give a classification of the resolving subcategories closed under tensor products and transposes.
Under restrictive hypotheses, we also give better classification results.
\end{abstract}
\maketitle
%\tableofcontents
%%%%%%%%%%%%%%%%%%%%%%%%%%%%%%%%%%%%%%%%%%%%%%%%%%%%%%%%%%%%%%%%%%%%%%%%%%%%%%%
%%%%%%%%%%%%%%%%%%%%%%%%%%%%%%%%%%%%%%%%%%%%%%%%%%%%%%%%%%%%%%%%%%%%%%%%%%%%%%%
%%%%%%%%%%%%%%%%%%%%%%%%%%%%%%%%%%%%%%%%%%%%%%%%%%%%%%%%%%%%%%%%%%%%%%%%%%%%%%%

%%%%%%%%%%%%%%%%%%%%%%%%%%%%%%%%%%%%%%%%%%%%%%%%%%%%%%%%%%%%%%%%%%%%%%%%%%%%%%%
%%%%%%%%%%%%%%%%%%%%%%%%%%%%%%%%%%%%%%%%%%%%%%%%%%%%%%%%%%%%%%%%%%%%%%%%%%%%%%%
\section{Introduction}\label{sec1}
%%%%%%%%%%%%%%%%%%%%%%%%%%%%%%%%%%%%%%%%%%%%%%%%%%%%%%%%%%%%%%%%%%%%%%%%%%%%%%%
%%%%%%%%%%%%%%%%%%%%%%%%%%%%%%%%%%%%%%%%%%%%%%%%%%%%%%%%%%%%%%%%%%%%%%%%%%%%%%%

In the present paper, we are interested in classifying resolving subcategories of the category of finitely generated modules over a Cohen-Macaulay local ring.
One of the main results of this paper is the following, which will be proved in Theorem \ref{tentr}.

\begin{thm}
Let $R$ be a Cohen-Macaulay local ring.
Taking the nonfree loci makes a bijection between:
\begin{itemize}
\item
the resolving subcategories of $\mod R$ closed under tensor products and transposes,
\item
the specialization-closed subsets of $\Spec R$ contained in $\s(R)$.
\end{itemize}
\end{thm}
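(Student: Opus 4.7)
The plan is to construct explicit mutually inverse maps between the two collections. Let $\Phi$ send a resolving subcategory $\X$ to $\Phi(\X)=\bigcup_{M\in\X}\v(M)$, where $\v(M)=\{\p\in\Spec R : M_\p\text{ is not }R_\p\text{-free}\}$, and let $\Psi$ send a specialization-closed set $W\subseteq\s(R)$ to $\Psi(W)=\{M\in\mod R : \v(M)\subseteq W\}$. The first step is bookkeeping: verifying that $\v(M)$ is closed in $\Spec R$ and contained in $\s(R)$ (built into the role of $\s(R)$), and that $\Psi(W)$ is closed under extensions, direct summands, syzygies, tensor products, and transposes, since $\v(-)$ interacts well with each of these operations (the last two using that $\tr$ is defined up to free summands and that tensoring with something locally free does not enlarge the nonfree locus).

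For $\Phi\circ\Psi=\id$, the containment $\Phi(\Psi(W))\subseteq W$ is immediate. For the reverse, given $\p\in W$ I would exhibit an $M\in\Psi(W)$ with $\p\in\v(M)$; the natural candidate is a sufficiently high syzygy of $R/\p$, whose nonfree locus coincides with $V(\p)\subseteq W$. If additional care is needed to keep this witness inside $\s(R)$, a Cohen--Macaulay approximation of such a syzygy does the job.

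The substantive direction is $\Psi\circ\Phi=\id$: given $\X$ resolving, tensor-closed and transpose-closed, and $M$ with $\v(M)\subseteq\Phi(\X)$, I must deduce $M\in\X$. I would proceed in three stages. First, since $\Spec R$ is noetherian, $\v(M)$ has only finitely many minimal primes $\p_1,\dots,\p_r$, and for each I can select $X_i\in\X$ with $\p_i\in\v(X_i)$, so that $\v(M)\subseteq\bigcup_i\v(X_i)$. Second, closure under extensions produces $Y=X_1\oplus\cdots\oplus X_r\in\X$ with $\v(Y)\supseteq\v(M)$. Third and most delicately, I must convert the pointwise inclusion $\v(M)\subseteq\v(Y)$ into the categorical conclusion $M\in\X$. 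Here the tensor- and transpose-closures enter essentially: one forms Auslander-style constructions such as $\tr M\otimes_R N$ with $N$ built from $Y$, whose support and Ext behavior is governed by $\v(Y)$; transpose-closedness keeps these maneuvers inside $\X$, and tensor-closedness produces, inside $\X$, a witness module against which $M$ can be tested. The outcome is an Ext-vanishing statement forceful enough to push a high syzygy of $M$ into $\add Y$, after which the resolving operations reconstitute $M$ itself.

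The main obstacle is precisely this final step. Passing from the local, pointwise assertion that $M$ is free wherever the whole of $\X$ is free, to the global categorical assertion that $M$ is built from $\X$ by resolving operations, is the heart of the theorem, and it is exactly why both additional closure hypotheses are imposed: tensor-closedness makes nonfree loci detectable multiplicatively by members of $\X$, and transpose-closedness turns Auslander duality into an operation internal to $\X$, so that the Ext-vanishing dictated by the nonfree-locus condition can be translated into a syzygy membership visible to the resolving structure.
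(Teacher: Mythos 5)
Your setup, the well-definedness checks, and the direction $\V\circ\V^{-1}=\id$ all track with the paper. The paper uses $R/\p$ itself as the witness (its nonfree locus is exactly $V(\p)$ once $\p\in\s(R)$), so the detour through syzygies and CM approximations is unnecessary, but harmless.

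The gap is exactly where you flag it: the hard direction $\V^{-1}\circ\V=\id$. Two concrete problems. First, your stage-three plan ends with pushing a \emph{syzygy} of $M$ into $\X$ and then claiming ``the resolving operations reconstitute $M$ itself.'' That step is false: resolving subcategories are closed under syzygies, not cosyzygies, so $\syz^n M\in\X$ does not yield $M\in\X$ (already $\X=\add R$ and $M$ of finite projective dimension is a counterexample). Second, and more fundamentally, the ``Auslander-style'' and ``Ext-vanishing'' maneuvers are gestured at but never produce a concrete exact sequence; nothing in the sketch explains how to get a single nontrivial module (such as $k$, or $\kappa(\p)$ locally) into $\X$ from the tensor/transpose hypotheses.

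The paper's route is different and the two missing ingredients are precisely the ones you would need. (i) A lemma (the paper's Lemma \ref{xkx}) showing that any resolving subcategory $\X$ closed under tensor products and transposes that contains a nonfree module must contain $k$. Its proof is not an Ext-vanishing argument: one invokes the existence in $\X$ of a nonfree module $X$ locally free on the punctured spectrum (from \cite[Theorem A]{res}), observes that $\Ext^1(X,\syz X)$ is nonzero of finite length, picks a socle element $\sigma$, and runs the long exact sequence for $\tr_{\syz X}(-)$ on the corresponding extension $0\to\syz X\to Y\to X\to0$ to manufacture a four-term exact sequence beginning with $k$ and otherwise consisting of modules in $\X$; closure under tensor products and transposes is used to see that $\tr_{\syz X}(-)=\syz X\otimes\tr(-)$ stays in $\X$. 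Localizing, one gets $\kappa(\p)\in\add_{R_\p}\X_\p$ for every $\p\in\V(\X)$. (ii) An induction on $\dim\V_R(M)$ (Proposition \ref{resmodkey}) that, using the existence of exact sequences $0\to L\to N\to X\to0$ with $X\in\X$, $M$ a summand of $N$, and $\dim\V(L)<\dim\V(M)$ (Proposition \ref{lnx}), upgrades the pointwise condition $\kappa(\p)\in\add_{R_\p}\X_\p$ to the global membership $M\in\X$. Your stages 1--2 produce a $Y\in\X$ with $\v(M)\subseteq\v(Y)$, but without (i) and (ii) there is no mechanism to turn this into $M\in\X$; in particular you would need the induction, not a single syzygy-membership test.
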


\noindent
Here, $\mod R$ denotes the category of finitely generated $R$-module and $\s(R)$ denotes the set of prime ideals $\p$ such that $R_\p$ is not a field.

Under more restrictive hypotheses, we will give better results.
More precisely, setting certain local conditions on the punctured spectrum, we will consider removing closure under tensor products and transposes from the above theorem; see Theorem \ref{mmthm} and Corollary \ref{fcrtcor}.
These two results should also be compared with \cite[Theorem 5.13]{stcm}, which gives a similar classification under a different local condition on the punctured spectrum.

We will also study in Section \ref{sec4} classifying thick subcategories of $\mod R$ containing $R$, and consider in Section \ref{sec6} determining the contravariantly finite resolving subcategories of Cohen-Macaulay modules containing a canonical module.

%%%%%%%%%%%%%%%%%%%%%%%%%%%%%%%%%%%%%%%%%%%%%%%%%%%%%%%%%%%%%%%%%%%%%%%%%%%%%%%
%%%%%%%%%%%%%%%%%%%%%%%%%%%%%%%%%%%%%%%%%%%%%%%%%%%%%%%%%%%%%%%%%%%%%%%%%%%%%%%
\section{Preliminaries}\label{sec2}
%%%%%%%%%%%%%%%%%%%%%%%%%%%%%%%%%%%%%%%%%%%%%%%%%%%%%%%%%%%%%%%%%%%%%%%%%%%%%%%
%%%%%%%%%%%%%%%%%%%%%%%%%%%%%%%%%%%%%%%%%%%%%%%%%%%%%%%%%%%%%%%%%%%%%%%%%%%%%%%

In this section, we make several definitions of the notions which we will deal with in this paper, and state their basic properties for later use.
We begin with setting our convention.

\begin{conv}
\begin{enumerate}[(1)]
\item
Throughout the rest of this paper, we assume that all rings are commutative Noetherian rings, and all modules are finitely generated.
Let $R$ be a local ring of (Krull) dimension $d$.
We denote by $\m$ the maximal ideal of $R$, by $k$ the residue field of $R$ and by $\mod R$ the category of finitely generated $R$-modules.
\item
Let $\c$ be a category.
In this paper, a {\em subcategory} of $\c$ always mean a strict full subcategory of $\c$.
(Recall that a subcategory $\X$ of $\c$ is said to be {\em strict} if every object of $\c$ that is isomorphic in $\c$ to some object of $\X$ belongs to $\X$.)
The {\em subcategory} of $\c$ consisting of objects $\{M_\lambda\}_{\lambda\in\Lambda}$ always mean the smallest strict full subcategory of $\c$ to which $M_\lambda$ belongs for all $\lambda\in\Lambda$.
Note that this coincides with the full subcategory of $\c$ consisting of all objects $X\in\c$ such that $X\cong M_\lambda$ for some $\lambda\in\Lambda$.
\item
For $n\ge0$, the {\em $n$-th syzygy} of an $R$-module $M$ is defined to be the image of the $n$-th differential map in a minimal free resolution of $M$, and is denoted by $\syz_R^nM$.
We simply write $\syz_RM$ instead of $\syz_R^1M$.
Note that the $n$-th syzygy of a given $R$-module is uniquely determined up to isomorphism because so is a minimal free resolution.
\item
When $R$ is a Cohen-Macaulay local ring, we say that an $R$-module $M$ is {\em Cohen-Macaulay} if $\depth_RM=d$.
Such a module is usually called a {\em maximal} Cohen-Macaulay $R$-module, but in this paper, we call it just Cohen-Macaulay.
We denote by $\CM(R)$ the category of Cohen-Macaulay $R$-modules.
Note that a subcategory of $\CM(R)$ can naturally be regarded as a subcategory of $\mod R$.
\item
We will often omit a letter indicating the base ring if there is no danger of confusion.
For example, we will often write $\Ext^i(M,N)$, $\syz^iM$ and $M\otimes N$ instead of $\Ext_R^i(M,N)$, $\syz_R^iM$ and $M\otimes_RN$, respectively.
\end{enumerate}
\end{conv}

\begin{dfn}
\begin{enumerate}[(1)]
\item
A subcategory of $\mod R$ is called {\em resolving} if it contains $R$, and if it is closed under direct summands, extensions and kernels of epimorphisms.
\item
Let $R$ be Cohen-Macaulay.
A subcategory $\X$ of $\CM(R)$ is called {\em thick} provided that $\X$ is closed under direct summands and that for each exact sequence $0 \to L \to M \to N \to 0$ of Cohen-Macaulay $R$-modules, if two of $L,M,N$ are in $\X$, then so is the third.
\end{enumerate}
\end{dfn}

A subcategory $\X$ of $\mod R$ is resolving if and only if $\X$ contains $R$ and is closed under direct summands, extensions and syzygies (cf. \cite[Lemma 3.2]{Y2}).
A lot of subcategories of $\mod R$ are known to be resolving; see \cite[Example 1.6]{stcm}.
Every thick subcategory of $\CM(R)$ containing $R$ is a resolving subcategory of $\mod R$.
One can construct thick subcategories of $\CM(R)$ by restricting resolving subcategories appearing in \cite[Example 1.6]{stcm} to $\CM(R)$.

\begin{dfn}
The {\em nonfree locus} $\V_R(M)$ of an $R$-module $M$ is defined as the set of prime ideals $\p$ of $R$ such that $M_\p$ is nonfree as an $R_\p$-module.
For a subcategory $\X$ of $\mod R$ we set $\V_R(\X)=\bigcup_{M\in\X}\V_R(M)$ and call it the {\em nonfree locus} of $\X$.
\end{dfn}

Here $\V_R(M),\V_R(\X)$ are the same as $\mathcal{V}_R(M),\mathcal{V}_R(\X)$ in \cite{stcm}.
As is well-known, the nonfree locus of an $R$-module is always a closed subset of $\Spec R$ in the Zariski topology.

Let $\X$ be a subcategory of $\mod R$.
We denote by $\add_R\X$ the {\em additive closure} of $\X$, that is, the subcategory of $\mod R$ consisting of all direct summands of finite direct sums of modules in $\X$.
For a prime ideal $\p$ of $R$, we denote by $\X_\p$ the subcategory of $\mod R_\p$ consisting of all $R_\p$-modules of the form $X_\p$ with $X\in\X$.

\begin{prop}\label{lnx}
Let $\X$ be a resolving subcategory of $\mod R$, and let $M$ be an $R$-module.
Let $\Gamma\ne\emptyset$ be a finite subset of $\Spec R$.
Assume $M_\p\in\add_{R_\p}\X_\p$ for every $\p\in\Gamma$.
Then
\begin{enumerate}[\rm (1)]
\item
There exists an exact sequence $0 \to L \to N \to X \to 0$ of $R$-modules satisfying the following four conditions:
\begin{enumerate}[\rm (i)]
\item
$X$ belongs to $\X$,
\item
$M$ is a direct summand of $N$,
\item
$\V_R(L)$ is contained in $\V_R(M)$,
\item
$\V_R(L)$ does not intersect $\Gamma$.
\end{enumerate}
\item
Suppose that $R$ is a Cohen-Macaulay local ring, that $\X$ is contained in $\CM(R)$ and that $M$ is a Cohen-Macaulay $R$-module.
Then the modules $L,N$ in the exact sequence in (1) can be chosen as Cohen-Macaulay $R$-modules.
\end{enumerate}
\end{prop}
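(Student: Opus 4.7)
The plan is to build a module $X\in\X$ together with maps $\sigma\colon M\to X$ and $\pi\colon X\to M$ whose composition $\pi\sigma$ is scalar multiplication by an element of $R$ that is a unit at every $\p\in\Gamma$, and then to read off the required sequence from a carefully constructed surjection $\Psi\colon N\to X$. As a preliminary step I will reduce to the case that the primes in $\Gamma$ are pairwise incomparable: since $\V_R(L)$ is always Zariski-closed (hence specialization-closed) for any $R$-module $L$, freeness of $L$ at the maximal elements of $\Gamma$ forces freeness at every $\q\subseteq\p$ with $\p$ maximal in $\Gamma$, so it suffices to prove (iv) for the maximal elements.

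For each $\p\in\Gamma$, choose $X^{(\p)}\in\X$ with $M_\p\in\add_{R_\p}X^{(\p)}_\p$. A local section/retraction pair for $M_\p$ in $X^{(\p)}_\p$ corresponds, under the canonical identifications $\Hom_R(M,X^{(\p)})_\p=\Hom_{R_\p}(M_\p,X^{(\p)}_\p)$ (and symmetrically), to $R$-module maps $\sigma_\p\colon M\to X^{(\p)}$ and $\pi_\p\colon X^{(\p)}\to M$ satisfying $\pi_\p\sigma_\p=t_\p\cdot 1_M$ for some $t_\p\in R\setminus\p$ after clearing denominators. Pairwise incomparability of $\Gamma$ lets prime avoidance supply elements $a_\p\in R$ with $a_\p\notin\p$ and $a_\p\in\q$ for every $\q\in\Gamma\setminus\{\p\}$. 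Now set $X=\bigoplus_{\p\in\Gamma}X^{(\p)}\in\X$, $\sigma\colon M\to X$ by $m\mapsto(a_\p\sigma_\p(m))_\p$, and $\pi\colon X\to M$ by summing the $\pi_\p$. Then $\pi\sigma=u\cdot 1_M$ with $u=\sum_\p a_\p t_\p$, and at each $\q\in\Gamma$ the sum reduces modulo $\q$ to the nonzero $a_\q t_\q$, so $u$ is a unit at every $\p\in\Gamma$. After replacing $X$ by $X\oplus R^k$ with an added surjection $R^k\to M$ summed into $\pi$, I may assume $\pi$ is globally surjective, and the relation $\pi\sigma=u\cdot 1_M$ still holds.

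Put $K=\ker\pi$, so $0\to K\to X\to M\to 0$ is exact. Choose $\phi\colon R^n\to X$ so that $\Psi\colon N:=M\oplus K\oplus R^n\to X$, defined by $(m,k,r)\mapsto \sigma(m)+k+\phi(r)$, is surjective; this is possible since $(\sigma,\iota_K)$ has finitely generated cokernel. Set $L=\ker\Psi$. Then $X\in\X$ and $M$ is a direct summand of $N$, giving (i) and (ii). Solving $\Psi(m,k,r)=0$ forces $k=-\sigma(m)-\phi(r)$, and applying $\pi$ forces $um+\tau(r)=0$ with $\tau=\pi\phi\colon R^n\to M$; projecting $(m,k,r)\mapsto(m,r)$ identifies $L$ with the kernel of $(u,\tau)\colon M\oplus R^n\to M$. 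Since $\pi\Psi$ is surjective and coincides with this map, one obtains an auxiliary short exact sequence $0\to L\to M\oplus R^n\to M\to 0$. At any $\q\notin\V_R(M)$, $M_\q$ is free, so this sequence splits and $L_\q$ is a direct summand of the free module $M_\q\oplus R^n_\q$, hence free, giving (iii). At any $\p\in\Gamma$, since $u$ is a unit in $R_\p$ the map $r\mapsto(-u^{-1}\tau(r),r)$ is a section, whence $L_\p\cong R^n_\p$ is free, giving (iv).

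For part (2), the same construction preserves the Cohen-Macaulay property: $X$ is a sum of modules in $\X\subseteq\CM(R)$, hence CM; the depth lemma applied to $0\to K\to X\to M\to 0$ shows $K$ is CM; therefore $N=M\oplus K\oplus R^n$ is CM; and a second depth-lemma application to $0\to L\to N\to X\to 0$ shows $L$ is CM. The main obstacle is assembling the local splittings at the various $\p\in\Gamma$ into a single global pair $(\sigma,\pi)$ with $\pi\sigma$ a unit-scalar simultaneously at every $\p\in\Gamma$; overcoming this is what forces both the preliminary reduction to pairwise-incomparable primes and the prime-avoidance construction of the weights $a_\p$.
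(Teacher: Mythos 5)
The paper does not prove this proposition directly but defers to the argument for \cite[Proposition 4.7]{stcm}; your self-contained proof is correct and follows the expected route: clear denominators of local section/retraction pairs to get $\sigma\colon M\to X$, $\pi\colon X\to M$ with $\pi\sigma=u\cdot 1_M$, use prime avoidance on the pairwise-incomparable maximal elements of $\Gamma$ to make $u$ a unit at every prime of $\Gamma$ simultaneously, enlarge $X$ by a free summand to make $\pi$ surjective, and extract $L=\ker\Psi$ together with the auxiliary sequence $0\to L\to M\oplus R^n\to M\to 0$ from which (iii) and (iv) drop out. One small phrasing quibble: in the verification of (iv) the map $r\mapsto(-u^{-1}\tau(r),r)$ is better described as the inverse of the projection $L_\p\to R_\p^n$ than as a ``section,'' but the computation $L_\p\cong R_\p^n$ is correct, and the two applications of the depth lemma in part (2) are exactly what is needed.
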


\begin{proof}
The second statement is stated in \cite[Proposition 4.7]{stcm}.
Along the same lines as in its proof, one can show the first statement.
\end{proof}

Let $\Phi$ be a subset of $\Spec R$.
We define the {\em dimension} $\dim\Phi$ of $\Phi$ as the supremum of $\dim R/\p$ with $\p\in\Phi$.
By definition, one has $\dim\Phi=-\infty$ if and only if $\Phi$ is empty.
We denote by $\min\Phi$ the set of minimal elements of $\Phi$ with respect to inclusion relation.
The subcategory of $\mod R$ consisting of all $R$-modules whose nonfree loci are contained in $\Phi$ is denoted by $\V^{-1}(\Phi)$.
When $R$ is Cohen-Macaulay, $\V^{-1}_{\CM}(\Phi)$ denotes the restriction of $\V^{-1}(\Phi)$ to $\CM(R)$.
We denote by $\s(R)$ the set of prime ideals $\p$ such that $R_\p$ is not a field.
The {\em singular locus} $\Sing R$ is defined as the set of prime ideals $\p$ such that $R_\p$ is singular, i.e., nonregular.
Recall that $\Phi$ is called {\em specialization-closed} if every prime ideal of $R$ containing some prime ideal in $\Phi$ belongs to $\Phi$.
The following two lemmas will frequently be used as basic tools in the proofs of our results.
The second one is a Cohen-Macaulay module version of the first one; Lemma \ref{cv}($i$) corresponds to Lemma \ref{nv}($i$) for each $1\le i\le 10$.

\begin{lem}\label{nv}
Let $R$ be a Cohen-Macaulay local ring, $M$ an $R$-module, $\X$ a resolving subcategory of $\mod R$ and $\Phi$ a specialization-closed subset of $\Spec R$ contained in $\s(R)$.
\begin{enumerate}[\rm (1)]
\item
If $\dim\V(M)=-\infty$, then $M\in\X$.
\item
If $\dim\V(M)=0$ and $k\in\X$, then $M\in\X$.
\item
If $\p\in\min\V_R(M)$, then $\V_{R_\p}(M_\p)=\{\p R_\p\}$, and $\dim\V_{R_\p}(M_\p)=0$.
\item
$\add_{R_\p}\X_\p$ is a resolving subcategory of $\mod R_\p$.
\item
If $M_\p\in\add_{R_\p}\X_\p$ for all $\p\in\min\V(M)$, then there is an exact sequence $0 \to L \to N \to X \to 0$ of $R$-modules with $X\in\X$, $\V(L)\subseteq\V(M)$ and $\dim\V(L)<\dim\V(M)$ such that $M$ is a direct summand of $N$.
\item
$\V(\X)$ is a specialization-closed subset of $\Spec R$ contained in $\s(R)$.
\item
$R/\p$ belongs to $\V^{-1}(\Phi)$ for all $\p\in\Phi$.
\item
$\V^{-1}(\Phi)$ is a resolving subcategory of $\mod R$.
\item
$\X$ is contained in $\V^{-1}(\V(\X))$.
\item
One has $\Phi=\V(\V^{-1}(\Phi))$.
\end{enumerate}
\end{lem}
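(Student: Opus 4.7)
The plan is to dispatch the ten parts in three groups: formal/localization items (3), (4), (6); Galois-correspondence items (7)--(10); and structural items (1), (5), with (2) singled out as the main obstacle.

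For the formal and Galois items, the arguments are essentially definition-chasing. Part (3) localizes the definition of $\V_R$, using minimality of $\p \in \V(M)$ to ensure that $M_\q$ is free for every $\q \subsetneq \p$. Part (4) verifies each resolving axiom for $\add_{R_\p}\X_\p$ by transferring across the exact functor $(-)_\p$. Part (6) combines the Zariski-closedness of $\V(M)$ for a single module with the fact that every module over a field is free. Part (7) shows that $\V(R/\p) \subseteq \overline{\{\p\}}$, which in turn lies in $\Phi$ by specialization-closure. Part (8) verifies the resolving axioms for $\V^{-1}(\Phi)$ by localizing at $\p \notin \Phi$: the outer terms of the relevant sequence become free over $R_\p$, and since free modules over a local ring are projective and direct summands of free modules are free, the third term is also free. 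Part (9) is tautological. Part (10) combines the obvious inclusion $\subseteq$ with the inclusion $\supseteq$: taking $M = R/\p$ for $\p \in \Phi$, part (7) gives $R/\p \in \V^{-1}(\Phi)$, and $(R/\p)_\p = \kappa(\p)$ is not $R_\p$-free because $\p \in \s(R)$.

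For the structural items: part (1) follows since $\V(M) = \emptyset$ forces $M$ to be locally free at every prime, hence projective (by a local-to-global argument on $\Ext^1_R(M,-)$), hence free over the local ring $R$, hence in $\X$. Part (5) applies Proposition \ref{lnx} with $\Gamma := \min \V(M)$; the conditions $\V(L) \subseteq \V(M)$ and $\V(L) \cap \min\V(M) = \emptyset$ together force $\dim\V(L) < \dim\V(M)$.

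The main obstacle is (2), since the hypothesis $\V(M) \subseteq \{\m\}$ does not make $M$ of finite length, and Proposition \ref{lnx} applied with $\Gamma := \{\m\}$ yields a circular hypothesis because $R$ is local at $\m$ (so $M_\m \in \add_{R_\m}\X_\m$ reduces to the desired conclusion $M \in \X$). My plan is to exploit $k \in \X$ via closure under extensions and syzygies: since $k$ is the only simple $R$-module, every finite-length module lies in $\X$, and hence so does every syzygy $\syz^i N$ for $N$ finite-length. For general $M \in \V^{-1}(\{\m\})$, I would aim to realize $M$ as a module built from $R$, finite-length modules, and their syzygies using the resolving operations, by an induction on an appropriate invariant such as the minimal number of generators of $M$ or the depth of $M$. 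At each inductive step, one wants to construct a short exact sequence in which $M$ sits in the kernel position with both outer terms already shown to be in $\X$ and strictly smaller in the chosen invariant; then kernel-of-epimorphism closure places $M$ in $\X$. The main difficulty is producing such a short exact sequence in a manner that keeps all relevant terms inside $\V^{-1}(\{\m\})$ while strictly decreasing the invariant; care is needed because naive embeddings into free modules (such as $M/H^0_\m(M) \hookrightarrow R^n$ with finite-length cokernel) can fail when $M$ has non-constant rank at the minimal primes of $R$.
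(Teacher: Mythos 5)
Your treatment of parts (1), (3)--(10) is correct and tracks the paper's approach closely: (5) via Proposition~\ref{lnx} with $\Gamma=\min\V(M)$ plus the observation that every prime in $\V(L)$ strictly contains some minimal prime of $\V(M)$; (7) and (10) via $\V_R(R/\p)\subseteq V(\p)$ and the fact that $\kappa(\p)$ is $R_\p$-free iff $R_\p$ is a field; (8) via localizing and splitting at $\p\notin\Phi$; the rest by localization and definition-chasing. The paper simply cites \cite{stcm} for (4), (7), (8), but your direct verifications are fine.

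The genuine gap is part (2), and you have correctly identified it but not closed it. The paper's proof of (2) rests entirely on a nontrivial structure theorem, \cite[Theorem 2.4]{stcm}, which asserts that any finitely generated module $M$ that is locally free on the punctured spectrum of a local ring can be built from $k$ by finitely many applications of syzygies, extensions, and direct summands. Since a resolving subcategory containing $k$ is closed under exactly these operations, $M\in\X$ follows at once. Your proposed induction on the minimal number of generators or depth is not obviously workable: the central difficulty you flag (constructing a short exact sequence that keeps the outer terms in $\V^{-1}(\{\m\})$ and strictly drops the invariant while avoiding rank obstructions at minimal primes) is real and is precisely what the cited theorem circumvents via a more delicate construction involving iterated ``pushforward''-style pushouts along a system of parameters. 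Without either citing that result or reproving it, your proof of (2) is incomplete, and since (2) underpins the base case of the induction in Proposition~\ref{resmodkey}, the gap is load-bearing.
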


\begin{proof}
(2) If $\dim\V(M)=0$, then we have $\V(M)=\{\m\}$.
This implies that $M$ is locally free on the punctured spectrum of $R$.
According to \cite[Theorem 2.4]{stcm}, the $R$-module $M$ can be constructed from $k$ by taking syzygies, extensions and direct summands finitely many times.
Since $\X$ contains $k$ and is closed under these operations, $M$ belongs to $\X$.

(4)(7)(8) See \cite[Lemma 4.8 and Proposition 1.15]{stcm}.

(5) Apply Proposition \ref{lnx}(1) to the set $\Gamma:=\min\V(M)$.

(10) If $\p\in\Phi$, then $\p\in\V_R(R/\p)$, and $\p\in\V(\V^{-1}(\Phi))$ by (7).
\end{proof}

\begin{lem}\label{cv}
Let $R$ be a Cohen-Macaulay local ring, $M$ a Cohen-Macaulay $R$-module, $\X$ a resolving subcategory of $\mod R$ contained in $\CM(R)$ and $\Phi$ a specialization-closed subset of $\Spec R$ contained in $\Sing R$.
\begin{enumerate}[\rm (1)]
\item
If $\dim\V(M)=-\infty$, then $M\in\X$.
\item
If $\dim\V(M)=0$ and $\syz^dk\in\X$, then $M\in\X$.
\item
If $\p\in\min\V_R(M)$, then $\V_{R_\p}(M_\p)=\{\p R_\p\}$, and $\dim\V_{R_\p}(M_\p)=0$.
\item
$\add_{R_\p}\X_\p$ is a resolving subcategory of $\mod R_\p$ contained in $\CM(R_\p)$.
\item
If $M_\p\in\add_{R_\p}\X_\p$ for all $\p\in\min\V(M)$, then there is an exact sequence $0 \to L \to N \to X \to 0$ of Cohen-Macaulay $R$-modules with $X\in\X$, $\V(L)\subseteq\V(M)$ and $\dim\V(L)<\dim\V(M)$ such that $M$ is a direct summand of $N$.
\item
$\V(\X)$ is a specialization-closed subset of $\Spec R$ contained in $\Sing R$.
\item
$\syz^d(R/\p)$ belongs to $\V^{-1}_{\CM}(\Phi)$ for all $\p\in\Phi$.
\item
$\V^{-1}_{\CM}(\Phi)$ is a resolving subcategory of $\mod R$ contained in $\CM(R)$.
\item
$\X$ is contained in $\V^{-1}_{\CM}(\V(\X))$.
\item
One has $\Phi=\V(\V^{-1}_{\CM}(\Phi))$.
\end{enumerate}
\end{lem}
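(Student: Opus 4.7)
The plan is to prove each of the ten items in parallel with the corresponding item of Lemma \ref{nv}, making the modifications necessary to stay inside $\CM(R)$. Items (1), (3), (9), (10) transcribe verbatim from the non-CM case, and items (4), (6), (8) essentially intersect the earlier statements with $\CM(R)$; the real work lies in items (2), (5), (6), (7).

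For (2), the hypothesis $\dim\V(M)=0$ forces $\V(M)\subseteq\{\m\}$, so $M$ is locally free on the punctured spectrum. I would invoke the Cohen-Macaulay version of \cite[Theorem 2.4]{stcm}, which produces $M$ from $\syz^d k$ using only syzygies, extensions and direct summands inside $\CM(R)$; since $\X$ is resolving in $\mod R$, $\X$ is closed under these three operations, so $M\in\X$. For (4), the fact that localization of a Cohen-Macaulay module at a prime over a Cohen-Macaulay local ring remains Cohen-Macaulay (via the dimension formula and the inequality $\depth_{R_\p}M_\p+\dim R/\p\ge\depth_RM$) gives the inclusion $\add_{R_\p}\X_\p\subseteq\CM(R_\p)$; the resolving part is Lemma \ref{nv}(4). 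For (5), apply Proposition \ref{lnx}(2) with $\Gamma:=\min\V(M)$ to get the exact sequence of Cohen-Macaulay modules.

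For (6), the specialization-closedness is handled exactly as in Lemma \ref{nv}(6); what is new is the containment in $\Sing R$. If $\p\notin\Sing R$, then $R_\p$ is regular, so every $R_\p$-module has finite projective dimension. For $X\in\X\subseteq\CM(R)$, item (4) above says $X_\p\in\CM(R_\p)$, and the Auslander-Buchsbaum formula gives $\pd_{R_\p}X_\p=\depth R_\p-\depth_{R_\p}X_\p=0$, so $X_\p$ is free and $\p\notin\V(\X)$. For (7), standard homological algebra over a Cohen-Macaulay local ring shows $\syz^d(R/\p)\in\CM(R)$. To check $\V(\syz^d(R/\p))\subseteq\Phi$, take $\q\notin\Phi$; since $\Phi$ is specialization-closed and $\p\in\Phi$, we must have $\p\not\subseteq\q$, so $(R/\p)_\q=0$. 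The minimal free resolution of $R/\p$ localizes to a bounded-below exact complex of free $R_\q$-modules, which is split exact, whence $\syz^d(R/\p)_\q$ is a direct summand of a free module and is therefore free. Item (8) then follows by intersecting the resolving subcategory $\V^{-1}(\Phi)$ (given by Lemma \ref{nv}(8)) with the resolving subcategory $\CM(R)$ of $\mod R$.

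The step I expect to be the main obstacle is (2), because it depends on the Cohen-Macaulay analogue of the ``building up from the residue field'' theorem \cite[Theorem 2.4]{stcm}: one must be confident that the construction producing $M$ from $k$ can be carried out entirely through Cohen-Macaulay modules once one starts from $\syz^d k$ in place of $k$. A secondary subtlety is the argument in (7) that the localized minimal free resolution of $R/\p$ at a prime not containing $\p$ is split exact rather than merely acyclic; this is routine but is the point where specialization-closedness of $\Phi$ is really used.
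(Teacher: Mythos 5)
Your proposal is correct and follows essentially the same route as the paper: the paper's proof simply says to replace \cite[Theorem 2.4]{stcm}/Proposition~\ref{lnx}(1) by \cite[Corollary 2.6]{stcm}/Proposition~\ref{lnx}(2) for items (2) and (5), derives (7) from Lemma~\ref{nv}(7)(8), and derives (8) from Lemma~\ref{nv}(8) together with \cite[Example 1.6(3)]{stcm} (the fact that $\CM(R)$ is itself a resolving subcategory), which is precisely what you outline. The only stylistic difference is in (7), where you verify $\V(\syz^d(R/\p))\subseteq\Phi$ directly from scratch, whereas the paper gets it for free from $R/\p\in\V^{-1}(\Phi)$ and closure of $\V^{-1}(\Phi)$ under syzygies; both arguments are correct and amount to the same thing.
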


\begin{proof}
(2)(5) In the proof of the corresponding statement in Lemma \ref{nv}, use \cite[Corollary 2.6]{stcm}/Proposition \ref{lnx}(2) instead of \cite[Theorem 2.4]{stcm}/Proposition \ref{lnx}(1).

(7) This follows from Lemma \ref{nv}(7)(8).

(8) Lemma \ref{nv}(8) and \cite[Example 1.6(3)]{stcm} imply this assertion.
\end{proof}

%%%%%%%%%%%%%%%%%%%%%%%%%%%%%%%%%%%%%%%%%%%%%%%%%%%%%%%%%%%%%%%%%%%%%%%%%%%%%%%
%%%%%%%%%%%%%%%%%%%%%%%%%%%%%%%%%%%%%%%%%%%%%%%%%%%%%%%%%%%%%%%%%%%%%%%%%%%%%%%
\section{A local condition}\label{sec3}
%%%%%%%%%%%%%%%%%%%%%%%%%%%%%%%%%%%%%%%%%%%%%%%%%%%%%%%%%%%%%%%%%%%%%%%%%%%%%%%
%%%%%%%%%%%%%%%%%%%%%%%%%%%%%%%%%%%%%%%%%%%%%%%%%%%%%%%%%%%%%%%%%%%%%%%%%%%%%%%

In this section, we give a classification theorem of resolving subcategories satisfying a certain local condition.
For a prime ideal $\p$ of $R$, we denote the residue field $R_\p/\p R_\p$ by $\kappa(\p)$.
The proposition below forms the essential part of the classification theorem.

\begin{prop}\label{resmodkey}
Let $R$ be a Cohen-Macaulay local ring.
Let $\X$ be a resolving subcategory of $\mod R$ and $M$ an $R$-module such that $\V_R(M)\subseteq\V_R(\X)$.
If $\kappa(\p)\in\add_{R_\p}\X_\p$ for each $\p\in\V_R(M)$, then $M$ is in $\X$.
\end{prop}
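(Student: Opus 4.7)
The plan is to induct on $\dim\V_R(M)$. The base case $\dim\V_R(M)=-\infty$, where $M$ is free, is handled directly by Lemma \ref{nv}(1): a resolving subcategory automatically contains every free module, so $M\in\X$.

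For the inductive step, assume $n:=\dim\V_R(M)\ge 0$ and that the proposition holds for all $R$-modules whose nonfree locus has dimension strictly less than $n$. The strategy is to set up and apply Lemma \ref{nv}(5); this requires showing that $M_\p\in\add_{R_\p}\X_\p$ for every $\p\in\min\V_R(M)$. Fix such a $\p$. By Lemma \ref{nv}(3), the localized nonfree locus $\V_{R_\p}(M_\p)=\{\p R_\p\}$ has dimension zero. Lemma \ref{nv}(4) ensures that $\add_{R_\p}\X_\p$ is a resolving subcategory of $\mod R_\p$, and by the standing hypothesis it contains the residue field $\kappa(\p)$ of the Cohen-Macaulay local ring $R_\p$. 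Applying Lemma \ref{nv}(2) over $R_\p$ therefore yields $M_\p\in\add_{R_\p}\X_\p$, as required.

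With the hypothesis of Lemma \ref{nv}(5) now verified, we obtain a short exact sequence
\[
0\to L\to N\to X\to 0
\]
with $X\in\X$, $M$ a direct summand of $N$, $\V_R(L)\subseteq\V_R(M)$, and $\dim\V_R(L)<\dim\V_R(M)$. To invoke the inductive hypothesis on $L$, note that $\V_R(L)\subseteq\V_R(M)\subseteq\V_R(\X)$ and that $\kappa(\p)\in\add_{R_\p}\X_\p$ for every $\p\in\V_R(L)\subseteq\V_R(M)$. Thus $L\in\X$ by induction, whence $N\in\X$ by closure under extensions, and finally $M\in\X$ by closure under direct summands.

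The principal technical point is the transfer of hypotheses through localization at the minimal members of $\V_R(M)$: one needs the residue field $\kappa(\p)$ of $R_\p$, together with the resolving structure of $\add_{R_\p}\X_\p$ and the vanishing $\dim\V_{R_\p}(M_\p)=0$, in order to drop from the global category to the local one. Once parts (2)--(5) of Lemma \ref{nv} are assembled in the right order, the induction on $\dim\V_R(M)$ runs without further obstruction.
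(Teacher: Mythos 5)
Your proof is correct and follows essentially the same route as the paper: induction on $\dim\V_R(M)$, localizing at minimal primes of the nonfree locus, verifying $M_\p\in\add_{R_\p}\X_\p$ via Lemma \ref{nv}(2)--(4), then invoking Lemma \ref{nv}(5) and the inductive hypothesis on $L$. The only cosmetic difference is that the paper treats $\dim\V_R(M)=0$ as a separate base case (and then cites ``the basis of the induction'' when localizing), whereas you fold it into the inductive step by applying Lemma \ref{nv}(2) directly over $R_\p$ --- logically the same move.
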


\begin{proof}
The assertion is proved by induction on $\dim\V_R(M)$.
When $\dim\V_R(M)=-\infty$, Lemma \ref{nv}(1) shows $M\in\X$.
When $\dim\V_R(M)=0$, we have $\V_R(M)=\{\m\}$.
By assumption, $\kappa(\m)\in\add_{R_\m}\X_\m$, which means $k\in\X$.
Lemma \ref{nv}(2) implies $M\in\X$.

Let us consider the case where $\dim\V_R(M)\ge 1$.
Take $\p\in\min\V_R(M)$.
We have $\dim\V_{R_\p}(M_\p)=0<\dim\V_R(M)$ by Lemma \ref{nv}(3), and $\add_{R_\p}\X_\p$ is a resolving subcategory of $\mod R_\p$ by Lemma \ref{nv}(4).
We see that $\kappa(\q)\in\add_{(R_\p)_\q}(\add_{R_\p}\X_\p)_\q$ for every $\q\in\V_{R_\p}(M_\p)$.
We have $\V_{R_\p}(M_\p)=\{\p R_\p\}\subseteq\V_{R_\p}(\add_{R_\p}\X_\p)$ by Lemma \ref{nv}(3).
The basis of the induction shows $M_\p\in\add_{R_\p}\X_\p$ for all $\p\in\min\V_R(M)$.
By Lemma \ref{nv}(5) there exists an exact sequence $0 \to L \to N \to X \to 0$ with $X\in\X$, $\V_R(L)\subseteq\V_R(M)$ and $\dim\V_R(L)<\dim\V_R(M)$ such that $M$ is a direct summand of $N$.
We have $\kappa(\p)\in\add_{R_\p}\X_\p$ for all $\p\in\V_R(L)$, and $\V_R(L)\subseteq\V_R(\X)$ holds.
The induction hypothesis shows $L\in\X$.
From the above exact sequence we see that $N\in\X$, and $M\in\X$.
\end{proof}

Proposition \ref{resmodkey} does not necessarily hold if the condition that $\kappa(\p)$ belongs to $\add_{R_\p}\X_\p$ for all $\p\in\V_R(M)$ is replaced with the weaker condition that $k$ belongs to $\X$.

\begin{ex}
Let $R=k[[x,y,z]]$ be a formal power series ring over a field $k$.
Then the maximal ideal of $R$ is $\m=(x,y,z)$, and $\p=(x,y)$, $\q=(x)$ are prime ideals of $R$.
Let $\X$ be the smallest resolving subcategory of $\mod R$ containing $k$ and $R/\q$.
Then $k$ belongs to $\X$ and $\V_R(R/\p)$ is contained in $\V_R(\X)$, but $R/\p$ does not belong to $\X$.

Indeed, we have $\V_R(R/\p)=\{\p,\m\}$.
Since $\p,\m$ are in $\V_R(R/\q)$, they belong to $\V_R(\X)$.
Hence $\V_R(R/\p)$ is contained in $\V_R(\X)$.
By \cite[Proposition 1.10(2)]{stcm}, every module $X\in\X$ satisfies $\Ass_RX\subseteq\Ass_R(k\oplus R/\q)\cup\Ass_RR=\{\m,\q,0\}$.
(Here the set of associated prime ideals of an $R$-module $M$ is denoted by $\Ass_RM$.)
It follows from this that $R/\p\notin\X$.
\end{ex}

Now, let us prove the main result of this section.
Note that in the one-to-one correspondence the smallest resolving subcategory $\add R$ corresponds to the empty set.

\begin{thm}\label{resmod}
Let $R$ be a Cohen-Macaulay local ring.
Then one has the following one-to-one correspondence:
$$
\begin{CD}
\left\{
\begin{matrix}
\text{Resolving subcategories $\X$ of $\mod R$}\\
\text{with $\kappa(\p)\in\add_{R_\p}\X_\p$ for all $\p\in\V(\X)$}
\end{matrix}
\right\}
\begin{matrix}
@>{\V}>>\\
@<<{\V^{-1}}<
\end{matrix}
\left\{
\begin{matrix}
\text{Specialization-closed subsets}\\
\text{of $\Spec R$ contained in $\s(R)$}
\end{matrix}
\right\}.
\end{CD}
$$
\end{thm}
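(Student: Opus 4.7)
The plan is to verify that $\V$ and $\V^{-1}$ are well-defined on the indicated collections and are mutually inverse, with the substantive content already contained in Proposition \ref{resmodkey}. Most of the argument is formal bookkeeping using the properties recorded in Lemma \ref{nv}.

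First I would check well-definedness. For any resolving subcategory $\X$ (no local hypothesis needed), Lemma \ref{nv}(6) immediately yields that $\V(\X)$ is specialization-closed and contained in $\s(R)$, so $\V$ sends the left-hand class to the right-hand class. In the opposite direction, given a specialization-closed $\Phi\subseteq\s(R)$, Lemma \ref{nv}(8) says $\V^{-1}(\Phi)$ is a resolving subcategory, and it remains to verify the local hypothesis. For any $\p\in\V(\V^{-1}(\Phi))$ we have $\p\in\Phi$ by Lemma \ref{nv}(10); then Lemma \ref{nv}(7) gives $R/\p\in\V^{-1}(\Phi)$, so $\kappa(\p)=(R/\p)_\p\in(\V^{-1}(\Phi))_\p\subseteq\add_{R_\p}(\V^{-1}(\Phi))_\p$, as required.

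Next I would establish the two composition identities. The identity $\V\circ\V^{-1}=\id$ on specialization-closed subsets of $\s(R)$ is exactly Lemma \ref{nv}(10). For $\V^{-1}\circ\V=\id$ on the left-hand class, one inclusion $\X\subseteq\V^{-1}(\V(\X))$ is Lemma \ref{nv}(9); the converse is the only nontrivial point. Given $M\in\V^{-1}(\V(\X))$ we have $\V(M)\subseteq\V(\X)$, and the local hypothesis on $\X$ guarantees $\kappa(\p)\in\add_{R_\p}\X_\p$ for every $\p\in\V(M)$. Proposition \ref{resmodkey} then applies directly to force $M\in\X$.

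The main obstacle — moving from the local conditions on $\V(M)$ to a global membership $M\in\X$ — is precisely what Proposition \ref{resmodkey} addresses, via induction on $\dim\V(M)$ using Lemma \ref{nv}(5) to reduce dimension. Once that proposition is available, the theorem reduces to assembling the pieces above, and the observation that $\V(\add R)=\emptyset$ together with $\V^{-1}(\emptyset)=\add R$ accounts for the parenthetical remark about the smallest resolving subcategory corresponding to the empty set.
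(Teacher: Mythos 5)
Your argument matches the paper's proof exactly: well-definedness of $\V$ and $\V^{-1}$ from Lemma \ref{nv}(6)(7)(8)(10), and mutual inversion from Lemma \ref{nv}(9)(10) together with Proposition \ref{resmodkey}. You have merely written out the same steps in slightly more detail, so there is nothing to change.
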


\begin{proof}
If $\Phi$ is a specialization-closed subset of $\Spec R$ contained in $\s(R)$, then $\kappa(\p)=(R/\p)_\p$ belongs to $\add_{R_\p}(\V^{-1}(\Phi))_\p$ for every $\p\in\Phi=\V(\V^{-1}(\Phi))$ by Lemma \ref{nv}(7)(10).
Hence, it follows from Lemma \ref{nv}(6)(8) that the maps $\V$ and $\V^{-1}$ are well-defined.
By Lemma \ref{nv}(9)(10) and Proposition \ref{resmodkey}, the two maps are mutually inverse bijections.
\end{proof}

%%%%%%%%%%%%%%%%%%%%%%%%%%%%%%%%%%%%%%%%%%%%%%%%%%%%%%%%%%%%%%%%%%%%%%%%%%%%%%%
%%%%%%%%%%%%%%%%%%%%%%%%%%%%%%%%%%%%%%%%%%%%%%%%%%%%%%%%%%%%%%%%%%%%%%%%%%%%%%%
\section{Tensor products and transposes}\label{sec4}
%%%%%%%%%%%%%%%%%%%%%%%%%%%%%%%%%%%%%%%%%%%%%%%%%%%%%%%%%%%%%%%%%%%%%%%%%%%%%%%
%%%%%%%%%%%%%%%%%%%%%%%%%%%%%%%%%%%%%%%%%%%%%%%%%%%%%%%%%%%%%%%%%%%%%%%%%%%%%%%

In this section we provide an alternative characterization of the resolving subcategories arising in Theorem \ref{resmod}.
We start by introducing the transpose of a module with respect to a fixed module.

\begin{dfn}\label{deftrx}
Let $M$ be an $R$-module.
\begin{enumerate}[(1)]
\item
Let $\sigma: P_1 \to P_0 \to M \to 0$ be an exact sequence of $R$-modules such that $P_0$, $P_1$ are free.
Then we call $\sigma$ a {\em free presentation} of $M$.
If $\sigma$ can be extended to a minimal free resolution of $M$, then we call $\sigma$ a {\em minimal free presentation} of $M$.
\item
Let $X$ be an $R$-module.
Take a minimal free presentation $F_1 \overset{\partial}{\to} F_0 \to M \to 0$ of $M$.
Then we define the {\em transpose} of $M$ {\em with respect to} $X$ as the cokernel of the dual map $\Hom_R(\partial,X):\Hom_R(F_0,X)\to\Hom_R(F_1,X)$, and denote it by $\tr_XM$.
\end{enumerate}
\end{dfn}

\begin{rem}
With the notation of Definition \ref{deftrx}, one has:
\begin{enumerate}[(1)]
\item
The module $\tr_XM$ is uniquely determined up to isomorphism since so is a minimal free resolution of $M$.
\item
There is an exact sequence
$$
\begin{CD}
0 \to \Hom_R(M,X) \to \Hom_R(F_0,X) @>{\Hom_R(\partial,X)}>> \Hom_R(F_1,X) \to \tr_XM \to 0.
\end{CD}
$$
\item
The transpose $\tr_RM$ with respect to $R$ is nothing but the usual {\em (Auslander) transpose} of $M$.
Recall that $M$ is isomorphic to $\tr_R(\tr_RM)$ up to free summand.
\end{enumerate}
\end{rem}

Here are some basic properties of transposes with respect to a fixed module, which will be necessary later.

\begin{prop}\label{trx}
Let $X$ be an $R$-module.
\begin{enumerate}[\rm (1)]
\item
For an $R$-module $M$ one has an isomorphism $\tr_XM\cong X\otimes_R\tr_RM$.
\item
Let $P_1\overset{\partial}{\to} P_0 \to M \to 0$ be a free presentation of an $R$-module $M$.
Then there is an exact sequence
$$
\begin{CD}
0 \to \Hom_R(M,X) \to \Hom_R(P_0,X) @>{\Hom_R(\partial,X)}>> \Hom_R(P_1,X) \to \tr_XM\oplus X^{\oplus n} \to 0.
\end{CD}
$$
\item
Let $0 \to L \overset{f}{\to} M \overset{g}{\to} N \to 0$ be an exact sequence of $R$-modules.
Then there is an exact sequence
$$
\begin{CD}
0 @>>> \Hom_R(N,X) @>{\Hom_R(g,X)}>> \Hom_R(M,X) @>{\Hom_R(f,X)}>> \Hom_R(L,X) \\
@>>> \tr_XN @>>> \tr_XM\oplus X^{\oplus n} @>>> \tr_XL @>>> 0.
\end{CD}
$$
\end{enumerate}
\end{prop}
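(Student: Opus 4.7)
For part (1), I would take a minimal free presentation $F_1\xrightarrow{\partial}F_0\to M\to 0$ and dualize to $R$ to obtain the defining right exact sequence $F_0^{\ast}\xrightarrow{\partial^{\ast}}F_1^{\ast}\to\tr_RM\to 0$, where $(-)^{\ast}=\Hom_R(-,R)$. Applying the right exact functor $X\otimes_R-$ and using the canonical isomorphism $X\otimes_R\Hom_R(F_i,R)\cong\Hom_R(F_i,X)$ (valid because each $F_i$ is finitely generated free), the map $\id_X\otimes\partial^{\ast}$ gets identified with $\Hom_R(\partial,X)$. The cokernel is therefore simultaneously $X\otimes_R\tr_RM$ and $\tr_XM$, yielding (1). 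This part is essentially mechanical.

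For part (2), the plan is to compare a general free presentation $P_1\xrightarrow{\partial'}P_0\to M\to 0$ with the minimal one $F_1\xrightarrow{\partial}F_0\to M\to 0$. A standard Nakayama-type splitting argument (lift a minimal surjection to get $P_0\cong F_0\oplus Q$, then lift generators of the syzygy to split $P_1$) produces an isomorphism of two-term complexes
$$(P_1\xrightarrow{\partial'}P_0)\;\cong\;(F_1\xrightarrow{\partial}F_0)\,\oplus\,(Q\xrightarrow{\id}Q)\,\oplus\,(R^{\oplus n}\to 0)$$
for some finitely generated free $Q$ and integer $n\ge 0$. Applying $\Hom_R(-,X)$ respects this direct sum: the first summand contributes $\tr_XM$ to the cokernel by definition, the second contributes an isomorphism and so vanishes in both kernel and cokernel, and the third contributes $X^{\oplus n}$ to the cokernel only. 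Left-exactness of $\Hom_R(-,X)$ applied to $P_1\to P_0\to M\to 0$ identifies the leftmost kernel with $\Hom_R(M,X)$, and reassembling gives the required four-term exact sequence.

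For part (3), the strategy is to combine the horseshoe construction for free presentations with the long exact sequence of cohomology. Fix minimal free presentations $F^{L}_{\bullet}$ of $L$ and $F^{N}_{\bullet}$ of $N$. The horseshoe lemma produces a free presentation $F_1^{L}\oplus F_1^{N}\to F_0^{L}\oplus F_0^{N}\to M\to 0$ of $M$ (in general \emph{not} minimal), together with a termwise split short exact sequence of two-term complexes
$$0\to F^{L}_{\bullet}\to F^{L}_{\bullet}\oplus F^{N}_{\bullet}\to F^{N}_{\bullet}\to 0.$$
Applying the contravariant functor $\Hom_R(-,X)$ preserves exactness termwise, yielding a short exact sequence of cochain complexes whose cohomology long exact sequence supplies exactly the six-term sequence claimed: the three $H^{0}$'s are $\Hom_R(N,X)$, $\Hom_R(M,X)$, $\Hom_R(L,X)$, and the outer $H^{1}$'s are $\tr_XN$ and $\tr_XL$ by definition. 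Part (2) then enters crucially to identify the middle $H^{1}$ as $\tr_XM\oplus X^{\oplus n}$ rather than $\tr_XM$.

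The main obstacle is precisely this nonminimality issue in (3); the horseshoe presentation of $M$ is almost never minimal, and without the corrective factor $X^{\oplus n}$ the six-term sequence would fail to be exact at the middle transpose term. Part (2) is engineered to absorb exactly this discrepancy, so the chosen order — (1) to set up the tensor identification, then (2) to control arbitrary free presentations, then (3) to apply horseshoe — is essential.
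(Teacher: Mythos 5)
Your argument is correct and follows essentially the same route as the paper: part (1) via dualizing a minimal presentation and identifying $X\otimes_R\Hom_R(F_i,R)$ with $\Hom_R(F_i,X)$, part (3) via horseshoe plus $\Hom_R(-,X)$ plus the long exact (snake) sequence, using (2) to absorb the nonminimality. Your (2) differs only superficially — you decompose the two-term presentation directly by a Nakayama/splitting argument, whereas the paper extends to a full free resolution and cites uniqueness of minimal free resolutions (Eisenbud, Thm.\ 20.2) — but these are the same idea.
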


\begin{proof}
(1) Let $F_1 \to F_0 \to M \to 0$ be a minimal free presentation of $M$.
We have a commutative diagram
$$
\begin{CD}
X\otimes\Hom(F_0,R) @>>> X\otimes\Hom(F_1,R) @>>> X\otimes\tr M @>>> 0 \\
@V{f_0}VV @V{f_1}VV @V{g}VV \\
\Hom(F_0,X) @>>> \Hom(F_1,X) @>>> \tr_XM @>>> 0
\end{CD}
$$
with exact rows, where $f_i(x\otimes h)(y)=h(y)x$ for $x\in X$, $h\in\Hom(F_i,R)$ and $y\in F_i$.
Since $f_0$ and $f_1$ are isomorphisms, so is $g$.

(2) Extend the free presentation of $M$ to a free resolution of $M$ and use uniqueness of minimal free resolutions (cf. \cite[Theorem 20.2]{E}).
We can easily obtain an isomorphism $\Coker\Hom(\partial,X)\cong\tr_XM\oplus X^{\oplus n}$.

(3) First, take minimal free presentations of $L$ and $N$.
Second, apply the horseshoe lemma to them to make a commutative diagram.
Third, apply $\Hom(-,X)$ to the diagram.
Fourth, use (2) and the snake lemma.
Then we obtain a desired exact sequence.
\end{proof}

Let $\X$ be a subcategory of $\mod R$.
We say that $\X$ is {\em closed under tensor products} if for all $X,Y\in\X$ the tensor product $X\otimes_RY$ belongs to $\X$.
We say that $\X$ is {\em closed under transposes} if the transpose $\tr_RX$ is in $\X$ for every $X\in\X$.

We give two lemmas.

\begin{lem}\label{xkx}
Let $\X$ be a resolving subcategory of $\mod R$ which is closed under tensor products and transposes.
If $\X$ contains a nonfree $R$-module, then $k$ belongs to $\X$.
\end{lem}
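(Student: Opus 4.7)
The plan is to use the tensor and transpose closures to force $k$ into $\X$, via a careful combination of the exact sequences from Proposition~\ref{trx} with a tensor/extension argument.

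First I would replace $M$ by the complement of its maximal free direct summand: the result is still in $\X$ by closure under direct summands, still nonzero, and now has no free summand, so both $\mu_0(M)$ and $\mu_1(M)$ are strictly positive.

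Next, record several closure consequences. By hypothesis $\tr_R M\in\X$; dualizing a minimal free presentation of $M$ gives the four-term exact sequence
\[
0\to M^*\to F_0^*\to F_1^*\to \tr_R M\to 0,
\]
and splitting this into two short exact sequences and applying closure under kernels of epimorphisms yields $M^*\in\X$. More generally, for any $N\in\X$ the four-term exact sequence from Proposition~\ref{trx}(2) for a minimal presentation,
\[
0\to \Hom_R(M,N)\to N^{\mu_0(M)}\to N^{\mu_1(M)}\to N\otimes_R\tr_R M\to 0,
\]
combined with the fact that $N\otimes_R\tr_R M\in\X$ by tensor-closure, shows $\Hom_R(M,N)\in\X$; in particular $\End_R M\in\X$.

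The crucial step takes $N=M$ above. Splitting the sequence into
\[
0\to\End_R M\to M^{\mu_0(M)}\to K\to 0,\qquad 0\to K\to M^{\mu_1(M)}\to M\otimes_R\tr_R M\to 0,
\]
the second short exact sequence places $K$ in $\X$, and minimality of the presentation ($\partial(F_1)\subseteq\m F_0$) forces $K\subseteq\m M^{\mu_1(M)}$. Hence $M\otimes_R\tr_R M$ surjects onto $k^{\mu_0(M)\mu_1(M)}$.

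The last step---converting this surjective-quotient statement into the direct-summand statement ``$k\in\X$''---is the main obstacle. A natural approach is to pick $y\in\m$ not in $\operatorname{Ann}_R(M)$ and use the resulting nonzero map $\syz_R M\to M$ to build a nonsplit extension $0\to M\to E\to M\to 0$ in $\Ext^1_R(M,M)$; then $E\in\X$ by extension-closure, and one aims to show that $E\otimes_R M\in\X$ admits $k$ as a direct summand, from which $k\in\X$ would follow by closure under direct summands. Making this extraction of $k$ precise in the required generality, without any additional hypothesis on $M$ such as cyclicity or finite projective dimension, is where the essential technical work of the proof lies.
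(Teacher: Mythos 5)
Your argument stalls exactly where the real content of the lemma lies, and the part you do prove cannot be pushed through to the conclusion. Showing that $M\otimes_R\tr_RM$ surjects onto $k^{\mu_0(M)\mu_1(M)}$ carries no information: every nonzero finitely generated module surjects onto $k$ (kill a maximal submodule), and resolving subcategories are closed under kernels of epimorphisms, not under quotients, so realizing $k$ as a quotient of an object of $\X$ proves nothing. The sketch you offer for the missing step --- extracting $k$ as a \emph{direct summand} of $E\otimes_RM$ for a suitably chosen nonsplit extension $E$ --- is not substantiated and is not how one should expect to succeed: there is no reason for $k$ to split off such a tensor product for a general nonfree $M$, and the paper never produces $k$ as a direct summand at all.

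What the paper does instead is arrange for $k$ to appear at the \emph{left} end of a four-term transpose sequence, i.e.\ as a kernel, so that the resolving axioms apply. Two ideas are needed that your proposal lacks. First, one cannot work with an arbitrary nonfree $M\in\X$: by \cite[Theorem A]{res} one may replace it by a nonfree $X\in\X$ that is locally free on the punctured spectrum, so that $\Ext_R^1(X,\syz X)$ is nonzero of finite length and hence has a socle element $\sigma$ with $\m\sigma=0$. Second, taking the extension $0\to\syz X\to Y\to X\to0$ classified by $\sigma$ and applying Proposition \ref{trx}(3) with respect to $\syz X$, the cokernel of $\Hom(Y,\syz X)\to\Hom(\syz X,\syz X)$ is exactly $R\sigma\cong k$, which yields an exact sequence $0\to k\to\tr_{\syz X}X\to\tr_{\syz X}Y\oplus(\syz X)^{\oplus n}\to\tr_{\syz X}\syz X\to0$ in which, by Proposition \ref{trx}(1) and closure under tensor products and transposes, all terms other than $k$ lie in $\X$; splitting this into two short exact sequences and using closure under kernels of epimorphisms gives $k\in\X$. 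Your preliminary observations (e.g.\ $\Hom_R(M,N)\in\X$ for $N\in\X$) are correct and in the spirit of Proposition \ref{trx}(2), but without the reduction to the punctured-spectrum case and the socle-extension trick the proof is not there.
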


\begin{proof}
By virtue of \cite[Theorem A]{res}, there exists a nonfree $R$-module $X\in\X$ which is locally free on the punctured spectrum of $R$.
Since $\Ext^1(X,\syz X)$ is a nonzero $R$-module of finite length by \cite[Corollary 2.11]{res}, its socle is nonzero, and we can choose an element $0\ne\sigma\in\Ext^1(X,\syz X)$ which is annihilated by the maximal ideal $\m$.
Let $0 \to \syz X \to Y \to X \to 0$ be an exact sequence of $R$-modules which corresponds to $\sigma$.
Note that $Y$ is in $\X$.
Using Proposition \ref{trx}(3), we get an exact sequence
$$
\begin{CD}
0 @>>> \Hom(X,\syz X) @>>> \Hom(Y,\syz X) @>{f}>> \Hom(\syz X,\syz X) \\
@>>> \tr_{\syz X}X @>>> \tr_{\syz X}Y\oplus(\syz X)^{\oplus n} @>>> \tr_{\syz X}\syz X @>>> 0.
\end{CD}
$$
There is an exact sequence
$$
\Hom(Y,\syz X) \overset{f}{\to} \Hom(\syz X,\syz X) \overset{g}{\to} \Ext^1(X,\syz X),
$$
where $g$ sends the identity map $\id_{\syz X}\in\Hom(\syz X,\syz X)$ to $\sigma\in\Ext^1(X,\syz X)$.
Hence the cokernel of $f$ is isomorphic to the $R$-submodule $R\sigma$ of $\Ext^1(X,\syz X)$, which is isomorphic to $k$.
Therefore we obtain an exact sequence
$$
0 \to k \to \tr_{\syz X}X \to \tr_{\syz X}Y\oplus(\syz X)^{\oplus n} \to \tr_{\syz X}\syz X \to 0.
$$
Proposition \ref{trx}(1) shows that $\tr_{\syz X}X$ is isomorphic to $\syz X\otimes\tr X$, which belongs to $\X$ since $\X$ is closed under tensor products and transposes.
Hence $\tr_{\syz X}X$ belongs to $\X$.
Similarly, $\tr_{\syz X}Y$ and $\tr_{\syz X}\syz X$ are also in $\X$.
Decomposing the above exact sequence into two short exact sequences, we observe that $k$ is in $\X$.
\end{proof}

\begin{lem}\label{imm}
Let $M$ and $N$ be $R$-modules.
Then the following hold.
\begin{enumerate}[\rm (1)]
\item
One has $\V_R(M\otimes_RN)\subseteq\V_R(M)\cup\V_R(N)\supseteq\V_R(\Hom_R(M,N))$.
\item
One has $\V_R(M)=\V_R(\tr_RM)$.
\end{enumerate}
\end{lem}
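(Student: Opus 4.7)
The plan is to prove both parts by localizing at a prime $\p$ outside the relevant nonfree loci and exploiting the fact that, over the local ring $R_\p$, finitely generated projective modules are free of finite rank.

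For part (1), I would suppose $\p\notin\V_R(M)\cup\V_R(N)$, so that $M_\p$ and $N_\p$ are both finitely generated free $R_\p$-modules. Then $(M\otimes_RN)_\p\cong M_\p\otimes_{R_\p}N_\p$ is free. For the Hom statement, since $M$ is finitely presented and $R$ is Noetherian, localization commutes with Hom, giving $\Hom_R(M,N)_\p\cong\Hom_{R_\p}(M_\p,N_\p)$, which is again free as a Hom between finitely generated free modules. This shows $\p\notin\V_R(M\otimes_RN)$ and $\p\notin\V_R(\Hom_R(M,N))$, yielding both inclusions. This part should be routine.

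For part (2), the key observation is the symmetry $\tr_R\tr_RM\cong M$ up to free summand, which means it suffices to prove a single implication: if $M_\p$ is free, then $(\tr_RM)_\p$ is free; applying the same implication to $\tr_RM$ then yields the reverse inclusion. To prove this implication, take a minimal free presentation $F_1\overset{\partial}{\to}F_0\to M\to 0$ and localize at $\p$. Since $M_\p$ is free, the localized presentation splits: we may write $(F_0)_\p\cong M_\p\oplus K$, where $K=\mathrm{im}(\partial_\p)$ is a direct summand of $(F_0)_\p$ and hence free, and $(F_1)_\p\cong K\oplus L$ with $L=\Ker(\partial_\p)$ free for the same reason. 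Under these decompositions, $\partial_\p$ is the composition of the projection onto $K$ and the inclusion into $(F_0)_\p$.

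Dualizing over $R_\p$, the map $\Hom_{R_\p}((F_0)_\p,R_\p)\to\Hom_{R_\p}((F_1)_\p,R_\p)$ becomes the map $M_\p^\ast\oplus K^\ast\to K^\ast\oplus L^\ast$ sending $(m^\ast,k^\ast)\mapsto(k^\ast,0)$, whose cokernel is $L^\ast$. Since localization commutes with cokernels, $(\tr_RM)_\p\cong L^\ast$, which is free because $L$ is free of finite rank. The main subtlety to watch for is keeping track of free summands carefully (the presentation need not remain minimal after localization), but the splitting argument bypasses this since we only need freeness of the localized transpose, not minimality. Invoking $\tr_R\tr_RM\cong M$ modulo free summands then closes the loop and gives the equality $\V_R(M)=\V_R(\tr_RM)$.
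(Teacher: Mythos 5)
Your proof is correct and takes essentially the same route as the paper: for (1) localize at a prime $\p$ outside $\V_R(M)\cup\V_R(N)$ and use that tensor products and Hom's of finite free modules are free (and commute with localization), and for (2) use that the transpose commutes with localization up to free summands together with the symmetry $\tr_R\tr_RM\cong M$ up to free summands. The paper merely cites these facts in one line, whereas you verify the localization step explicitly via the split localized presentation, but the underlying argument is the same.
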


\begin{proof}
Let $\p$ be a prime ideal of $R$.
If $M_\p$ and $N_\p$ are $R_\p$-free, then so are $M_\p\otimes_{R_\p}N_\p$ and $\Hom_{R_\p}(M_\p,N_\p)$, which are isomorphic to $(M\otimes_RN)_\p$ and $\Hom_R(M,N)_\p$, respectively.
Also, the $R_\p$-module $M_\p$ is free if and only if so is $\tr_{R_\p}M_\p$, which is isomorphic to $(\tr_RM)_\p$ up to free summand.
The assertions of the lemma immediately follow from these.
\end{proof}

Now we can clarify the meaning of the local condition given in the previous section.

\begin{prop}\label{vmx}
Let $R$ be a Cohen-Macaulay local ring.
Let $\X$ be a resolving subcategory of $\mod R$.
The following are equivalent:
\begin{enumerate}[\rm (1)]
\item
$\X$ is closed under tensor products and transposes;
\item
$\X$ is closed under $\tr_X(-)$ for all $X\in\X$;
\item
$R/\p$ belongs to $\X$ for every $\p\in\V(\X)$;
\item
$\kappa(\p)$ belongs to $\add_{R_\p}\X_\p$ for every $\p\in\V(\X)$.
\end{enumerate}
\end{prop}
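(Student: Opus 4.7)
The plan is to prove the cycle $(1)\Rightarrow(4)\Rightarrow(3)\Rightarrow(1)$, and separately the equivalence $(1)\Leftrightarrow(2)$, so that all four conditions collapse to one another.

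For $(1)\Leftrightarrow(2)$, the engine is Proposition \ref{trx}(1), which gives $\tr_XM\cong X\otimes\tr M$. Assuming (1), both $\tr M$ and then $X\otimes\tr M$ lie in $\X$ whenever $X,M\in\X$, giving (2). Conversely, taking $X=R$ in (2) yields closure under ordinary transpose. To recover closure under tensor products, given $X,Y\in\X$, first note $\tr Y\in\X$, then apply (2) to get $\tr_X(\tr Y)\cong X\otimes\tr(\tr Y)\in\X$; since $\tr\tr Y\cong Y$ up to free summand, this isomorphism reads $(X\otimes Y)\oplus X^{\oplus n}\in\X$, so $X\otimes Y\in\X$ by closure under direct summands. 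The implication $(3)\Rightarrow(4)$ is immediate from $(R/\p)_\p=\kappa(\p)$, and $(4)\Rightarrow(3)$ follows by applying Proposition \ref{resmodkey} to $M=R/\p$: the nonfree locus of $R/\p$ is contained in $V(\p)$, which lies in $\V(\X)$ by Lemma \ref{nv}(6), and at each such $\q$ the local hypothesis of Proposition \ref{resmodkey} is just (4).

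For $(3)\Rightarrow(1)$, take $M,N\in\X$. Lemma \ref{imm} gives $\V(M\otimes N)\subseteq\V(M)\cup\V(N)\subseteq\V(\X)$ and $\V(\tr M)=\V(M)\subseteq\V(\X)$. Condition (3) furnishes $R/\p\in\X$ for every $\p\in\V(\X)$, hence $\kappa(\p)=(R/\p)_\p\in\add_{R_\p}\X_\p$. Proposition \ref{resmodkey} then places $M\otimes N$ and $\tr M$ in $\X$, establishing (1).

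The real work is $(1)\Rightarrow(4)$, and this is the step I expect to be the main obstacle because it requires both transferring the tensor/transpose closure to a localization and then invoking the nontrivial Lemma \ref{xkx}. Fix $\p\in\V(\X)$ and set $\Y:=\add_{R_\p}\X_\p$, which is a resolving subcategory of $\mod R_\p$ by Lemma \ref{nv}(4). I would verify that $\Y$ inherits closure under tensor products and transposes: if $U'$ and $V'$ are summands of $X_\p$ and $Y_\p$ with $X,Y\in\X$, then $U'\otimes_{R_\p}V'$ is a summand of $(X\otimes_RY)_\p$, and the image of $U'$ under $\tr_{R_\p}$ is, up to a free summand, a summand of $(\tr_RX)_\p$; closure under summands in $\Y$ then does the rest. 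Since $\p\in\V(\X)$, some $M\in\X$ has $M_\p$ nonfree, so $\Y$ contains a nonfree $R_\p$-module. Lemma \ref{xkx} applied over the local ring $R_\p$ now yields that its residue field $\kappa(\p)$ lies in $\Y=\add_{R_\p}\X_\p$, which is exactly (4).
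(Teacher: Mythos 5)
Your proposal is correct and takes essentially the same route as the paper: $(1)\Leftrightarrow(2)$ via Proposition \ref{trx}(1), $(1)\Rightarrow(4)$ by transferring closure under tensor products and transposes to $\add_{R_\p}\X_\p$ and invoking Lemma \ref{nv}(4) and Lemma \ref{xkx}, and the remaining implications resting on Proposition \ref{resmodkey} together with Lemmas \ref{nv}(6) and \ref{imm}. The only difference is organizational: you apply Proposition \ref{resmodkey} directly to $R/\p$, $M\otimes_RN$ and $\tr_RM$, whereas the paper packages the same engine as Theorem \ref{resmod} (the identity $\X=\V^{-1}(\V(\X))$) to get $(4)\Rightarrow(3)$ and $(4)\Rightarrow(1)$.
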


\begin{proof}
(1) $\Rightarrow$ (2):
This is easily verified by using Proposition \ref{trx}(1).

(2) $\Rightarrow$ (1):
Closure under transposes holds as $R\in\X$.
Let $M,N\in\X$.
Note that $N\cong\tr\tr N\oplus R^{\oplus n}$ for some $n\ge0$.
We have $M\otimes N\cong M\otimes(\tr\tr N\oplus R^{\oplus n})\cong \tr_M(\tr N)\oplus M^{\oplus n}$ by Proposition \ref{trx}(1).
Hence $M\otimes N$ belongs to $\X$.

(3) $\Rightarrow$ (4): Localization at $\p$ shows this implication.

(4) $\Rightarrow$ (3): Theorem \ref{resmod} implies that the set $\Phi:=\V(\X)$ is a specialization-closed subset of $\Spec R$ contained in $\s(R)$, and that the equality $\X=\V^{-1}(\Phi)$ holds.
Let $\p$ be a prime ideal in $\V(\X)=\Phi$.
Then Lemma \ref{nv}(7) implies that $R/\p$ belongs to $\V^{-1}(\Phi)=\X$.

(4) $\Rightarrow$ (1):
It follows from Theorem \ref{resmod} that we have $\X=\V^{-1}(\Phi)$ for some set $\Phi$ of prime ideals.
Lemma \ref{imm} implies that $\X$ is closed under tensor products and transposes.

(1) $\Rightarrow$ (4):
Let $\p$ be a prime ideal in $\V(\X)$.
Then $\add_{R_\p}\X_\p$ contains a nonfree $R_\p$-module.
By Lemmas \ref{nv}(4) and \ref{xkx}, it suffices to show that $\add_{R_\p}\X_\p$ is closed under tensor products and transposes as a subcategory of $\mod R_\p$.
Let $M$ and $N$ be $R_\p$-modules in $\add_{R_\p}\X_\p$.
Then there exist $R$-modules $X,Y\in\X$ such that $M$ and $N$ are isomorphic to direct summands of $X_\p$ and $Y_\p$, respectively.
Hence the $R_\p$-module $M\otimes_{R_\p}N$ is isomorphic to a direct summand of $X_\p\otimes_{R_\p}Y_\p$.
Since $X_\p\otimes_{R_\p}Y_\p\cong(X\otimes_RY)_\p$ and $X\otimes_RY\in\X$, the module $M\otimes_{R_\p}N$ belongs to $\add_{R_\p}\X_\p$.
On the other hand, $\tr_{R_\p}M$ is isomorphic to a direct summand of $\tr_{R_\p}X_\p$, which is isomorphic to $(\tr_RX)_\p$ up to free summand.
As $\tr_RX$ is in $\X$, we have that $\tr_{R_\p}M$ is in $\add_{R_\p}\X_\p$.
Consequently, $\add_{R_\p}\X_\p$ is closed under tensor products and transposes.
\end{proof}

Proposition \ref{vmx} and Theorem \ref{resmod} yield a complete classification of the resolving subcategories closed under tensor products and transposes.

\begin{thm}\label{tentr}
Let $R$ be a Cohen-Macaulay local ring.
Then one has the following one-to-one correspondence:
$$
\begin{CD}
\left\{
\begin{matrix}
\text{Resolving subcategories of $\mod R$ closed}\\
\text{under tensor products and transposes}
\end{matrix}
\right\}
\begin{matrix}
@>{\V}>>\\
@<<{\V^{-1}}<
\end{matrix}
\left\{
\begin{matrix}
\text{Specialization-closed subsets}\\
\text{of $\Spec R$ contained in $\s(R)$}
\end{matrix}
\right\}.
\end{CD}
$$
\end{thm}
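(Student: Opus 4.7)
The plan is to derive Theorem \ref{tentr} as a direct consequence of Theorem \ref{resmod} combined with Proposition \ref{vmx}. Theorem \ref{resmod} already provides a bijection between specialization-closed subsets of $\Spec R$ contained in $\s(R)$ and resolving subcategories $\X$ of $\mod R$ satisfying the local condition that $\kappa(\p)\in\add_{R_\p}\X_\p$ for every $\p\in\V(\X)$. Thus the remaining task is to identify this family of resolving subcategories with those that are closed under tensor products and transposes.

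The first step is to invoke the equivalence (1)$\Leftrightarrow$(4) from Proposition \ref{vmx}: for a resolving subcategory $\X$ of $\mod R$, being closed under tensor products and transposes is the same as the local condition $\kappa(\p)\in\add_{R_\p}\X_\p$ for all $\p\in\V(\X)$. Hence the class of resolving subcategories appearing in the left-hand side of Theorem \ref{tentr} coincides with the class appearing in the left-hand side of Theorem \ref{resmod}.

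The second step is to check that the two maps in the statement are well-defined. For the forward direction, $\V(\X)$ is a specialization-closed subset of $\Spec R$ contained in $\s(R)$ by Lemma \ref{nv}(6). For the backward direction, given such a $\Phi$, the subcategory $\V^{-1}(\Phi)$ is resolving by Lemma \ref{nv}(8); moreover, Lemma \ref{imm} shows directly that $\V^{-1}(\Phi)$ is closed under tensor products and transposes, since the nonfree loci of $M\otimes_R N$ and of $\tr_R M$ are contained in $\V_R(M)\cup\V_R(N)$ and $\V_R(M)$ respectively.

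The third step is simply to observe that under this identification the two maps $\V$ and $\V^{-1}$ are mutually inverse, which is precisely the content of Theorem \ref{resmod}. There is no real obstacle at this stage: all the substantive work has been done in Proposition \ref{resmodkey}, Proposition \ref{vmx}, and the preparatory lemmas. The only thing to verify with care is that no subcategory is lost or gained in translating between the two formulations, but this is guaranteed by the logical equivalence in Proposition \ref{vmx}.
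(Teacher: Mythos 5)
Your proposal is correct and follows the same route as the paper: Theorem \ref{tentr} is obtained by combining the bijection of Theorem \ref{resmod} with the equivalence (1)$\Leftrightarrow$(4) of Proposition \ref{vmx}, which identifies closure under tensor products and transposes with the local condition on $\kappa(\p)$. Your additional well-definedness checks via Lemma \ref{nv}(6)(8) and Lemma \ref{imm} are consistent with the paper's preparatory results and raise no issues.
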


\begin{ques}
Let $R$ be a Cohen-Macaulay local ring.
Let $\X$ be a resolving subcategory of $\mod R$ closed under tensor products.
Then is $\X$ closed under transposes?
\end{ques}

Let $R$ be a Cohen-Macaulay local ring.
Recall that $R$ is said to be {\em generically Gorenstein} if $R_\p$ is a Gorenstein local ring for every $\p\in\Min R$.
(Here $\Min R$ denotes the set of minimal prime ideals of $R$.)
Also, we recall that an $R$-module $M$ is said to be {\em generically free} if $M_\p$ is a free $R_\p$-module for every $\p\in\Min R$.

\begin{ex}\label{gengf}
Let $R$ be a generically Gorenstein, Cohen-Macaulay local ring.
We consider the set
$$
\Phi=\s(R)\setminus\Min R=\Spec R\setminus\Min R
$$
of prime ideals of $R$.
It is straightforward that $\Phi$ is a specialization-closed subset of $\Spec R$ contained in $\s(R)$.
We easily observe that $\V^{-1}(\Phi)$ consists of all generically free $R$-modules.
By Theorem \ref{tentr}, the generically free $R$-modules form a resolving subcategory $\X$ of $\mod R$ which is closed under tensor products and transposes.
\end{ex}

In the rest of this section, we consider classifying thick subcategories of $\CM(R)$ containing $R$, which are special resolving subcategories of $\mod R$.
We say that a subcategory $\X$ of $\CM(R)$ (containing $R$) is {\em closed under $\syz^d\Hom$} if $\syz^d\Hom(X,Y)$ belongs to $\X$ for all $X,Y\in\X$.
A similar result to Lemma \ref{xkx} holds.

\begin{lem}\label{syzd}
Let $R$ be a Cohen-Macaulay local ring of dimension $d$.
Let $\X$ be a thick subcategory of $\CM(R)$ containing $R$ and closed under $\syz^d\Hom$.
If $\X$ contains a nonfree $R$-module, then $\syz^dk$ belongs to $\X$.
\end{lem}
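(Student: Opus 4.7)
The strategy is to imitate the proof of Lemma \ref{xkx}, replacing the closure under tensor products and transposes by an application of $\syz^d$ to $\Hom$-sequences. First, noting that every thick subcategory of $\CM(R)$ containing $R$ is a resolving subcategory of $\mod R$, \cite[Theorem A]{res} supplies a nonfree $X\in\X$ that is locally free on the punctured spectrum of $R$. Then, exactly as in Lemma \ref{xkx}, \cite[Corollary 2.11]{res} gives that $\Ext^1(X,\syz X)$ is a nonzero $R$-module of finite length, so its socle contains a nonzero element $\sigma$. Realize $\sigma$ as an extension $0\to\syz X\to Y\to X\to 0$; since $X,\syz X\in\X$ and $Y$ is Cohen-Macaulay (by the depth lemma), thickness of $\X$ forces $Y\in\X$.

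Applying $\Hom(-,\syz X)$ to this extension and truncating the long exact sequence at the cyclic submodule $R\sigma\subseteq\Ext^1(X,\syz X)$, which is isomorphic to $k$ because $\m\sigma=0$, produces
\[
0 \to \Hom(X,\syz X) \to \Hom(Y,\syz X) \to \Hom(\syz X,\syz X) \to k \to 0.
\]
Letting $K$ denote the image of the middle map, this splits into two short exact sequences
\[
0 \to \Hom(X,\syz X) \to \Hom(Y,\syz X) \to K \to 0, \qquad 0 \to K \to \Hom(\syz X,\syz X) \to k \to 0.
\]

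Next I would apply $\syz^d$ to each of these. The main technical point, and the expected obstacle, is a standard consequence of the horseshoe lemma together with the fact that any non-minimal free resolution differs from the minimal one by trivial summands: any short exact sequence $0\to A\to B\to C\to 0$ in $\mod R$ gives rise to an exact sequence
\[
0 \to \syz^d A \to \syz^d B \oplus R^a \to \syz^d C \to 0
\]
whose three terms are Cohen-Macaulay; this is what bridges the $\Hom$-modules (which need not be Cohen-Macaulay) and the thick subcategory $\X\subseteq\CM(R)$. Once this is in place, closure of $\X$ under $\syz^d\Hom$, applied to the pairs $(X,\syz X),(Y,\syz X),(\syz X,\syz X)$, places $\syz^d\Hom(X,\syz X)$, $\syz^d\Hom(Y,\syz X)$, and $\syz^d\Hom(\syz X,\syz X)$ into $\X$. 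Combined with $R\in\X$, thickness of $\X$ applied to the $\syz^d$ of the first short exact sequence yields $\syz^d K\in\X$, and thickness applied to the $\syz^d$ of the second sequence then yields $\syz^d k\in\X$, as required.
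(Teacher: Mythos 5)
Your proof is correct and follows essentially the same route as the paper: reduce to the four-term exact sequence $0 \to \Hom(X,\syz X) \to \Hom(Y,\syz X) \to \Hom(\syz X,\syz X) \to k \to 0$ exactly as in Lemma \ref{xkx}, then apply $\syz^d$ (with free-summand corrections coming from the horseshoe lemma and uniqueness of minimal resolutions) and use closure under $\syz^d\Hom$ together with thickness. The only difference is cosmetic: you split the four-term sequence into two short exact sequences before taking syzygies, whereas the paper takes $\syz^d$ of the whole sequence first and decomposes afterwards, and you make explicit the horseshoe-type step that the paper leaves implicit.
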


\begin{proof}
As in the proof of Lemma \ref{xkx}, we can choose an $R$-module $X\in\X$ and a nonzero element $\sigma\in\Ext_R^1(X,\syz X)$ with $\m\sigma=0$, and from an exact sequence $0 \to \syz X \to Y \to X \to 0$ corresponding to $\sigma$ we get an exact sequence
$$
0 \to \Hom(X,\syz X) \to \Hom(Y,\syz X) \to \Hom(\syz X,\syz X) \to k \to 0.
$$
Applying $\syz^d$ to this gives rise to an exact sequence
$$
0 \to \syz^d\Hom(X,\syz X) \to \syz^d\Hom(Y,\syz X)\oplus R^n \to \syz^d\Hom(\syz X,\syz X)\oplus R^m \to \syz^dk \to 0
$$
of Cohen-Macaulay $R$-modules.
Note that the first, second and third terms in this exact sequence are all in $\X$.
Decomposing the exact sequence into two short exact sequences and using the thickness of $\X$, we observe that the fourth term, $\syz^dk$, is also in $\X$.
\end{proof}

For a subcategory $\X$ of $\CM(R)$, we denote by $\widetilde\X$ the subcategory of $\mod R$ consisting of all modules $M$ such that there is an exact sequence
$$
0 \to X_n \to X_{n-1} \to \cdots \to X_0 \to M \to 0
$$
with each $X_i$ is in $\X$.
An analogue of Proposition \ref{vmx} holds true.

\begin{prop}\label{xrpkp}
Let $R$ be a Cohen-Macaulay local ring with a canonical module $\omega$.
Let $\X$ be a thick subcategory of $\CM(R)$ containing $R$ and $\omega$.
The following three conditions are equivalent:
\begin{enumerate}[\rm (1)]
\item
$\X$ is closed under $\syz^d\Hom$;
\item
$R/\p$ belongs to $\widetilde\X$ for every $\p\in\V(\X)$;
\item
$\kappa(\p)$ belongs to $\widetilde{\add_{R_\p}\X_\p}$ for every $\p\in\V(\X)$.
\end{enumerate}
\end{prop}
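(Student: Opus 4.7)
The plan is to mirror the proof of Proposition \ref{vmx}, establishing the cycle (2) $\Rightarrow$ (3) $\Rightarrow$ (1) $\Rightarrow$ (2). The step (2) $\Rightarrow$ (3) is immediate by localization at $\p \in \V(\X)$: a resolution $0 \to X_n \to \cdots \to X_0 \to R/\p \to 0$ by objects of $\X$ localizes to a resolution of $\kappa(\p) = (R/\p)_\p$ by objects $(X_i)_\p \in \add_{R_\p}\X_\p$, giving $\kappa(\p) \in \widetilde{\add_{R_\p}\X_\p}$.

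The substance of the proof lies in an auxiliary classification statement: under either hypothesis (1) or (3), every Cohen--Macaulay $R$-module $M$ with $\V(M) \subseteq \V(\X)$ belongs to $\X$. Granting this, (3) $\Rightarrow$ (1) is obtained by applying the classification to $M = \syz^d\Hom(X,Y)$, which is Cohen--Macaulay with $\V(M) \subseteq \V(X) \cup \V(Y) \subseteq \V(\X)$ by Lemma \ref{imm}(1). For (1) $\Rightarrow$ (2), applying the classification to $M = \syz^d(R/\p)$ (Cohen--Macaulay, with $\V(M)$ contained in the specialization closure of $\{\p\}$, which sits in $\V(\X)$ by Lemma \ref{cv}(6)) yields $\syz^d(R/\p) \in \X$, and the truncated minimal free resolution $0 \to \syz^d(R/\p) \to F_{d-1} \to \cdots \to F_0 \to R/\p \to 0$ then exhibits $R/\p \in \widetilde\X$. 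The classification itself would be proved by induction on $\dim\V(M)$ along the lines of Proposition \ref{resmodkey}: the case $\dim\V(M) = -\infty$ is Lemma \ref{cv}(1); the case $\dim\V(M) = 0$ reduces via Lemma \ref{cv}(2) to checking $\syz^d k \in \X$, which is Lemma \ref{syzd} under (1) (assuming $\X$ contains a nonfree module, otherwise $\V(\X) = \emptyset$ and everything is vacuous), and under (3) follows by splicing a resolution $0 \to Z_n \to \cdots \to Z_0 \to k \to 0$ with $Z_i \in \X$ against the minimal free resolution of $k$, applying $\syz^d$, and invoking thickness of $\X$ together with $R \in \X$ to keep all intermediate modules in $\X$; the inductive step invokes Lemma \ref{cv}(5), whose local hypothesis is checked by a parallel induction in $R_\p$.

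The main obstacle is precisely this verification of the local hypothesis inside the inductive step: at each $\p \in \min\V(M)$, one needs $M_\p \in \add_{R_\p}\X_\p$, and this should be produced by a localized form of the same classification run in $R_\p$. For the descent to close, the localized category $\add_{R_\p}\X_\p$ must inherit enough of $\X$'s structure, and most crucially $\omega_\p$ must lie in $\add_{R_\p}\X_\p$---a property guaranteed precisely by the assumption $\omega \in \X$---so that the local dimension-shifting and Cohen--Macaulay approximation arguments remain available in $R_\p$. The tildes in (2) and (3) play a role here as well: the classification recovers $\X$ from $\V(\X)$ only up to finite resolutions, reflecting the gap between $\CM(R)$ and $\mod R$ that makes the passage to $\widetilde\X$ necessary.
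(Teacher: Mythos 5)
Your proof is correct in outline, but it takes a genuinely different route from the paper's. The paper dispatches this proposition almost entirely by citation: the equivalence (2) $\Leftrightarrow$ (3) is quoted from \cite[Corollary 4.11]{stcm}; the implication (3) $\Rightarrow$ (1) uses \cite[Theorem 4.10]{stcm} to write $\X = \V^{-1}_{\CM}(\Phi)$ and then applies Lemma \ref{imm}(1) together with closure of $\V^{-1}(\Phi)$ under syzygies; and the only genuinely hands-on step is (1) $\Rightarrow$ (3), where one checks (following \cite[Proposition 4.9]{stcm}) that $\add_{R_\p}\X_\p$ is a thick subcategory of $\CM(R_\p)$ closed under $\syz^e\Hom$ for $e = \dim R_\p$, and then invokes Lemma \ref{syzd} locally. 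You instead re-derive the content of \cite[Theorem 4.10]{stcm} inline, by adapting the induction of Proposition \ref{resmodkey} to the Cohen--Macaulay setting via Lemma \ref{cv}, and then obtain the implications by running the cycle (2) $\Rightarrow$ (3) $\Rightarrow$ (1) $\Rightarrow$ (2). This is self-contained but considerably longer, and two places that you pass over lightly would need to be written out: first, the base case under (3), where extracting $\syz^dk \in \X$ from a finite $\X$-resolution of $k$ requires a careful horseshoe/depth-lemma induction on the length of the resolution so that one only ever applies the two-out-of-three property of $\X$ to short exact sequences of Cohen--Macaulay modules; and second, the descent to $R_\p$ in the inductive step, where one must check that the localized versions of the hypotheses (thickness, containing $R_\p$ and $\omega_\p$, and the local form of (1) or (3)) are inherited by $\add_{R_\p}\X_\p$ --- you flag this as the ``main obstacle,'' correctly, and the paper addresses it by citing the relevant parts of \cite{stcm} rather than re-proving them. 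In short: both approaches work; the paper's buys brevity by leaning on previously published classification results, while yours buys self-containment at the cost of redoing a substantial piece of \cite{stcm}.
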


\begin{proof}
(2) $\Leftrightarrow$ (3): This equivalence follows from \cite[Corollary 4.11]{stcm}.

(1) $\Rightarrow$ (3): Let $\p$ be a prime ideal in $\V(\X)$.
Then $\add_{R_\p}\X_\p$ contains a nonfree $R_\p$-module.
The argument (3) in the proof of \cite[Proposition 4.9]{stcm} shows that $\add_{R_\p}\X_\p$ is a thick subcategory of $\CM(R_\p)$ containing $R_\p$.

Set $e=\dim R_\p$.
Let us check that $\add_{R_\p}\X_\p$ is, as a subcategory of $\mod R_\p$, closed under $\syz^e\Hom$.
Let $M,N$ be $R_\p$-modules in $\add_{R_\p}\X_\p$.
Then $M,N$ are isomorphic to direct summands of $X_\p,Y_\p$ for some $X,Y\in\X$, and $\syz^e_{R_\p}\Hom_{R_\p}(M,N)$ is isomorphic to a direct summand of $\syz^e_{R_\p}\Hom_{R_\p}(X_\p,Y_\p)$, which is isomorphic up to free summand to $(\syz^e_R\Hom_R(X,Y))_\p$.
Since $\syz^e_R\Hom_R(X,Y)$ is in $\X$, the module $\syz^e_{R_\p}\Hom_{R_\p}(M,N)$ belongs to $\add_{R_\p}\X_\p$.
Thus $\add_{R_\p}\X_\p$ is closed under $\syz^e\Hom$.

Now Lemma \ref{syzd} shows that $\syz^e\kappa(\p)$ belongs to $\add_{R_\p}\X_\p$, which implies that $\kappa(\p)$ belongs to $\widetilde{\add_{R_\p}\X_\p}$, as desired.

(3) $\Rightarrow$ (1): By \cite[Theorem 4.10]{stcm}, we have $\X=\V^{-1}_{\CM}(\Phi)$ for some subset $\Phi$ of $\Spec R$.
Let $X,Y$ be modules in $\X$.
Then by Lemma \ref{imm}(1), $\Hom(X,Y)$ belongs to $\V^{-1}(\Phi)$.
Hence $\syz^d\Hom(X,Y)$ is in $\V^{-1}_{\CM}(\Phi)=\X$ since $\V^{-1}(\Phi)$ is closed under syzygies by Lemma \ref{nv}(8), and thus $\X$ is closed under $\syz^d\Hom$.
\end{proof}

Combining \cite[Theorem 4.10]{stcm} and Proposition \ref{xrpkp}, we obtain the following classification result of thick subcategories.
Here, $\ng(R)$ denotes the {\em non-Gorenstein locus} of $R$, that is, the set of prime ideals $\p$ of $R$ such that the local ring $R_\p$ is non-Gorenstein.

\begin{thm}
Let $R$ be a Cohen-Macaulay local ring with a canonical module $\omega$.
Then one has the following one-to-one correspondence:
$$
\begin{CD}
\left\{
\begin{matrix}
\text{Thick subcategories of $\CM(R)$}\\
\text{containing $R,\omega$}\\
\text{and closed under $\syz^d\Hom$}
\end{matrix}
\right\}
\begin{matrix}
@>{\V}>>\\
@<<{\V^{-1}_{\CM}}<
\end{matrix}
\left\{
\begin{matrix}
\text{Specialization-closed subsets}\\
\text{of $\Spec R$ contained in $\Sing R$}\\
\text{and containing $\ng(R)$}
\end{matrix}
\right\}.
\end{CD}
$$
\end{thm}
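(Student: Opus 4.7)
The plan is to combine \cite[Theorem 4.10]{stcm} with Proposition \ref{xrpkp} established in this section; the main work is to verify that the extra constraints (closure under $\syz^d\Hom$ on one side, containment of $\ng(R)$ on the other) match up under the correspondence.

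First, I would verify well-definedness of both maps. Given $\X$ thick in $\CM(R)$ containing $R,\omega$ and closed under $\syz^d\Hom$, the fact that $\X$ is automatically a resolving subcategory of $\mod R$ contained in $\CM(R)$ lets me apply Lemma \ref{cv}(6) to get that $\V(\X)$ is specialization-closed and contained in $\Sing R$. Since $\omega\in\X$ and the localization $\omega_\p$ is a canonical module of $R_\p$, it is free exactly when $R_\p$ is Gorenstein; hence $\V(\omega)=\ng(R)$, and $\ng(R)\subseteq\V(\X)$. Conversely, given $\Phi$ specialization-closed with $\ng(R)\subseteq\Phi\subseteq\Sing R$, Lemma \ref{cv}(8) makes $\V^{-1}_{\CM}(\Phi)$ a resolving subcategory contained in $\CM(R)$; it contains $R$ since $\V(R)=\emptyset$ and contains $\omega$ since $\V(\omega)=\ng(R)\subseteq\Phi$.

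Next, I would argue $\V^{-1}_{\CM}(\Phi)$ is thick and closed under $\syz^d\Hom$. Thickness of $\V^{-1}_{\CM}(\Phi)$ is precisely the content of \cite[Theorem 4.10]{stcm}. To get closure under $\syz^d\Hom$, I verify condition (3) of Proposition \ref{xrpkp}: for $\p\in\V(\V^{-1}_{\CM}(\Phi))=\Phi$ (by Lemma \ref{cv}(10)), Lemma \ref{cv}(7) gives $\syz^d_R(R/\p)\in\V^{-1}_{\CM}(\Phi)$. Localizing at $\p$ places $\syz^d_{R_\p}\kappa(\p)$ (up to a free summand) in $\add_{R_\p}(\V^{-1}_{\CM}(\Phi))_\p$, and truncating a minimal free resolution of $\kappa(\p)$ at step $d$ exhibits $\kappa(\p)$ as the cokernel of a length-$d$ sequence of free modules ending in $\syz^d_{R_\p}\kappa(\p)$, which places $\kappa(\p)$ in $\widetilde{\add_{R_\p}(\V^{-1}_{\CM}(\Phi))_\p}$. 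Then Proposition \ref{xrpkp} yields closure under $\syz^d\Hom$.

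Finally, I would check the two compositions. The identity $\V(\V^{-1}_{\CM}(\Phi))=\Phi$ is Lemma \ref{cv}(10). For the other direction, given $\X$ in the left-hand family, Proposition \ref{xrpkp} translates its closure under $\syz^d\Hom$ into condition (3), and then \cite[Theorem 4.10]{stcm} gives $\X=\V^{-1}_{\CM}(\Phi)$ for some $\Phi$, forcing $\Phi=\V(\X)$ by Lemma \ref{cv}(10). The main potential obstacle is the subtlety around closure under $\syz^d\Hom$ for $\V^{-1}_{\CM}(\Phi)$: one must carefully set up the local truncation argument so that condition (3) of Proposition \ref{xrpkp} is really verified, rather than merely guessed, since Proposition \ref{xrpkp} itself already presupposes thickness in order to be applied.
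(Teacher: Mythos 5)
Your proposal is correct and follows the same high-level route as the paper, which simply cites \cite[Theorem~4.10]{stcm} together with Proposition~\ref{xrpkp}. One remark on efficiency: your verification that $\V^{-1}_{\CM}(\Phi)$ is closed under $\syz^d\Hom$ detours through condition (3) of Proposition~\ref{xrpkp} (building $\kappa(\p)\in\widetilde{\add_{R_\p}(\V^{-1}_{\CM}(\Phi))_\p}$ by truncating a free resolution) and then invoking the implication (3)~$\Rightarrow$~(1). This works, but it is more roundabout than necessary, and you are right to worry that it forces you to establish thickness of $\V^{-1}_{\CM}(\Phi)$ separately before you may even apply the proposition. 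The cleaner route, which is in fact exactly the argument the paper buries inside the proof of (3)~$\Rightarrow$~(1) of Proposition~\ref{xrpkp}, is direct: for $X,Y\in\V^{-1}_{\CM}(\Phi)$, Lemma~\ref{imm}(1) gives $\Hom_R(X,Y)\in\V^{-1}(\Phi)$, and since $\V^{-1}(\Phi)$ is resolving (hence closed under syzygies) by Lemma~\ref{nv}(8), one gets $\syz^d\Hom_R(X,Y)\in\V^{-1}_{\CM}(\Phi)$ with no appeal to Proposition~\ref{xrpkp} at all. This removes the circularity concern you flagged and shortens the argument. Your treatment of well-definedness via $\V(\omega)=\ng(R)$ and of the two compositions via Lemma~\ref{cv}(10) and Proposition~\ref{xrpkp} is accurate.
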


%%%%%%%%%%%%%%%%%%%%%%%%%%%%%%%%%%%%%%%%%%%%%%%%%%%%%%%%%%%%%%%%%%%%%%%%%%%%%%%
%%%%%%%%%%%%%%%%%%%%%%%%%%%%%%%%%%%%%%%%%%%%%%%%%%%%%%%%%%%%%%%%%%%%%%%%%%%%%%%
\section{Minimal multiplicity}\label{sec5}
%%%%%%%%%%%%%%%%%%%%%%%%%%%%%%%%%%%%%%%%%%%%%%%%%%%%%%%%%%%%%%%%%%%%%%%%%%%%%%%
%%%%%%%%%%%%%%%%%%%%%%%%%%%%%%%%%%%%%%%%%%%%%%%%%%%%%%%%%%%%%%%%%%%%%%%%%%%%%%%

In this section, we consider the classification problem of resolving subcategories over a Cohen-Macaulay local ring with minimal multiplicity.
Recall that a Cohen-Macaulay local ring $R$ always satisfies the inequality $$\e(R)\ge \edim R-\dim R+1$$, where $\e(R)$ denotes the multiplicity of $R$ and $\edim R$ denotes the embedding dimension of $R$, and that $R$ is said to have {\em minimal multiplicity} if the equality holds.

First of all, we prove that one can determine all resolving subcategories over an Artinian local ring whose maximal ideal is square zero.

\begin{lem}\label{m^2}
If $\m^2=0$, then all resolving subcategories of $\mod R$ are $\add R$ and $\mod R$.
\end{lem}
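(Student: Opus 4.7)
The plan is to reduce everything to showing that any resolving subcategory strictly larger than $\add R$ must contain $k$, after which Lemma \ref{nv}(2) finishes the argument.

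First I would observe that $\m^2=0$ forces $R$ to be Artinian: the maximal ideal is nilpotent in a Noetherian local ring, so $\dim R=0$, the Cohen-Macaulay hypothesis becomes automatic, and $\Spec R=\{\m\}$. Consequently, for any non-free $R$-module $N$ one has $\V(N)=\{\m\}$ and $\dim\V(N)=0$. The case when $R$ is a field (i.e.\ $\m=0$) is trivial because every module is free, so I may assume $\m\ne 0$.

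Next, let $\X$ be a resolving subcategory with $\X\ne\add R$, and pick a non-free $M\in\X$. Let $F\twoheadrightarrow M$ be a minimal free cover; then $\syz M\subseteq\m F$, so $\m\cdot\syz M\subseteq\m^2 F=0$. Thus $\syz M$ is annihilated by $\m$ and hence is a $k$-vector space, say $\syz M\cong k^{n}$, with $n\ge 1$ because $M$ is non-free. Since $\X$ is resolving, it is closed under syzygies and direct summands, so $k\in\X$.

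Finally, given any $N\in\mod R$: if $N$ is free it lies in $\add R\subseteq\X$, and if $N$ is non-free then $\dim\V(N)=0$, so Lemma \ref{nv}(2) applied with $k\in\X$ yields $N\in\X$. Hence $\X=\mod R$. There is no real obstacle here; the whole argument rests on the one-line observation that $\m^2=0$ makes the first syzygy of any module automatically a $k$-vector space.
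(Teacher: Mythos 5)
Your proof is correct and follows essentially the same route as the paper: the key step in both is that $\m^2=0$ forces the first syzygy of a nonfree module of $\X$ to be a nonzero $k$-vector space, so $k\in\X$. The only difference is the finish, where the paper concludes directly by noting that every module over the Artinian ring $R$ has finite length and is built from $k$ by extensions, while you invoke Lemma \ref{nv}(2) — legitimate here, since $\m^2=0$ makes $R$ Artinian hence Cohen-Macaulay and that lemma does not depend on this one, but a slightly heavier tool for the same step.
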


\begin{proof}
Let $\X\ne\add R$ be a resolving subcategory of $\mod R$.
Then $\X$ contains a nonfree $R$-module $X$, and hence $\syz X\ne0$.
The assumption $\m^2=0$ shows $\m(\syz X)=0$.
As $\syz X$ belongs to $\X$, so does $k$.
Since $R$ is Artinian, every $R$-module has finite length and can be constructed from $k$ by taking extensions.
Hence every $R$-module belongs to $\X$.
\end{proof}

For a Cohen-Macaulay local ring with minimal multiplicity, a result analogous with Lemmas \ref{xkx} and \ref{syzd} holds.
For an $R$-module $X$, we denote by $\res_RX$ the smallest resolving subcategory of $\mod R$ containing $X$.

\begin{prop}\label{sdxr}
Let $R$ be a Cohen-Macaulay local ring with minimal multiplicity.
Suppose that the residue field $k$ is infinite.
Let $\X$ be a resolving subcategory of $\mod R$ contained in $\CM(R)$ containing a nonfree $R$-module.
Then $\syz^dk$ belongs to $\X$.
\end{prop}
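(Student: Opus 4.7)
The plan is to exploit minimal multiplicity to construct a length-$d$ exact sequence that resolves some $k^n$ by direct sums of a nonfree module in $\X$, and then to chase kernels of epimorphisms down the sequence in order to place $\syz^d k$ inside $\X$.

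\textbf{Setup.} Since $k$ is infinite and $R$ has minimal multiplicity, choose a minimal reduction $Q=(x_1,\ldots,x_d)$ of $\m$ so that $\underline x:=x_1,\ldots,x_d$ is a system of parameters satisfying $\m^2=Q\m$. Pick any nonfree $X\in\X$ and set $Y:=\syz X$; then $Y\in\X$, $Y\ne 0$, and $Y$ is Cohen--Macaulay. Writing $0\to Y\to F\to X\to 0$ with $F$ the minimal free cover of $X$, minimality gives $Y\subseteq\m F$, so $\m Y\subseteq\m^2F=Q\m F\subseteq QF$. Since $\underline x$ is $X$-regular one has $QF\cap Y=QY$, and hence $\m Y\subseteq QY$. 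Thus $Y/QY$ is a $k$-vector space, and Nakayama forces $Y/QY\ne 0$; write $Y/QY\cong k^n$ with $n\ge 1$.

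\textbf{Koszul resolution.} The Cohen--Macaulay property of $Y$ makes $\underline x$ a $Y$-regular sequence, so the Koszul complex $K_\bullet(\underline x;Y)$ is exact in positive degrees with $H_0\cong k^n$. This yields an exact sequence
\[
0\to Y\to Y^d\to\cdots\to Y^d\to Y\to k^n\to 0
\]
of length $d$ whose $j$-th term is $K_j:=Y^{\binom{d}{j}}\in\X$. Break it into short exact sequences $(E_j):0\to M_{j+1}\to K_j\to M_j\to 0$ for $0\le j\le d-1$, where $M_0=k^n$, $M_d=Y$, and the $M_j$ for $1\le j\le d-1$ are the intermediate kernels. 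For each $j$, forming the pullback of $(E_j)$ against the minimal free cover $0\to\syz M_j\to F_j\to M_j\to 0$ splits on the free side and produces the generalized Schanuel sequence
\[
(*_j):\quad 0\to\syz M_j\to M_{j+1}\oplus F_j\to K_j\to 0.
\]

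\textbf{Induction.} The key claim, proved by induction on $k$, is that $\syz^k M_{d-k}\in\X$ for every $1\le k\le d$. The base case $k=1$ is immediate: $(*_{d-1})$ reads $0\to\syz M_{d-1}\to Y\oplus F_{d-1}\to Y^d\to 0$, whose middle and right terms lie in $\X$, so $\syz M_{d-1}\in\X$ by closure under kernels of epimorphisms. For the step from $k-1$ to $k$ with $k\ge 2$, apply the horseshoe lemma to $(*_{d-k})$ and take $\syz^{k-1}$; since $\syz^{k-1}F_{d-k}=0$, one obtains an exact sequence
\[
0\to\syz^kM_{d-k}\to\syz^{k-1}M_{d-k+1}\oplus F'\to(\syz^{k-1}Y)^{\binom{d}{d-k}}\to 0
\]
with $F'$ free. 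The middle term lies in $\X$ by the inductive hypothesis (together with closure under direct sums), and the right term lies in $\X$ by closure under syzygies and direct sums. Hence $\syz^kM_{d-k}\in\X$. Setting $k=d$ gives $\syz^d M_0=(\syz^d k)^n\in\X$, and closure under direct summands delivers $\syz^d k\in\X$.

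\textbf{Main obstacle.} The delicate point is the bookkeeping when iteratively applying the horseshoe lemma: one must correctly track the free summands introduced by the non-minimal horseshoe resolution and verify that $\syz^{k-1}F_{d-k}$ really vanishes for $k\ge 2$, so that the middle term of the resulting sequence is precisely $\syz^{k-1}M_{d-k+1}$ up to a free summand. The remaining ingredients (the inclusion $\m Y\subseteq QY$, exactness of the Koszul complex via regularity on $Y$, and the closure properties of $\X$) are routine once the minimal reduction is fixed.
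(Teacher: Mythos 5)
Your proof is correct, and it takes a genuinely different route from the paper's. The paper also fixes a reduction $Q=(x_1,\dots,x_d)$ with $\m^2=Q\m$, but then passes to the Artinian quotient $R/Q$, where $(\m/Q)^2=0$: by Lemma \ref{m^2} the only resolving subcategories there are $\add(R/Q)$ and $\mod(R/Q)$, so $k\in\res_{R/Q}(X/QX)$ for a nonfree $X\in\X$, and one climbs back to $R$ by applying the transfer result \cite[Lemma 5.8]{stcm} $d$ times (or, in the alternative remark, \cite[Remark 3.2(4)]{res} together with \cite[Lemma 4.3]{stcm}) to land $\syz_R^dk$ in $\res_RX\subseteq\X$. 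You instead stay over $R$ throughout: minimal multiplicity enters through the observation that $Y=\syz X$ satisfies $\m Y\subseteq\m^2F=Q\m F$ and $QF\cap Y=QY$ (the latter from $\Tor_1^R(X,R/Q)=0$, valid since $X$ is maximal Cohen--Macaulay), so $Y/QY\cong k^n$ with $n\ge1$; the Koszul complex $K_\bullet(\underline x;Y)$ then gives a length-$d$ exact sequence resolving $k^n$ by modules of $\X$, and your horseshoe/Schanuel induction is exactly the standard fact that a finite $\X$-resolution of $M$ of length $d$ forces $\syz^dM\in\X$ up to free summands (the bookkeeping you flag is routine: $\syz^{k-1}F_{d-k}=0$ for $k\ge2$ because free modules have zero syzygies, and generalized Schanuel accounts for the extra free summand $F'$). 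What each approach buys: yours is self-contained, needing only the closure properties of resolving subcategories plus elementary facts about regular sequences, and it makes the role of minimal multiplicity completely explicit ($\m\syz X\subseteq Q\syz X$); the paper's argument is shorter given the cited machinery, and it isolates the clean structural statement of Lemma \ref{m^2} about rings with $\m^2=0$. The only cosmetic point in your write-up is the degenerate case $d=0$, where the displayed Koszul sequence collapses to $Y\cong k^n$ and the induction is vacuous, but your setup already yields $k\in\X$ there directly.
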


\begin{proof}
Since $k$ is infinite, there exists a parameter ideal $Q=(x_1,\dots,x_d)$ of $R$ such that $\m^2=Q\m$; see \cite[Exercise 4.6.14]{BH}.
Hence $(\m/Q)^2=0$ holds in $R/Q$.
Let $X\in\X$ be a nonfree $R$-module.
Since $X$ is Cohen-Macaulay and $Q$ is generated by a regular sequence, $X/QX$ is a nonfree $R/Q$-module (cf. \cite[Lemma 1.3.5]{BH}).
Applying Lemma \ref{m^2} to the local ring $R/Q$, we see that $\res_{R/Q}X/QX$ coincides with $\mod R/Q$, hence it contains $k$.
Making use of \cite[Lemma 5.8]{stcm} repeatedly ($d$ times), we observe that $\syz_R\syz_{R/(x_1)}\cdots\syz_{R/(x_1,\dots,x_{d-1})}k$ is in $\res_RX$, hence in $\X$.
It is easily checked that this is isomorphic to $\syz_R^dk$ up to $R$-free summand, and thus $\syz_R^dk$ belongs to $\X$.
\end{proof}

\begin{rem}
The last part of the above proof may be replaced with the following:

Since $k$ is in $\res_{R/Q}X/QX$, we see by \cite[Remark 3.2(4)]{res} that $k$ is also in $\res_R(R/Q\oplus X/QX)$.
Hence it is observed by \cite[Remark 3.2(4)]{res} again that $\syz_R^dk$ belongs to $\res_R\syz_R^d(R/Q\oplus X/QX)=\res_R\syz_R^d(X/QX)$.
Applying \cite[Lemma 4.3]{stcm} to the Koszul complex of $x_1,\dots,x_d$ with coefficients in $X$, we see that $\syz_R^d(X/QX)$ is in $\res_RX$.
Therefore $\syz_R^dk$ belongs to $\res_RX$, hence to $\X$.
\end{rem}

Here we check that the following elementary lemma holds.

\begin{lem}\label{inres}
The field $\kappa(\p)$ is infinite for any nonmaximal prime ideal $\p$ of $R$.
\end{lem}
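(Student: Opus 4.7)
The plan is to reduce the claim to showing that the ring $R/\p$ itself is infinite; since $\kappa(\p)$ is the field of fractions of the integral domain $R/\p$, it will then automatically be infinite as well. There is no significant obstacle: the content is simply that a Noetherian local domain of positive Krull dimension must be infinite, and $R/\p$ is such a domain because $\p\subsetneq\m$ forces $\dim R/\p\ge 1$.

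Concretely, I would pick any $x\in\m\setminus\p$, which exists because $\p$ is a nonmaximal prime ideal of the local ring $R$ and hence is strictly contained in the unique maximal ideal $\m$. The image $\bar x$ of $x$ in $R/\p$ is nonzero (since $x\notin\p$) and lies in the maximal ideal $\m/\p$ of $R/\p$ (since $x\in\m$), so $\bar x$ is a nonzero nonunit of the integral domain $R/\p$. Then the powers $\bar x,\bar x^2,\bar x^3,\dots$ are pairwise distinct, for an equality $\bar x^n=\bar x^m$ with $n<m$ would give $\bar x^n(1-\bar x^{m-n})=0$, forcing $\bar x^{m-n}=1$ and contradicting the fact that $\bar x$ is a nonunit. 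Hence $R/\p$, and therefore $\kappa(\p)$, has infinitely many elements, completing the proof.
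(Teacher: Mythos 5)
Your proof is correct and follows essentially the same route as the paper: both reduce to the observation that $R/\p$ is an integral domain that is not a field, hence infinite, and that $\kappa(\p)$ contains $R/\p$. The only difference is that you spell out, via the distinct powers of a nonzero nonunit $\bar x$, the standard fact (a finite integral domain is a field) which the paper simply invokes.
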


\begin{proof}
As $R/\p$ is an integral domain that is not a field, it is an infinite set.
Since $\kappa(\p)$ is the quotient field of $R/\p$, we have that $\kappa(\p)$ contains $R/\p$.
Hence $\kappa(\p)$ is also infinite.
\end{proof}

Now we can prove the following proposition, which is the essential part of the main result of this section.

\begin{prop}\label{mmpr}
Let $R$ be a Cohen-Macaulay local ring locally with minimal multiplicity on the punctured spectrum.
Let $\X$ be a resolving subcategory of $\mod R$ contained in $\CM(R)$ containing $\syz^dk$.
If $M$ is a Cohen-Macaulay $R$-module such that $\V(M)$ is contained in $\V(\X)$, then $M$ belongs to $\X$.
\end{prop}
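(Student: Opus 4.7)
My plan is to prove the proposition by induction on $\dim\V(M)$, following the same overall strategy as the proof of Proposition \ref{resmodkey}, but replacing every use of Lemma \ref{nv} with the Cohen--Macaulay analogue Lemma \ref{cv}, and replacing the role of $k$ (which lives in $\X$ in Theorem \ref{resmod}) by $\syz^d k$. The key new ingredient, which accounts for the hypothesis ``locally with minimal multiplicity on the punctured spectrum,'' is Proposition \ref{sdxr}: it allows us, after localizing at a nonmaximal prime $\p$, to produce $\syz_{R_\p}^{\dim R_\p}\kappa(\p)$ inside $\add_{R_\p}\X_\p$, which is exactly what is needed to feed into the Cohen--Macaulay induction base case at the localized ring.

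For the base cases, if $\dim\V(M)=-\infty$ then $M\in\X$ by Lemma \ref{cv}(1), and if $\dim\V(M)=0$ then $\V(M)=\{\m\}$ and the hypothesis $\syz^d k\in\X$ combined with Lemma \ref{cv}(2) gives $M\in\X$ immediately.

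For the inductive step, assume $\dim\V(M)\ge 1$ and pick $\p\in\min\V(M)$. Because $\V(M)$ is specialization-closed, the assumption $\dim\V(M)\ge 1$ forces every such $\p$ to be nonmaximal, so $\kappa(\p)$ is infinite by Lemma \ref{inres}, and $R_\p$ is a Cohen--Macaulay local ring with minimal multiplicity by hypothesis on the punctured spectrum. By Lemma \ref{cv}(4), $\add_{R_\p}\X_\p$ is a resolving subcategory of $\mod R_\p$ contained in $\CM(R_\p)$, and since $\p\in\V(M)\subseteq\V(\X)$ there is some $X\in\X$ with $X_\p$ nonfree, so $\add_{R_\p}\X_\p$ contains a nonfree $R_\p$-module. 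Proposition \ref{sdxr} therefore applies to the ring $R_\p$ and the subcategory $\add_{R_\p}\X_\p$, giving $\syz_{R_\p}^{\dim R_\p}\kappa(\p)\in\add_{R_\p}\X_\p$. By Lemma \ref{cv}(3) we have $\dim\V_{R_\p}(M_\p)=0$, so applying the already-established $\dim=0$ case of the proposition (i.e.\ Lemma \ref{cv}(2)) over $R_\p$ yields $M_\p\in\add_{R_\p}\X_\p$.

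This holds for every $\p\in\min\V(M)$, so Lemma \ref{cv}(5) produces a short exact sequence $0\to L\to N\to X'\to 0$ of Cohen--Macaulay $R$-modules with $X'\in\X$, $\V(L)\subseteq\V(M)\subseteq\V(\X)$, $\dim\V(L)<\dim\V(M)$, and $M$ a direct summand of $N$. By the induction hypothesis applied to $L$, we get $L\in\X$; then closure under extensions yields $N\in\X$, and closure under direct summands yields $M\in\X$. The step I expect to require the most care is the verification that Proposition \ref{sdxr} is really applicable after localization, i.e.\ simultaneously checking that $R_\p$ has minimal multiplicity, that $\kappa(\p)$ is infinite, that $\add_{R_\p}\X_\p$ remains a resolving subcategory inside $\CM(R_\p)$, and that it contains a nonfree module; once these are in place, the rest is a formal induction on $\dim\V(M)$.
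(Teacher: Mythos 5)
Your proposal is correct and follows essentially the same argument as the paper: induction on $\dim\V(M)$ with base cases handled by Lemma \ref{cv}(1)(2), and the inductive step localizing at a minimal (necessarily nonmaximal) prime of $\V(M)$, invoking Lemmas \ref{cv}(3)(4), \ref{inres} and Proposition \ref{sdxr} to get $\syz^{\dim R_\p}\kappa(\p)\in\add_{R_\p}\X_\p$ and hence $M_\p\in\add_{R_\p}\X_\p$, then concluding via Lemma \ref{cv}(5) and the resolving property. The only cosmetic difference is that you cite the $\dim=0$ case (Lemma \ref{cv}(2)) over $R_\p$ where the paper phrases it as the induction hypothesis; these are the same step.
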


\begin{proof}
We induce on $n:=\dim\V_R(M)$.
If $n=-\infty$ or $n=0$, then the conclusion follows from Lemma \ref{cv}(1)(2).
Let $n>0$.
Fix $\p\in\min\V_R(M)$.
Then $\p\ne\m$.
By assumption, $R_\p$ has minimal multiplicity.
Since $\p\in\V_R(\X)$, a nonfree $R_\p$-module exists in $\add_{R_\p}\X_\p$.
Hence $\syz^{\dim R_\p}\kappa(\p)\in\add_{R_\p}\X_\p$ by Lemmas \ref{cv}(4), \ref{inres} and Proposition \ref{sdxr}.
We have $\V_{R_\p}(M_\p)=\{\p R_\p\}\subseteq\V_{R_\p}(\add_{R_\p}\X_\p)$ by Lemma \ref{cv}(3).
Hence $\dim\V_{R_\p}(M_\p)=0<n$, and the induction hypothesis implies $M_\p\in\add_{R_\p}\X_\p$.
By Lemma \ref{cv}(5), there is an exact sequence $0 \to L \to N \to X \to 0$ of Cohen-Macaulay $R$-modules such that $X$ belongs to $\X$, that $M$ is a direct summand of $N$, that $\V_R(L)$ is contained in $\V_R(M)$ and that the inequality $\dim\V_R(L)<\dim\V_R(M)$ holds.
The induction hypothesis shows $L\in\X$.
The above exact sequence implies $M\in\X$.
\end{proof}

The theorem below is the main result of this section, which yields a classification of resolving subcategories of Cohen-Macaulay modules.

\begin{thm}\label{mmthm}
Let $R$ be a Cohen-Macaulay singular local ring which locally has minimal multiplicity on the punctured spectrum.
One has a one-to-one correspondence:
$$
\begin{CD}
\left\{
\begin{matrix}
\text{Resolving subcategories}\\
\text{of $\mod R$ contained in $\CM(R)$}\\
\text{and containing $\syz^dk$}
\end{matrix}
\right\}
\begin{matrix}
@>{\V}>>\\
@<<{\V^{-1}_{\CM}}<
\end{matrix}
\left\{
\begin{matrix}
\text{Nonempty specialization-closed}\\
\text{subsets of $\Spec R$}\\
\text{contained in $\Sing R$}
\end{matrix}
\right\}.
\end{CD}
$$
\end{thm}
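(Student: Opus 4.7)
The plan is to verify that the two maps are well-defined between the two collections and then mutually inverse. All of the substantive work has already been absorbed into Proposition \ref{mmpr} and the auxiliary Lemma \ref{cv}, so this is essentially a packaging argument; the only ``new'' input needed is the singularity hypothesis on $R$, which is exactly what forces $\V(\X)$ to be nonempty.

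First I would verify well-definedness of $\V$. For $\X$ on the left-hand side, Lemma \ref{cv}(6) shows that $\V(\X)$ is specialization-closed and contained in $\Sing R$. Nonemptiness uses that $R$ is singular: $\m\in\Sing R$, so $k$ has infinite projective dimension, so $\syz^dk$ is nonfree, and hence $\m\in\V(\syz^dk)\subseteq\V(\X)$. Next, for well-definedness of $\V^{-1}_{\CM}$, start from a nonempty specialization-closed subset $\Phi\subseteq\Sing R$. Lemma \ref{cv}(8) already gives that $\V^{-1}_{\CM}(\Phi)$ is a resolving subcategory of $\mod R$ contained in $\CM(R)$. To see that $\syz^dk\in\V^{-1}_{\CM}(\Phi)$, note that since $\Phi\ne\emptyset$ is specialization-closed, every prime in $\Phi$ specializes to $\m$, so $\m\in\Phi$; Lemma \ref{cv}(7) then yields $\syz^d(R/\m)=\syz^dk\in\V^{-1}_{\CM}(\Phi)$.

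Finally I would check bijectivity. The identity $\V\circ\V^{-1}_{\CM}=\mathrm{id}$ is precisely Lemma \ref{cv}(10). For $\V^{-1}_{\CM}\circ\V=\mathrm{id}$, the inclusion $\X\subseteq\V^{-1}_{\CM}(\V(\X))$ is Lemma \ref{cv}(9). For the reverse inclusion, take any $M\in\V^{-1}_{\CM}(\V(\X))$: by definition $M$ is a Cohen-Macaulay module with $\V(M)\subseteq\V(\X)$, and since $\syz^dk\in\X$ by hypothesis, Proposition \ref{mmpr} applies directly and gives $M\in\X$.

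The ``main obstacle'' was already resolved when proving Proposition \ref{mmpr} (via the reduction modulo a parameter ideal making use of minimal multiplicity on the punctured spectrum, together with Lemma \ref{m^2}); at the level of the theorem itself, the only genuinely new point is the singularity hypothesis, which is needed to rule out the empty specialization-closed set on the right-hand side and to ensure $\syz^dk$ is nonfree so that $\V(\X)$ is nonempty on the left-hand side.
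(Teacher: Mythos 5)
Your proposal is correct and follows essentially the same approach as the paper: it invokes Lemma \ref{cv}(6)(8) for well-definedness, Lemma \ref{cv}(9)(10) together with Proposition \ref{mmpr} for the mutually inverse bijections, and the singularity hypothesis only to rule out the empty set on the right and to force $\V(\X)\ne\emptyset$ on the left. The one minor cosmetic variation is that you deduce $\syz^dk\in\V^{-1}_{\CM}(\Phi)$ from $\m\in\Phi$ via Lemma \ref{cv}(7), whereas the paper directly uses $\V(\syz^dk)=\{\m\}\subseteq\Phi$; both are valid.
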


\begin{proof}
Let $\X$ be a resolving subcategory of $\mod R$ containing $\syz^dk$.
Since $R$ is singular, we have $\V(\X)\ne\emptyset$.
If $\Phi\ne\emptyset$ is a specialization-closed subset of $\Spec R$, then we have $\m\in\Phi$, and $\syz^dk\in\V^{-1}_{\CM}(\Phi)$ since $\V(\syz^dk)=\{\m\}$.
Lemma \ref{cv}(6)(8) guarantees that $\V,\V^{-1}_{\CM}$ are well-defined maps.
Lemma \ref{cv}(9)(10) and Proposition \ref{mmpr} show that the two maps form mutually inverse bijections.
\end{proof}

\begin{rem}
Over a hypersurface $R$, a classification theorem of all the resolving subcategories of $\mod R$ contained in $\CM(R)$ is obtained in \cite[Main Theorem]{stcm}.
The proof of this result heavily relies on \cite[Proposition 5.9]{stcm}, whose proof uses the fact that the localization of a hypersurface at a prime ideal is again a hypersurface.
The localization at a prime ideal of a Cohen-Macaulay local ring with minimal multiplicity does not necessarily have minimal multiplicity.
In fact, with the notation of Example \ref{mmlclz} below, the local ring $R$ has minimal multiplicity, but the localization $R_\p$ does not.
Thus, the same method does not yield a classification theorem of all the resolving subcategories of $\mod R$ contained in $\CM(R)$ over a Cohen-Macaulay local ring $R$ with minimal multiplicity.
\end{rem}

%%%%%%%%%%%%%%%%%%%%%%%%%%%%%%%%%%%%%%%%%%%%%%%%%%%%%%%%%%%%%%%%%%%%%%%%%%%%%%%
%%%%%%%%%%%%%%%%%%%%%%%%%%%%%%%%%%%%%%%%%%%%%%%%%%%%%%%%%%%%%%%%%%%%%%%%%%%%%%%
\section{Finite Cohen-Macaulay representation type}\label{sec6}
%%%%%%%%%%%%%%%%%%%%%%%%%%%%%%%%%%%%%%%%%%%%%%%%%%%%%%%%%%%%%%%%%%%%%%%%%%%%%%%
%%%%%%%%%%%%%%%%%%%%%%%%%%%%%%%%%%%%%%%%%%%%%%%%%%%%%%%%%%%%%%%%%%%%%%%%%%%%%%%

In this section, we consider the classification problem of resolving subcategories over a Cohen-Macaulay local ring of finite Cohen-Macaulay representation type.
Recall that a Cohen-Macaulay local ring $R$ has {\em finite Cohen-Macaulay representation type} if there are only finitely many nonisomorphic indecomposable Cohen-Macaulay $R$-modules.
First of all, we define the left and right perpendicular subcategories of a given subcategory, and introduce the notions of a right approximation and a contravariantly finite subcategory.

\begin{dfn}
\begin{enumerate}[(1)]
\item
Let $\X$ be a subcategory of $\mod R$.
We denote by ${}^\perp\X$ (respectively, $\X^\perp$) the subcategory of $\mod R$ consisting of all $R$-modules $M$ satisfying $\Ext_R^i(M,X)=0$ (respectively, $\Ext_R^i(X,M)=0$) for all $X\in\X$ and all $i>0$.
\item
Let $R$ be a Cohen-Macaulay local ring.
Let $\X$ be a subcategory of $\CM(R)$.
We denote by $\lp\X$ (respectively, $\X\rp$) the subcategory of $\CM(R)$ consisting of all Cohen-Macaulay $R$-modules $M$ satisfying $\Ext_R^i(M,X)=0$ (respectively, $\Ext_R^i(X,M)=0$) for all $X\in\X$ and all $i>0$.
\end{enumerate}
\end{dfn}

\begin{dfn}
Let $\X$ be a subcategory of $\mod R$.
Let $\phi:X\to M$ be a homomorphism of $R$-modules with $X\in\X$.
We say that $\phi$ is a {\em right $\X$-approximation} (of $M$) if the homomorphism $\Hom_R(X',\phi):\Hom_R(X',X)\to\Hom_R(X',M)$ is surjective for all $X'\in\X$, in other words, every homomorphism $X'\to M$ with $X'\in\X$ factors through $\phi$.
We say that $\X$ is {\em contravariantly finite} if all modules $M\in\mod R$ admit right $\X$-approximations.
\end{dfn}

We make here several statements for later use.

\begin{lem}\label{compos}
\begin{enumerate}[\rm (1)]
\item
For each subcategory $\X$ of $\mod R$, one has that ${}^\perp\X$ is a resolving subcategory of $\mod R$.
\item
Let $\X$ and $\Y$ be subcategories of $\mod R$, and assume that $\Y$ is contained in $\X$.
Let $Y \overset{g}{\to} X \overset{f}{\to} M$ be homomorphisms of $R$-modules.
If $f$ is a right $\X$-approximation and $g$ is a right $\Y$-approximation, then $fg$ is a right $\Y$-approximation.
\item
Let $R$ be a Henselian local ring, and let $\X$ be a resolving subcategory of $\mod R$.
Suppose that $M$ admits a right $\X$-approximation.
Then $M$ admits a surjective right $\X$-approximation whose kernel belongs to $\X^\perp$.
Hence there exists an exact sequence $0 \to Y \to X \to M \to 0$ of $R$-modules with $X\in\X$ and $Y\in\X^\perp$.
\item
Let $R$ be a Cohen-Macaulay local ring with a canonical module.
Then $\CM(R)$ is a contravariantly finite resolving subcategory of $\mod R$.
\end{enumerate}
\end{lem}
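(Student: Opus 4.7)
The plan is to treat the four parts in order, with (3) being the main difficulty. For (1), the resolving conditions on ${}^\perp\X$ follow directly from the long exact sequence of $\Ext$: $R \in {}^\perp\X$ since $\Ext^i(R,-) = 0$ for $i > 0$; closure under direct summands is additivity of $\Ext$ in the first slot; and for a short exact sequence $0 \to L \to M \to N \to 0$, the long exact sequence squeezes $\Ext^i(L,X)$ between $\Ext^i(M,X)$ and $\Ext^{i+1}(N,X)$ in exactly the right way to give both closure under extensions and closure under syzygies. For (2), the chase is immediate: given $h \colon Y' \to M$ with $Y' \in \Y \subseteq \X$, factor $h = f h'$ via the $\X$-approximation property of $f$, then $h' = g h''$ via the $\Y$-approximation property of $g$, so $h = (fg)h''$.

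For (3), my plan has two stages. First, since $R \in \X$, I can enlarge any right $\X$-approximation $\phi \colon X \to M$ to a surjective one by taking $\phi \oplus \pi \colon X \oplus R^n \to M$ where $\pi \colon R^n \to M$ is any surjection; this remains a right $\X$-approximation and is now surjective. Second, I invoke the Henselian hypothesis: endomorphism rings of finitely generated modules over a Henselian local ring are semiperfect, so I can strip redundant summands from $X$ to obtain a \emph{right minimal} surjective $\X$-approximation $\phi' \colon X' \to M$, meaning every $\alpha \in \operatorname{End}_R(X')$ satisfying $\phi'\alpha = \phi'$ is an automorphism. Then I apply a Wakamatsu-type argument: setting $K := \ker \phi'$ and given $X'' \in \X$ with an extension $0 \to K \to E \to X'' \to 0$, form the pushout along $K \hookrightarrow X'$ to get $0 \to X' \to P \to X'' \to 0$. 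By (1), $\X$ is closed under extensions, so $P \in \X$; the pushout universal property provides a natural map $P \to M$ which, by the approximation property of $\phi'$, factors as $P \xrightarrow{\beta} X' \xrightarrow{\phi'} M$. Composing $\beta$ with $X' \to P$ gives an endomorphism $\alpha$ of $X'$ with $\phi'\alpha = \phi'$, which is therefore an automorphism, splitting $0 \to X' \to P \to X'' \to 0$; a diagram chase through the pushout splits the original extension, proving $\Ext^1(X'',K) = 0$. For $i \geq 2$, the closure of $\X$ under syzygies gives $\syz^{i-1} X'' \in \X$, and dimension-shift via $0 \to \syz X'' \to F \to X'' \to 0$ with $F$ free reduces $\Ext^i(X'',K)$ to $\Ext^1(\syz^{i-1} X'',K) = 0$ inductively.

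For (4), $\CM(R)$ is resolving by standard depth-lemma arguments: $R$ is Cohen--Macaulay, direct summands preserve depth, extensions of Cohen--Macaulay modules are Cohen--Macaulay, and the kernel of a surjection of Cohen--Macaulay modules has depth at least $d$. Contravariant finiteness is the Auslander--Buchweitz approximation theorem applied to the canonical module: for each $M \in \mod R$ it produces an exact sequence $0 \to Y \to X \to M \to 0$ with $X \in \CM(R)$ and $Y$ of finite injective dimension, and since $\Ext_R^i(X',Y) = 0$ for every $X' \in \CM(R)$ and $i > 0$ (from finite injective dimension of $Y$ together with the $\Ext$--depth formula), applying $\Hom_R(X',-)$ to the sequence shows $X \to M$ is the desired right $\CM(R)$-approximation.

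The principal obstacle is the Wakamatsu-type argument in (3); verifying that the splitting of the pushout sequence really descends to a splitting of the original extension requires a careful diagram chase on pushout universal properties, and the passage from $\Ext^1$ to higher $\Ext^i$ must invoke closure of $\X$ under syzygies precisely at the step that reduces the dimension. The remaining parts are essentially bookkeeping once the right tools (long exact sequence of $\Ext$, depth lemma, Auslander--Buchweitz) are invoked.
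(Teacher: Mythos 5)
Your proof is correct and follows the standard route: the paper dispatches (1) and (2) as straightforward and for (3) and (4) simply cites Auslander--Reiten \cite[Proposition~3.3]{AR}, \cite[Lemma~3.8]{arg}, and \cite[Examples~2.5(3) and 3.3(5)]{arg}, which contain exactly the Wakamatsu-lemma argument (with the Henselian hypothesis entering via Krull--Schmidt to extract a right-minimal approximation) and the Auslander--Buchweitz maximal Cohen--Macaulay approximation theory that you reproduce. The one step in (3) worth spelling out in full is passing from the splitting of the pushout $0\to X'\to P\to X''\to 0$ to $\Ext^1_R(X'',K)=0$; this is cleanest by noting that surjectivity of $\Hom_R(X'',X')\to\Hom_R(X'',M)$ (the approximation property) forces $\Ext^1_R(X'',K)\to\Ext^1_R(X'',X')$ to be injective, and your pushout computation shows the image of the given class vanishes.
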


\begin{proof}
(1) and (2) are straightforward.
(3) is essentially proved in \cite[Proposition 3.3]{AR}.
The proof is explicitly stated in \cite[Lemma 3.8]{arg}.
(4) follows from \cite[Examples 2.5(3) and 3.3(5)]{arg}.
(The assumption that $R$ is Henselian is not used there.)
\end{proof}

Let $R$ be a Cohen-Macaulay local ring with a canonical module $\omega$.
Let $M$ be a Cohen-Macaulay $R$-module and $\X$ a subcategory of $\CM(R)$.
Then we set $M^\dag=\Hom_R(M,\omega)$, and denote by $\X^\dag$ the subcategory of $\mod R$ consisting of all modules of the form $X^\dag$ with $X\in\X$.
Note that $M^\dag$ is also a Cohen-Macaulay $R$-module, and hence $\X^\dag$ is contained in $\CM(R)$.
Note also that $M^\dag$ is uniquely determined up to isomorphism since so is a canonical module, and hence $\X^\dag$ is uniquely determined.

\begin{lem}\label{perpd}
Let $R$ be a Cohen-Macaulay local ring with a canonical module $\omega$.
Let $\X$ be a subcategory of $\CM(R)$.
Let $M$ be a module in $\X\rp$.
Then $M^\dag$ belongs to $\lp(\X^\dag)$.
If $\X$ contains $\omega$, then $\Ext_R^i(M^\dag,R)=0$ for every $i>0$.
\end{lem}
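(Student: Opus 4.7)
The plan is to exploit the fact that on Cohen-Macaulay modules the functor $(-)^{\dag}=\Hom_R(-,\omega)$ is an exact duality, and in particular preserves $\Ext$ in a strong way. Concretely, I will use (and briefly justify) the natural isomorphism
$$
\Ext_R^i(M^{\dag},N^{\dag})\;\cong\;\Ext_R^i(N,M)\qquad(i\ge 0)
$$
valid for Cohen-Macaulay $R$-modules $M$ and $N$. Granted this, the lemma is almost immediate.

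First I would establish the Ext-duality. Take a free resolution $F_\bullet\to N$ and observe that, since $N$ is Cohen-Macaulay, $\Ext_R^{j}(N,\omega)=0$ for all $j>0$; equivalently, $\Hom_R(F_\bullet,\omega)$ is a (co)resolution of $N^{\dag}$ by modules in $\add\omega$. Computing $\Ext_R^i(M^{\dag},N^{\dag})$ from this $\add\omega$-resolution of $N^{\dag}$ and using the standard adjunction-type isomorphism $\Hom_R(M^{\dag},\omega)\cong M$ (i.e.\ $M^{\dag\dag}\cong M$) yields exactly $\Ext_R^i(N,M)$. This is the standard canonical-module duality and is the only nontrivial ingredient; I would cite a reference such as Bruns--Herzog if I wanted to avoid reproducing it.

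With this in hand, the first claim is purely formal. For any $X\in\X$ and any $i>0$,
$$
\Ext_R^i(M^{\dag},X^{\dag})\;\cong\;\Ext_R^i(X,M)\;=\;0,
$$
the last equality holding because $M\in\X\rp$. Thus $M^{\dag}\in\lp(\X^{\dag})$.

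For the second statement, I note that $\omega^{\dag}=\Hom_R(\omega,\omega)\cong R$ (another standard consequence of $\omega$ being canonical over a Cohen-Macaulay ring). Hence if $\omega\in\X$, then $R\cong\omega^{\dag}\in\X^{\dag}$, and the conclusion of the first part, applied to the object $R\in\X^{\dag}$, gives $\Ext_R^i(M^{\dag},R)=0$ for all $i>0$. The only real ``obstacle'' in the proof is the appeal to the Ext-duality under $(-)^{\dag}$; everything else is bookkeeping.
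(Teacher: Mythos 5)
Your proof is correct and takes essentially the same route as the paper: both arguments rest on the duality $\Ext_R^i(M^\dag,X^\dag)\cong\Ext_R^i(X,M)$ for Cohen-Macaulay modules $X\in\X$, $M\in\X\rp$, and then deduce the second claim from $\omega^\dag\cong\Hom_R(\omega,\omega)\cong R$. The only difference is presentational: the paper gets the isomorphism via an $\RHom$-adjunction computation in the derived category, while you unwind it with an $\add\omega$-coresolution of $X^\dag$ (implicitly using that $M^\dag$ is again Cohen-Macaulay, so $\Ext_R^{>0}(M^\dag,\omega)=0$), which is the same duality proved by hand.
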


\begin{proof}
Let $X$ be any $R$-module in $\X$.
Then $X$ is Cohen-Macaulay, and we have
\begin{align*}
\RHom_R(M^\dag,X^\dag) & \cong \RHom_R(M^\dag,\RHom_R(X,\omega))
\cong \RHom_R(X,\RHom_R(M^\dag,\omega)) \\
& \cong \RHom_R(X,(M^\dag)^\dag) \cong \RHom_R(X,M)
\end{align*}
in the derived category of $\mod R$.
Since $M$ is in $\X\rp$, we obtain $\Ext_R^i(M^\dag,X^\dag)\cong\Ext_R^i(X,M)=0$ for every $i>0$.
Thus $M^\dag$ belongs to $\lp(\X^\dag)$.

If $\omega\in\X$, then $\X^\dag$ contains $\omega^\dag\cong R$.
We have $\Ext_R^i(M^\dag,R)=0$ for every $i>0$.
\end{proof}

Now we can prove a result on determination of contravariantly finite resolving subcategories over a Cohen-Macaulay Henselian local ring.

\begin{thm}\label{lp}
Let $R$ be a Cohen-Macaulay Henselian local ring with a canonical module $\omega$.
Let $\X$ be a contravariantly finite resolving subcategory of $\mod R$ contained in $\CM(R)$ containing $\omega$.
Then $\X$ coincides with either $\add R$ or $\CM(R)$.
\end{thm}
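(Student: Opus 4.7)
The plan is to prove the contrapositive: assume $\X \ne \add R$, and show $\X = \CM(R)$. Fix an arbitrary Cohen-Macaulay $R$-module $M$. Since $\X$ is contravariantly finite and resolving and $R$ is Henselian, Lemma \ref{compos}(3) provides an exact sequence
$$
0 \to Y \to X \to M \to 0
$$
with $X \in \X$ and $Y \in \X^\perp$. If $Y = 0$ we are done, so assume $Y \ne 0$. A depth-lemma computation using $\depth X = \depth M = d$ gives $\depth Y = d$, so $Y$ is Cohen-Macaulay; hence $Y \in \X\rp$.

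The heart of the argument is the claim that $Y \in \add \omega$. Granting this and writing $Y \cong \omega^{\oplus n}$, applying $\Hom_R(-, \omega)$ to the short exact sequence yields
$$
0 \to M^\dag \to X^\dag \to R^{\oplus n} \to 0
$$
(using the vanishing $\Ext_R^1(M, \omega) = 0$ since $M \in \CM(R)$). This splits because $R^{\oplus n}$ is projective, giving $X^\dag \cong M^\dag \oplus R^{\oplus n}$. Applying $\dag$ a second time and invoking reflexivity $N^{\dag\dag} \cong N$ for $N \in \CM(R)$, we obtain $X \cong M \oplus \omega^{\oplus n}$; since $\X$ is closed under direct summands and contains $X$, it contains $M$, as desired.

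It remains to prove $Y \in \add \omega$. The strategy is to pass to the dual: by Lemma \ref{perpd}, $Y^\dag \in \lp(\X^\dag)$ (since $Y \in \X\rp$), and because $\omega \in \X$, we also have $\Ext_R^i(Y^\dag, R) = 0$ for every $i > 0$. Showing $Y \in \add \omega$ is thus equivalent, via the duality $\dag$, to showing that $Y^\dag$ is free. For this I would (i) form a right $\X$-approximation $0 \to Y' \to X' \to Y^\dag \to 0$ of the Cohen-Macaulay module $Y^\dag$, with $X' \in \X$ and $Y' \in \X\rp$ Cohen-Macaulay, (ii) dualize via $\dag$ to obtain an exact sequence $0 \to Y \to X'^{\dag} \to Y'^{\dag} \to 0$ that feeds information back into the original picture, and (iii) combine the Ext-vanishing furnished by Lemma \ref{perpd} with the existence of a nonfree module in $\X$ (together with the Krull--Schmidt property afforded by the Henselian hypothesis) to exclude any nonfree indecomposable summand of $Y^\dag$.

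The principal obstacle is step (iii): extracting enough freeness information from the Ext-vanishing $\Ext_R^i(Y^\dag, N) = 0$ for all $N \in \X^\dag$ and all $i>0$. This is where the hypothesis $\omega \in \X$ is essential, for it guarantees $R = \omega^\dag \in \X^\dag$ and therefore the G-dimension-zero-type vanishing $\Ext_R^i(Y^\dag, R) = 0$; without it, the Lemma \ref{perpd} machinery would not yield the key input. The Henselian hypothesis plays a double role, entering both through Lemma \ref{compos}(3) (producing the initial approximation with kernel in $\X^\perp$) and through Krull--Schmidt (enabling the indecomposable decomposition of $Y^\dag$ that rules out any nonfree summand).
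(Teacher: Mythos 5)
Your reduction to the claim $Y \in \add\omega$ matches the paper exactly: get $0\to Y\to X\to M\to 0$ via Lemma \ref{compos}(3), show $Y$ is Cohen--Macaulay and hence in $\X\rp$, feed it through Lemma \ref{perpd}, and then observe that once $Y\in\add\omega$ the sequence splits, giving $M\in\X$. Up to that point, and in the treatment of the split case, you are tracking the paper's argument faithfully.

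The gap is precisely the ``principal obstacle'' you flag, and unfortunately your sketched steps (i)--(iii) do not close it. The vanishing $\Ext_R^i(Y^\dag,R)=0$ for all $i>0$ says (together with the reflexivity that comes from $Y^\dag\in\CM(R)$) that $Y^\dag$ is a totally reflexive module, and totally reflexive modules are, in general, very far from free --- over a Gorenstein local ring \emph{every} Cohen--Macaulay module has this property. Krull--Schmidt plus the existence of some nonfree module in $\X$ cannot, by itself, exclude a nonfree indecomposable summand of $Y^\dag$: there is no local contradiction to derive without further structure. Likewise, forming a right $\X$-approximation of $Y^\dag$ and dualizing doesn't obviously produce anything new, because what you get back is again a short exact sequence involving a module in $\X\rp$, and you are going in circles.

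What the paper actually does is considerably more global. Its proof has a substantial preliminary step that you omit entirely: it first proves that $\lp(\X^\dag)$ is itself a \emph{contravariantly finite} resolving subcategory of $\mod R$. This is not formal; the paper constructs right $\lp(\X^\dag)$-approximations by starting from an approximation of $(\syz M)^\dag$, dualizing, and splicing with $0\to\syz M\to R^{\oplus n}\to M\to 0$ via a pushout whose middle row splits thanks to $\Ext^i(Y^\dag,R)=0$. Once contravariant finiteness of $\lp(\X^\dag)$ is in hand, the paper invokes \cite[Theorem 1.4]{arg}, which is the real engine: for a module $Y^\dag\in\lp(\X^\dag)$ with $\Ext_R^i(Y^\dag,R)=0$ for all $i>0$, either $\lp(\X^\dag)$ is all of $\mod R$ (or $\CM(R)$), or $Y^\dag$ has finite projective dimension. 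The first alternative forces $\syz^dk\in\lp(\X^\dag)$, hence $\Ext^{i+d}(k,Z^\dag)=0$ for $Z\in\X$, hence $Z^\dag$ has finite injective dimension and so $Z^\dag\in\add\omega$, i.e.\ $Z$ is free --- contradicting $\X\ne\add R$. The second alternative, plus $Y^\dag$ Cohen--Macaulay, gives $Y^\dag$ free, which is your claim. Without the contravariant finiteness of $\lp(\X^\dag)$ and without a replacement for \cite[Theorem 1.4]{arg}, your argument cannot conclude, so this is where you would need to do the real work.
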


\begin{proof}
(1) First, we prove that $\lp(\X^\dag)$ is a contravariantly finite resolving subcategory of $\mod R$.
The resolving property of $\lp(\X^\dag)$ is easily verified by using Lemma \ref{compos}(1) and \cite[Example 1.6(3)]{stcm}.
By Lemma \ref{compos}(2)(4), it is enough to show that each $M\in\CM(R)$ admits a right $\lp(\X^\dag)$-approximation.
Lemma \ref{compos}(3) gives an exact sequence
$$
0 \to Y \to X \to (\syz M)^\dag \to 0
$$
with $X\in\X$ and $Y\in\X^\perp$.
As $X$ and $(\syz M)^\dag$ are Cohen-Macaulay, so is $Y$, and hence $Y\in\X\rp$.
Lemma \ref{perpd} implies $Y^\dag\in\lp(\X^\dag)$.
Dualizing the above exact sequence by $\omega$, we get an exact sequence $0 \to \syz M \to X^\dag \to Y^\dag \to 0$.
Also, there is an exact sequence $0 \to \syz M \to R^{\oplus n} \to M \to 0$.
We make the following pushout diagram.
$$
\begin{CD}
@. 0 @. 0 \\
@. @VVV @VVV \\
0 @>>> \syz M @>>> X^\dag @>>> Y^\dag @>>> 0 \\
@. @VVV @VVV @| \\
0 @>>> R^{\oplus n} @>>> Z @>>> Y^\dag @>>> 0 \\
@. @VVV @VVV \\
@. M @= M \\
@. @VVV @VVV \\
@. 0 @. 0
\end{CD}
$$
Lemma \ref{perpd} gives $\Ext_R^i(Y^\dag,R)=0$ for all $i>0$.
Hence the middle row in the above diagram splits, and $Z$ is isomorphic to $Y^\dag\oplus R^{\oplus n}\in\lp(\X^\dag)$.
We get an exact sequence
$$
0 \to X^\dag \to Y^\dag\oplus R^{\oplus n} \overset{\phi}{\to} M \to 0.
$$
Let $W$ be any $R$-module in $\lp(\X^\dag)$.
Then there is an exact sequence
$$
\begin{CD}
\Hom_R(W,Y^\dag\oplus R^{\oplus n}) @>{\Hom_R(W,\phi)}>> \Hom_R(W,M) \to \Ext_R^1(W,X^\dag).
\end{CD}
$$
Since $X^\dag\in\X^\dag$ and $W\in\lp(\X^\dag)$, we have $\Ext_R^1(W,X^\dag)=0$, and the map $\Hom_R(W,\phi)$ is surjective.
Therefore the homomorphism $\phi$ is a right $\lp(\X^\dag)$-approximation.
Consequently, $\lp(\X^\dag)$ is a contravariantly finite resolving subcategory of $\mod R$.

(2) Now, let us prove that either $\X=\add R$ or $\X=\CM(R)$ holds true.
Assume that $\X$ is different from $\add R$.
We want to show that $\X$ coincides with $\CM(R)$.
Let $M$ be a Cohen-Macaulay $R$-module.
Then by Lemma \ref{compos}(3) there exists an exact sequence
\begin{equation}\label{yxm}
0 \to Y \to X \to M \to 0
\end{equation}
of $R$-modules with $X\in\X$ and $Y\in\X^\perp$.
As $X$ and $M$ are Cohen-Macaulay, so is $Y$, and we have $Y\in\X\rp$.
Lemma \ref{perpd} implies that $Y^\dag$ belongs to $\lp(\X^\dag)$ and that $\Ext_R^i(Y^\dag,R)=0$ for every $i>0$.
Thus, by \cite[Theorem 1.4]{arg}, one of the following two statements holds.
\begin{enumerate}[(i)]
\item
As a subcategory of $\mod R$, $\lp(\X^\dag)$ coincides with either $\mod R$ or $\CM(R)$.
\item
The $R$-module $Y^\dag$ has finite projective dimension.
\end{enumerate}

Suppose that the statement (i) holds.
Then $\syz^dk$ belongs to $\lp(\X^\dag)$.
Let $Z$ be a module in $\X$.
Then we have $0=\Ext_R^i(\syz^dk,Z^\dag)\cong\Ext_R^{i+d}(k,Z^\dag)$ for all $i>0$, which implies that the Cohen-Macaulay $R$-module $Z^\dag$ has finite injective dimension.
Hence $Z^\dag$ is isomorphic to a direct sum of copies of $\omega$, and therefore $Z$ is free.
Thus we have $\X=\add R$, which contradicts our assumption.

Consequently, the statement (i) cannot hold true, and the statement (ii) must hold.
Since $Y^\dag$ is a Cohen-Macaulay $R$-module, it is free.
Hence $Y$ is isomorphic to a direct sum of copies of $\omega$.
Then the exact sequence \eqref{yxm} splits, and $M$ is isomorphic to a direct summand of $X$.
As $\X$ is closed under direct summands, $M$ belongs to $\X$.
Now we conclude that $\X$ coincides with $\CM(R)$.
\end{proof}

Next we want to exclude the Henselian assumption on $R$ from Theorem \ref{lp}.
For this, we need to lift the contravariant finite and resolving properties of a subcategory of $\mod R$ to the completion of $R$.
We consider this in the following two results.

Let $R\to S$ be a flat homomorphism of local rings.
For a subcategory $\X$ of $\mod R$, we denote by $\X\otimes_RS$ the subcategory of $\mod S$ consisting of all $S$-modules of the form $X\otimes_RS$ with $X\in\X$.
Note that if $\X$ is closed under direct sums, then so is $\X\otimes_RS$.

\begin{lem}\label{nagai}
\begin{enumerate}[\rm (1)]
\item
Let $\X$ be a subcategory of $\mod R$.
Let $\phi:X\to M$ be a right $\X$-approximation, and let $\pi:M\to N$ be a split epimorphism of $R$-modules.
Then $\pi\phi:X\to N$ is a right $\X$-approximation.
\item
Let $R\to S$ be a flat (not necessarily local) homomorphism of local rings.
Let $\X$ be a subcategory of $\mod R$ closed under direct sums.
If $\phi:X\to M$ is a right $\X$-approximation, then $\phi\otimes_RS:X\otimes_RS\to M\otimes_RS$ is a right $\add_S(\X\otimes_RS)$-approximation.
\end{enumerate}
\end{lem}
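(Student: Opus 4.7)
The plan is to handle the two parts separately.

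For part (1), I would fix a section $\sigma:N\to M$ of the split epimorphism $\pi$. Given any homomorphism $f:X'\to N$ with $X'\in\X$, the map $\sigma f:X'\to M$ factors through $\phi$ by its right $\X$-approximation property; write $\sigma f=\phi g$ for some $g:X'\to X$. Composing with $\pi$ gives $f=\pi\sigma f=\pi\phi g$, which exhibits the required factorization through $\pi\phi$. This is a one-diagram verification with no hidden difficulty.

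For part (2), I would proceed in two steps. First, reduce to showing that every $S$-homomorphism from $X'\otimes_R S$ (with $X'\in\X$) to $M\otimes_R S$ factors through $\phi\otimes_R S$. Any $W\in\add_S(\X\otimes_R S)$ is a direct summand of $(X_1\oplus\cdots\oplus X_n)\otimes_R S$ for some $X_1,\dots,X_n\in\X$, and closure of $\X$ under direct sums places $X':=X_1\oplus\cdots\oplus X_n$ in $\X$. For any $\psi:W\to M\otimes_R S$ I extend $\psi$ along the split inclusion $\iota:W\hookrightarrow X'\otimes_R S$ to $\tilde\psi:=\psi\pi':X'\otimes_R S\to M\otimes_R S$, where $\pi'$ is the split projection; a factorization of $\tilde\psi$ through $\phi\otimes_R S$ restricts via $\iota$ to one of $\psi$.

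The second step is the main one. Because $X'$ is finitely presented (being a finitely generated module over the Noetherian ring $R$) and $S$ is $R$-flat, the canonical map
$$
\Hom_R(X',M)\otimes_R S \xrightarrow{\;\sim\;} \Hom_S(X'\otimes_R S,\,M\otimes_R S)
$$
is an isomorphism. Under it, $\tilde\psi$ corresponds to an element $\sum_i h_i\otimes s_i$ with $h_i\in\Hom_R(X',M)$ and $s_i\in S$. By the right $\X$-approximation property of $\phi$, each $h_i$ factors as $\phi g_i$ for some $g_i:X'\to X$, and then $g:=\sum_i g_i\otimes s_i\in\Hom_S(X'\otimes_R S,X\otimes_R S)$ satisfies $(\phi\otimes_R S)\circ g=\tilde\psi$ under the base change identification, as required.

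The main obstacle I anticipate is the bookkeeping in part (2): combining the $\add$-closure reduction with the flat base change identification of Hom and the termwise use of the approximation property. None of these ingredients is deep individually, but each must be kept straight so that the factorization really lives over $X\otimes_R S$ rather than merely over $M\otimes_R S$.
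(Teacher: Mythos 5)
Your proposal is correct and follows essentially the same route as the paper: in (1) you compose with a section of $\pi$ and factor through $\phi$, and in (2) you use direct-sum closure to reduce to a summand of some $X'\otimes_RS$ with $X'\in\X$, then apply the flat base change isomorphism $\Hom_R(X',M)\otimes_RS\cong\Hom_S(X'\otimes_RS,M\otimes_RS)$ and factor each term through $\phi$, exactly as in the paper's argument. Your explicit remark that the Hom isomorphism needs $X'$ finitely presented and $S$ flat is a welcome clarification but not a deviation.
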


\begin{proof}
(1) There is an $R$-homomorphism $\theta:N\to M$ with $\pi\theta=1$.
Let $f:X'\to N$ be an $R$-homomorphism with $X'\in\X$.
Then $\theta f:X'\to M$ factors through $\phi$, namely, the equality $\theta f=\phi g$ holds for some $R$-homomorphism $g:X'\to X$.
We have $f=\pi\theta f=\pi\phi g$, whence $f$ factors through $\pi\phi$.

(2) Let $Y$ be an $S$-module in $\add_S(\X\otimes S)$, and let $f:Y\to M\otimes S$ be an $S$-homomorphism.
Then there are modules $X'\in\X$ and $Z\in\mod S$ such that $Y\oplus Z$ is isomorphic to $X'\otimes S$.
We have homomorphisms $\theta:Y\to X'\otimes S$ and $\pi:X'\otimes S\to Y$ of $S$-modules with $\pi\theta=1$.
The homomorphism $f\pi$ is in $\Hom_S(X'\otimes_RS,M\otimes_RS)\cong\Hom_R(X',M)\otimes_RS$, and we can write $f\pi=\sum_{j=1}^mg_j\otimes s_j$ for some $g_j\in\Hom_R(X',M)$ and $s_j\in S$.
Since $\phi$ is a right $\X$-approximation, there is an $R$-homomorphism $h_j:X'\to X$ such that $g_j=\phi h_j$ for $1\le j\le m$.
We have equalities
$$
f=f\pi\theta=\left(\sum_{j=1}^m(\phi h_j)\otimes s_j\right)\theta=(\phi\otimes S)\left(\sum_{j=1}^mh_j\otimes s_j\right)\theta.
$$
Hence $f$ factors through $\phi\otimes S$.
\end{proof}

Let $\widehat R$ be the completion of $R$ in the $\m$-adic topology.
For a subcategory $\X$ of $\mod R$, we denote by $\widehat\X$ the subcategory of $\mod\widehat R$ consisting of all $\widehat R$-modules of the form $\widehat X$ with $X\in\X$.
This can be identified as $\X\otimes_R\widehat R$.
When $R$ is a Cohen-Macaulay local ring, we denote by $\CM_0(R)$ the subcategory of $\CM(R)$ consisting of all Cohen-Macaulay $R$-modules that are locally free on the punctured spectrum of $R$.

\begin{prop}\label{asda}
\begin{enumerate}[\rm (1)]
\item
Let $\X$ be a resolving subcategory of $\mod R$.
Assume that all $R$-modules in $\X$ are locally free on the punctured spectrum of $R$.
Then $\add_{\widehat R}\widehat\X$ is a resolving subcategory of $\mod\widehat R$.
\item
Let $R$ be a Cohen-Macaulay local ring, and let $\X$ be a resolving subcategory of $\mod R$ contained in $\CM_0(R)$.
Then the following hold.
\begin{enumerate}[\rm (i)]
\item
The subcategory $\add_{\widehat R}\widehat\X$ of $\mod\widehat R$ is contained in $\CM_0(\widehat R)$.
\item
If every module in $\CM_0(R)$ admits a right $\X$-approximation, then every module in $\CM_0(\widehat R)$ admits a right $\add_{\widehat R}\widehat\X$-approximation
\end{enumerate}
\end{enumerate}
\end{prop}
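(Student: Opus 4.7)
The plan is to verify each part using that $R\to\widehat R$ is a faithfully flat local extension, and to exploit the fact that for modules locally free on the punctured spectrum, Ext groups have finite length.

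For (1), I will check the resolving axioms for $\add_{\widehat R}\widehat\X$. Containing $\widehat R$ and closure under direct summands are immediate. For closure under syzygies, since $R\to\widehat R$ is faithfully flat local, the $\widehat R$-completion of a minimal free resolution of $X\in\X$ is a minimal free resolution of $\widehat X$, so $\syz_{\widehat R}\widehat X=\widehat{\syz_R X}\in\widehat\X$; direct summands reduce to this case. For closure under extensions, take $0\to A\to B\to C\to 0$ with $A,C\in\add_{\widehat R}\widehat\X$, write $A\oplus A'=\widehat{X_A}$ and $C\oplus C'=\widehat{X_C}$ for some $X_A,X_C\in\X$, and combine with the obvious split sequences to obtain an exact sequence $0\to\widehat{X_A}\to B'\to\widehat{X_C}\to 0$ in which $B$ is a direct summand of $B'$. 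The key step is that $\Ext_R^1(X_C,X_A)$ has finite length, because $X_C$ is locally free on the punctured spectrum (so the localizations of this Ext module away from $\m$ vanish), and hence it agrees with its base change along $R\to\widehat R$; together with flatness of $\widehat R$ over $R$ and finite presentation of $X_C$, this gives $\Ext_R^1(X_C,X_A)\cong\Ext_{\widehat R}^1(\widehat{X_C},\widehat{X_A})$. Thus the extension class realizing $B'$ lifts to an $R$-extension $0\to X_A\to B_0\to X_C\to 0$ with $B_0\in\X$, whence $B'\cong\widehat{B_0}\in\widehat\X$ and $B\in\add_{\widehat R}\widehat\X$.

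For (2)(i), given $X\in\X\subseteq\CM_0(R)$, the module $\widehat X$ is Cohen-Macaulay since depth is preserved under faithfully flat extension. To see $\widehat X$ is locally free on the punctured spectrum of $\widehat R$, observe that the only prime of $\widehat R$ lying over $\m$ is $\widehat\m$ itself (any such prime contains $\m\widehat R=\widehat\m$), so for $\widehat\p\ne\widehat\m$ the contraction $\widehat\p\cap R$ is non-maximal; then $X_{\widehat\p\cap R}$ is free over $R_{\widehat\p\cap R}$, and by base change $(\widehat X)_{\widehat\p}$ is free over $\widehat R_{\widehat\p}$. Since $\CM_0(\widehat R)$ is closed under direct summands, $\add_{\widehat R}\widehat\X\subseteq\CM_0(\widehat R)$.

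For (2)(ii), the plan is an algebraization-reduction. Given $N\in\CM_0(\widehat R)$, I would first produce an $M\in\CM_0(R)$ such that $N$ is a direct summand of $\widehat M$. With such an $M$ in hand, the hypothesis supplies a right $\X$-approximation $\phi\colon X\to M$; by Lemma \ref{nagai}(2), the induced map $\widehat\phi\colon\widehat X\to\widehat M$ is then a right $\add_{\widehat R}\widehat\X$-approximation of $\widehat M$, and by Lemma \ref{nagai}(1), composing with the split epimorphism $\widehat M\twoheadrightarrow N$ produces a right $\add_{\widehat R}\widehat\X$-approximation of $N$. The main obstacle is precisely this algebraization step, i.e., realizing every Cohen-Macaulay $\widehat R$-module locally free on the punctured spectrum as a direct summand of the completion of an $R$-module of the same type; this is the place where I expect the paper to invoke a separate lifting lemma of Artin-approximation or Elkik-algebraization flavor, after which the approximation statement follows formally from Lemma \ref{nagai}.
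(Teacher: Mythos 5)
Your argument tracks the paper's in all three parts: the finite-length/completeness isomorphism $\Ext_R^1(X_C,X_A)\cong\Ext_{\widehat R}^1(\widehat{X_C},\widehat{X_A})$ to descend the extension class in (1), the lying-over observation ($\q\cap R$ nonmaximal for $\q\ne\widehat\m$) plus flat base change in (2)(i), and the reduction via Lemma~\ref{nagai}(1)(2) in (2)(ii). The algebraization step you correctly flag in (2)(ii) --- that each module in $\CM_0(\widehat R)$ is a direct summand of $\widehat N$ for some $N\in\CM_0(R)$ --- is exactly what the paper cites, namely \cite[Corollary 3.3]{stcm}, after which the argument proceeds precisely as you describe.
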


\begin{proof}
(1) Clearly, $\add_{\widehat R}\widehat\X$ is closed under direct summands and contains $\widehat R$.
Let $M$ be an $\widehat R$-module in $\add_{\widehat R}\widehat\X$.
Then $M$ is isomorphic to a direct summand of $\widehat X$ for some $X\in\X$.
Hence $\syz_{\widehat R}M$ is isomorphic to a direct summand of $\syz_{\widehat R}\widehat X\cong\widehat{\syz_RX}$.
Since $\syz_RX$ is in $\X$, the module $\syz_{\widehat R}M$ is in $\add_{\widehat R}\widehat\X$.
Therefore $\add_{\widehat R}\widehat\X$ is closed under syzygies.
To check the closedness under extensions, let $0 \to L \to M \to N \to 0$ be an exact sequence of $\widehat R$-modules with $L,N\in\add_{\widehat R}\widehat\X$.
Then we have isomorphisms $L\oplus L'\cong\widehat X$ and $N\oplus N'\cong\widehat Y$ for some $\widehat R$-modules $L',N'$ and some $R$-modules $X,Y\in\X$.
Taking the direct sum of the above exact sequence and trivial exact sequences $0 \to L\overset{=}{\to} L \to 0 \to 0$ and $0 \to 0 \to N \overset{=}{\to} N \to 0$, we have an exact sequence
$$
\sigma: 0 \to \widehat X \to M' \to \widehat Y \to 0
$$
of $\widehat R$-modules, where $M'=M\oplus L'\oplus N'$.
This corresponds to an element of $\Ext_{\widehat R}^1(\widehat Y,\widehat X)$.
Since $Y$ is locally free on the punctured spectrum of $R$, the $R$-module $\Ext_R^1(Y,X)$ has finite length.
Hence it is a complete $R$-module, and the natural map $\Ext_R^1(Y,X)\to\Ext_{\widehat R}^1(\widehat Y,\widehat X)$ is an isomorphism.
This shows that there exists an exact sequence
$$
\tau: 0 \to X \to Z \to Y \to 0
$$
of $R$-modules such that the exact sequence $\widehat\tau: 0 \to \widehat X \to \widehat Z \to \widehat Y \to 0$ of $\widehat R$-modules is equivalent to $\sigma$ as an extension of $\widehat Y$ by $\widehat X$, whence $\widehat Z$ is isomorphic to $M'$.
Since the subcategory $\X$ of $\mod R$ is closed under extensions, the module $Z$ is in $\X$, and so $M$ belongs to $\add_{\widehat R}\widehat\X$.
Consequently, $\add_{\widehat R}\widehat\X$ is closed under extensions as a subcategory of $\mod\widehat R$.
Thus we conclude that $\add_{\widehat R}\widehat\X$ is a resolving subcategory of $\mod\widehat R$.

(2)(i) We have only to show that $\widehat{X}_\q$ is $\widehat{R}_\q$-free for every $X\in\X$ and every nonmaximal prime ideal $\q$ of $\widehat R$.
Putting $\p=\q\cap R$, we observe that $\p$ is a nonmaximal prime ideal of $R$ (cf. \cite[Theorem 15.1]{M}).
Hence $X_\p$ is $R_\p$-free.
Since there are isomorphisms $\widehat{X}_\q\cong X\otimes_R\widehat{R}_\q\cong X_\p\otimes_{R_\p}\widehat{R}_\q$, the module $\widehat{X}_\q$ is $\widehat{R}_\q$-free.

(ii) Let $M$ be an $\widehat R$-module in $\CM_0(\widehat R)$.
It follows from \cite[Corollary 3.3]{stcm} that $M$ is isomorphic to a direct summand of $\widehat N$ for some $N\in\CM_0(R)$.
Hence there exists a split epimorphism $\pi:\widehat N\to M$ of $\widehat R$-modules.
By assumption, there is a right $\X$-approximation $\phi:X\to N$.
Lemma \ref{nagai}(2) implies that the completion $\widehat\phi:\widehat X\to\widehat N$ is a right $\add_{\widehat R}\widehat\X$-approximation.
Then the composite map $\pi\widehat\phi:\widehat X\to M$ is also a right $\add_{\widehat R}\widehat\X$-approximation by Lemma \ref{nagai}(1).
\end{proof}

Now we can prove the following theorem.
We can actually exclude from Theorem \ref{lp} the assumption that $R$ is Henselian, if instead we assume that the completion of $R$ has an isolated singularity.

\begin{thm}\label{addorcm}
Let $R$ be a Cohen-Macaulay local ring with a canonical module $\omega$.
Suppose that $\widehat R$ has an isolated singularity.
Let $\X$ be a contravariantly finite resolving subcategory of $\mod R$ contained in $\CM(R)$ containing $\omega$.
Then $\X$ is either $\add R$ or $\CM(R)$.
\end{thm}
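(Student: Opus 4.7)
The plan is to reduce the result to Theorem \ref{lp} by passing to the $\m$-adic completion $\widehat R$, which is Henselian, and descending the dichotomy back to $R$ via faithful flatness. The first ingredient is the observation that the isolated singularity hypothesis on $\widehat R$ forces $\CM(R)=\CM_0(R)$: given $M\in\CM(R)$ and a non-maximal prime $\p\subset R$, lift $\p$ to a non-maximal prime $\q\subset\widehat R$ with $\q\cap R=\p$; then $\widehat R_\q$ is regular, so the Cohen-Macaulay module $\widehat M_\q\cong M_\p\otimes_{R_\p}\widehat R_\q$ is free. Since $R_\p\to\widehat R_\q$ is a flat local homomorphism of local rings, it is faithfully flat, and freeness descends to $M_\p$. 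In particular $\X$ is contained in $\CM_0(R)$, so Proposition \ref{asda} applies.

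Next I would form $\add_{\widehat R}\widehat\X$ and verify that it is a contravariantly finite resolving subcategory of $\mod\widehat R$ contained in $\CM(\widehat R)$ and containing $\widehat\omega$. Proposition \ref{asda}(1) and (2)(i) give the resolving property and the inclusion $\add_{\widehat R}\widehat\X\subseteq\CM_0(\widehat R)=\CM(\widehat R)$, while $\widehat\omega\in\add_{\widehat R}\widehat\X$ follows from $\omega\in\X$. Contravariant finiteness of $\X$ yields right $\X$-approximations of every module in $\CM_0(R)$, whence by Proposition \ref{asda}(2)(ii) every module in $\CM(\widehat R)$ admits a right $\add_{\widehat R}\widehat\X$-approximation; composing with right $\CM(\widehat R)$-approximations (Lemma \ref{compos}(4)) via Lemma \ref{compos}(2) extends this to all of $\mod\widehat R$. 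Since $\widehat R$ is Henselian with canonical module $\widehat\omega$, Theorem \ref{lp} applies and gives $\add_{\widehat R}\widehat\X=\add\widehat R$ or $\add_{\widehat R}\widehat\X=\CM(\widehat R)$.

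In the first case, the completion $\widehat X$ of every $X\in\X$ is $\widehat R$-free, and faithfully flat descent of freeness along $R\to\widehat R$ gives $X\in\add R$; thus $\X=\add R$. In the second case, let $M\in\CM(R)$ and choose a surjective right $\X$-approximation $\phi\colon X\to M$ (obtained, if necessary, by adjoining a free cover $R^{\oplus n}\to M$). The completion of the exact sequence $0\to K\to X\to M\to 0$ is $0\to\widehat K\to\widehat X\to\widehat M\to 0$, and by Lemma \ref{nagai}(2) the map $\widehat\phi$ is a right $\add_{\widehat R}\widehat\X=\CM(\widehat R)$-approximation; since $\widehat M\in\CM(\widehat R)$, the identity of $\widehat M$ factors through $\widehat\phi$, making the completed sequence split. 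The extension class therefore vanishes in $\Ext_{\widehat R}^1(\widehat M,\widehat K)\cong\Ext_R^1(M,K)\otimes_R\widehat R$, and by faithful flatness the original class in $\Ext_R^1(M,K)$ already vanishes; so the original sequence splits, $M$ is a direct summand of $X\in\X$, and $M\in\X$. Together with $\X\subseteq\CM(R)$ this yields $\X=\CM(R)$.

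The main obstacle I anticipate is the careful handling of the transfer between $R$ and $\widehat R$: confirming $\CM(R)=\CM_0(R)$ from the isolated singularity hypothesis, and the two faithfully flat descent steps (for freeness in Case 1 and for the vanishing of an $\Ext$ class in Case 2). Both rest on standard but essential facts, namely that a flat local homomorphism of local rings is faithfully flat, and that $\Ext$ commutes with flat base change when the first argument is finitely generated.
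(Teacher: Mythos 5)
Your proposal is correct and follows the same overall route as the paper: pass to the completion, show $\add_{\widehat R}\widehat\X$ is a contravariantly finite resolving subcategory of $\mod\widehat R$ via Proposition \ref{asda} and Lemma \ref{compos}(2)(4), invoke Theorem \ref{lp} over the Henselian ring $\widehat R$, and descend the dichotomy. The one genuine difference is in the case $\add_{\widehat R}\widehat\X=\CM(\widehat R)$: the paper does not treat an arbitrary $M\in\CM(R)$ directly, but reduces (via \cite[Corollary 2.7]{stcm}) to showing $\syz^dk\in\X$, and descends the splitting of the completed approximation sequence by observing that $\Ext_R^1(\syz^dk,L)$ has finite length, hence is complete, so $\Ext_R^1(\syz^dk,L)\to\Ext_{\widehat R}^1(\widehat{\syz^dk},\widehat L)$ is an isomorphism. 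You instead run the splitting argument for every $M\in\CM(R)$ at once, descending the vanishing of the extension class through the flat base change isomorphism $\Ext_R^1(M,K)\otimes_R\widehat R\cong\Ext_{\widehat R}^1(\widehat M,\widehat K)$ together with the injectivity of $N\to N\otimes_R\widehat R$ for the faithfully flat map $R\to\widehat R$; this is valid (as $M$ is finitely generated over a Noetherian ring) and spares you both the citation and the finite-length argument, at the price of invoking general faithfully flat descent of $\Ext$ classes. Likewise, your direct verification that $\CM(R)=\CM_0(R)$ replaces the paper's appeal to \cite[Proposition 3.4]{stcm}; both are sound.
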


\begin{proof}
Since $\widehat R$ has an isolated singularity, so does $R$ (cf. \cite[Proposition 3.4]{stcm}).
Hence we have $\CM(\widehat R)=\CM_0(\widehat R)$ and $\CM(R)=\CM_0(R)$.
Propositions \ref{asda}(2), \ref{asda}(1) and Lemma \ref{compos}(2)(4) show that $\add_{\widehat R}\widehat\X$ is a contravariantly finite resolving subcategory of $\mod\widehat R$.
Since $\widehat R$ is Henselian, it follows from Theorem \ref{lp} that $\add_{\widehat R}\widehat\X$ coincides with either $\add_{\widehat R}\widehat R$ or $\CM(\widehat R)$.
In the former case we have $\X=\add R$, so let us consider the case where $\add_{\widehat R}\widehat\X=\CM(\widehat R)$ holds.
We want to prove that $\X=\CM(R)$.
According to \cite[Corollary 2.7]{stcm}, it is enough to show that $\syz^dk\in\X$.
By the contravariant finiteness of $\X$, there is a right $\X$-approximation $\phi:X\to\syz^dk$.
There is a surjective homomorphism from a free $R$-module to $\syz^dk$, and it factors through $\phi$.
Hence $\phi$ is surjective, and we have an exact sequence
$$
\sigma: 0 \to L \to X \overset{\phi}{\to} \syz^dk \to 0
$$
of $R$-modules.
Taking the completion gives an exact sequence
$$
\widehat\sigma: 0 \to \widehat L \to \widehat X \overset{\widehat\phi}{\to} \widehat{\syz^dk} \to 0.
$$
Lemma \ref{nagai}(2) says that $\widehat\phi$ is a right $\add_{\widehat R}\widehat\X$-approximation.
Note that $\widehat{\syz^dk}$ is in $\CM(\widehat R)=\add_{\widehat R}\widehat\X$.
Hence the identity map $\id_{\widehat{\syz^dk}}:\widehat{\syz^dk}\to\widehat{\syz^dk}$ factors through $\widehat\phi$, which implies that $\widehat\phi$ is a split epimorphism.
We have $\widehat\sigma=0$ in $\Ext_{\widehat R}^1(\widehat{\syz^dk},\widehat L)$.
As the $R$-module $\Ext_R^1(\syz^dk,L)$ has finite length, the natural map $\Ext_R^1(\syz^dk,L)\to\Ext_{\widehat R}^1(\widehat{\syz^dk},\widehat L)$ is an isomorphism.
Hence $\sigma=0$ in $\Ext_R^1(\syz^dk,L)$.
Thus $\syz^dk$ is isomorphic to a direct summand of $X\in\X$, and $\syz^dk$ belongs to $\X$.
\end{proof}

\begin{cor}\label{wasur}
Let $R$ be a Cohen-Macaulay local ring admitting a canonical module $\omega$.
Assume that $R$ is of finite Cohen-Macaulay representation type and that the completion $\widehat R$ has an isolated singularity.
Let $\X$ be a resolving subcategory of $\mod R$ contained in $\CM(R)$ containing $\omega$.
Then one has either $\X=\add R$ or $\X=\CM(R)$.
\end{cor}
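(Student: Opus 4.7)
The plan is to reduce the claim to Theorem \ref{addorcm} by showing that the finite Cohen-Macaulay representation type hypothesis automatically upgrades $\X$ to a contravariantly finite subcategory. Once this is established, the conclusion $\X=\add R$ or $\X=\CM(R)$ is immediate from Theorem \ref{addorcm}, since $\X$ already satisfies the remaining hypotheses (resolving, contained in $\CM(R)$, containing $\omega$, with $\widehat R$ having an isolated singularity).

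First, I would exploit the finite representation type hypothesis. Since there are only finitely many non-isomorphic indecomposable Cohen-Macaulay $R$-modules and $\X$ is closed under direct summands, $\X$ itself admits only finitely many non-isomorphic indecomposable objects, say $X_1,\dots,X_n$. Setting $G:=X_1\oplus\cdots\oplus X_n$, every object of $\X$ is isomorphic to a direct summand of some $G^{\oplus k}$, so $\X=\add_R G$.

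Second, I would verify that $\add_R G$ is contravariantly finite in $\mod R$. Given $M\in\mod R$, the module $\Hom_R(G,M)$ is finitely generated over $R$, and hence over $E:=\End_R(G)$, since the $R$-action on $\Hom_R(G,M)$ factors through $E$ via $r\mapsto r\cdot\id_G$. Choose a finite generating set $f_1,\dots,f_m$ of $\Hom_R(G,M)$ over $E$, and define $\phi:G^{\oplus m}\to M$ by $(g_1,\dots,g_m)\mapsto\sum_{i=1}^m f_i(g_i)$. Any $\psi:G\to M$ can be written as $\psi=\sum_i f_i\circ h_i$ with $h_i\in E$, and thus factors through $\phi$ via $(h_1,\dots,h_m):G\to G^{\oplus m}$. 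A standard passage through the splitting $X'\oplus X''\cong G^{\oplus k}$ shows the same for any $\psi:X'\to M$ with $X'\in\add_R G$. Hence $\phi$ is a right $\add_R G$-approximation of $M$, proving that $\X$ is contravariantly finite.

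Applying Theorem \ref{addorcm} to $\X$ then yields $\X=\add R$ or $\X=\CM(R)$, as required. The argument is essentially bookkeeping, with Theorem \ref{addorcm} doing all the substantive work; there is no real obstacle, and the only mildly delicate point is the standard observation that $\Hom_R(G,M)$ is finitely generated over the (noncommutative) endomorphism ring $E$, which allows one to build a finite right approximation from finitely many generating homomorphisms.
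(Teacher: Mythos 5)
Your proposal is correct and follows essentially the same route as the paper: use finite Cohen--Macaulay representation type to write $\X=\add_R G$ for a single module $G$, observe that such a subcategory is contravariantly finite, and then invoke Theorem \ref{addorcm}. The only difference is that you write out the standard Auslander--Smal\o{} argument that $\add_R G$ is contravariantly finite (via finite generation of $\Hom_R(G,M)$ over $\End_R(G)$), whereas the paper simply cites (the proof of) \cite[Proposition 4.2]{AS} for this fact.
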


\begin{proof}
Since $R$ has finite Cohen-Macaulay representation type, there exist only a finite number of nonisomorphic indecomposable Cohen-Macaulay $R$-modules.
Let $C_1,\dots,C_n$ be a complete list of them.
We may assume that $C_i$ belongs to $\X$ for $1\le i\le m$ and does not belong to $\X$ for $m+1\le i\le n$.
Then we easily see that $\X=\add(\bigoplus_{i=1}^mC_i)$ holds.
It follows from (the proof of) \cite[Proposition 4.2]{AS} that $\X$ is contravariantly finite.
Now the conclusion follows from Theorem \ref{addorcm}.
\end{proof}

The rest of this section is devoted to the main problem of this paper, that is, the classification problem of resolving subcategories.
Taking advantage of the above corollary, we prove the following proposition, which is the essential part of the classification theorem.

\begin{prop}\label{fcmpr}
Let $R$ be a Cohen-Macaulay local ring with a canonical module $\omega$.
Suppose that for every nonmaximal prime ideal $\p$ the local ring $R_\p$ has finite Cohen-Macaulay representation type whose completion has an isolated singularity.
Let $\X$ be a resolving subcategory of $\mod R$ contained in $\CM(R)$ containing $\omega$ and $\syz^dk$.
Let $M$ be a Cohen-Macaulay $R$-module.
If $\V(M)$ is contained in $\V(\X)$, then $M$ belongs to $\X$.
\end{prop}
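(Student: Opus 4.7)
The plan is to induct on $n := \dim\V_R(M)$, mirroring the structure of the proof of Proposition \ref{mmpr} but using Corollary \ref{wasur} in place of Proposition \ref{sdxr} at the key step. The base case $n = -\infty$ is handled by Lemma \ref{cv}(1), and the base case $n = 0$ follows from Lemma \ref{cv}(2), since $\syz^d k \in \X$ by hypothesis.

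For the inductive step assume $n > 0$ and fix $\p \in \min\V_R(M)$. Then $\p \ne \m$, so by hypothesis $R_\p$ has finite Cohen-Macaulay representation type and $\widehat{R_\p}$ has an isolated singularity. The key move is to identify $\add_{R_\p}\X_\p$ completely. By Lemma \ref{cv}(4), $\add_{R_\p}\X_\p$ is a resolving subcategory of $\mod R_\p$ contained in $\CM(R_\p)$. It contains $\omega_\p$, which is a canonical module of $R_\p$ (since $\omega \in \X$). Moreover, since $\p \in \V_R(\X)$, there exists $Y \in \X$ with $Y_\p$ nonfree, so $\add_{R_\p}\X_\p \ne \add R_\p$. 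Applying Corollary \ref{wasur} to the local ring $R_\p$ and the subcategory $\add_{R_\p}\X_\p$ forces $\add_{R_\p}\X_\p = \CM(R_\p)$. In particular $M_\p \in \add_{R_\p}\X_\p$ for every $\p \in \min\V_R(M)$.

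Having verified the hypothesis of Lemma \ref{cv}(5), we obtain an exact sequence
$$
0 \to L \to N \to X \to 0
$$
of Cohen-Macaulay $R$-modules with $X \in \X$, $M$ a direct summand of $N$, $\V_R(L) \subseteq \V_R(M) \subseteq \V_R(\X)$, and $\dim\V_R(L) < \dim\V_R(M) = n$. The induction hypothesis applied to $L$ yields $L \in \X$, whence $N \in \X$ by closure under extensions, and $M \in \X$ by closure under direct summands.

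The main obstacle is the verification that the localized subcategory $\add_{R_\p}\X_\p$ meets the hypotheses of Corollary \ref{wasur}: one needs it to be resolving, to lie inside $\CM(R_\p)$, and to contain the canonical module $\omega_\p$, all for a local ring $R_\p$ whose completion has an isolated singularity and which admits finite Cohen-Macaulay representation type. Once this bookkeeping is in place, the dichotomy "$\add R_\p$ or $\CM(R_\p)$" provided by Corollary \ref{wasur} does the heavy lifting, and the rest of the argument reduces to the standard induction scheme used throughout the paper.
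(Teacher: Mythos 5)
Your argument is correct and is essentially the paper's own proof: the same induction on $\dim\V_R(M)$, with Lemma \ref{cv}(1)(2) for the base cases, Corollary \ref{wasur} applied to $\add_{R_\p}\X_\p$ (via Lemma \ref{cv}(4)) to get $\add_{R_\p}\X_\p=\CM(R_\p)$, and Lemma \ref{cv}(5) plus the induction hypothesis to conclude. Your extra bookkeeping (that $\omega_\p$ is a canonical module of $R_\p$ lying in $\add_{R_\p}\X_\p$, and that $\p\in\V(\X)$ rules out the $\add R_\p$ alternative) only makes explicit what the paper leaves implicit.
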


\begin{proof}
The proposition is proved by induction on $n:=\dim\V_R(M)$.
Lemma \ref{cv}(1)(2) handles the cases $n=-\infty,0$.
When $n\ge 1$, let $\p\in\min\V(M)$.
Then $\p\ne\m$, whence $R_\p$ has finite Cohen-Macaulay representation type and its completion has an isolated singularity.
Lemma \ref{cv}(4) and Corollary \ref{wasur} imply that $\add_{R_\p}\X_\p=\CM(R_\p)$, which contains $M_\p$.
Lemma \ref{cv}(5) gives an exact sequence $0 \to L \to N \to X \to 0$ of Cohen-Macaulay $R$-modules with $X\in\X$, $\V(L)\subseteq\V(M)$ and $\dim\V(L)<n$ such that $M$ is a direct summand of $N$.
The induction hypothesis shows $L\in\X$, and so $M\in\X$.
\end{proof}

Now we obtain a classification theorem of resolving subcategories.

\begin{thm}\label{fcmt}
Let $R$ be a Cohen-Macaulay singular local ring with a canonical module $\omega$.
Suppose that for each nonmaximal prime ideal $\p$ the local ring $R_\p$ has finite Cohen-Macaulay representation type whose completion has an isolated singularity.
Then one has the following one-to-one correspondence:
$$
\begin{CD}
\left\{
\begin{matrix}
\text{Resolving subcategories}\\
\text{of $\mod R$ contained in $\CM(R)$}\\
\text{and containing $\omega,\syz^dk$}
\end{matrix}
\right\}
\begin{matrix}
@>{\V}>>\\
@<<{\V^{-1}_{\CM}}<
\end{matrix}
\left\{
\begin{matrix}
\text{Nonempty specialization-closed}\\
\text{subsets of $\Spec R$ contained in $\Sing R$}\\
\text{and containing $\ng(R)$}
\end{matrix}
\right\}.
\end{CD}
$$
\end{thm}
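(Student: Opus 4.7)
The plan is to verify that $\V$ and $\V^{-1}_{\CM}$ give well-defined maps between the two collections and to check they are mutually inverse. The substantive content is already packaged in Proposition \ref{fcmpr}; what remains is to confirm that the extra constraints (nonemptiness, containment of $\omega$ and of $\syz^d k$, containment of $\ng(R)$) correspond correctly under the two maps. The hypotheses on finite Cohen-Macaulay representation type on the punctured spectrum and isolated singularity of completions do not need to be reinvoked here, since they have already been absorbed into Proposition \ref{fcmpr}.

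First I would show that $\V$ is well-defined. Let $\X$ be a resolving subcategory of $\mod R$ contained in $\CM(R)$ and containing $\omega$ and $\syz^d k$. By Lemma \ref{cv}(6), $\V(\X)$ is a specialization-closed subset of $\Spec R$ contained in $\Sing R$. Since $R$ is singular, $\syz^d k$ is nonfree, so $\m\in\V(\syz^d k)\subseteq\V(\X)$, whence $\V(\X)$ is nonempty. Because $\omega_\p$ is a canonical module of $R_\p$ and $R_\p$ is Gorenstein precisely when $\omega_\p$ is $R_\p$-free, we have $\V(\omega)=\ng(R)$; since $\omega\in\X$, this gives $\ng(R)\subseteq\V(\X)$.

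Next I would verify that $\V^{-1}_{\CM}$ is well-defined. Let $\Phi$ be a nonempty specialization-closed subset of $\Spec R$ contained in $\Sing R$ with $\ng(R)\subseteq\Phi$. Lemma \ref{cv}(8) supplies that $\V^{-1}_{\CM}(\Phi)$ is a resolving subcategory of $\mod R$ contained in $\CM(R)$. Nonemptiness and specialization-closedness of $\Phi$ force $\m\in\Phi$, so $\syz^d k\in\V^{-1}_{\CM}(\Phi)$ since $\V(\syz^d k)=\{\m\}$. Also $\V(\omega)=\ng(R)\subseteq\Phi$ gives $\omega\in\V^{-1}_{\CM}(\Phi)$.

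Finally I would establish bijectivity. The equality $\V(\V^{-1}_{\CM}(\Phi))=\Phi$ is Lemma \ref{cv}(10). The containment $\X\subseteq\V^{-1}_{\CM}(\V(\X))$ is Lemma \ref{cv}(9); the reverse containment uses Proposition \ref{fcmpr}, which asserts that any Cohen-Macaulay $R$-module $M$ with $\V(M)\subseteq\V(\X)$ lies in $\X$ once $\X$ contains $\omega$ and $\syz^d k$ and the local hypotheses hold. Thus the two maps are mutually inverse. The only step that is not routine bookkeeping is this last application of Proposition \ref{fcmpr}, and that is precisely where the strong local conditions of the theorem are consumed.
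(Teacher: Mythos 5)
Your proposal is correct and mirrors the paper's own argument: the paper also reduces to the observation $\ng(R)=\V(\omega)$, invokes Lemma \ref{cv}(6)(8) for well-definedness, and uses Lemma \ref{cv}(9)(10) together with Proposition \ref{fcmpr} (substituted for Proposition \ref{mmpr} in the template proof of Theorem \ref{mmthm}) to obtain the mutually inverse bijections. You simply spell out the bookkeeping that the paper delegates by reference to the earlier proof.
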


\begin{proof}
We have $\ng(R)=\V(\omega)$.
The assertion is proved by the same argument as the proof of Theorem \ref{mmthm} except that Proposition \ref{mmpr} is replaced with Proposition \ref{fcmpr}.
\end{proof}

\begin{cor}\label{fcrtcor}
Let $R$ be a Cohen-Macaulay excellent singular local ring with a canonical module $\omega$ locally of finite Cohen-Macaulay representation type on the punctured spectrum.
Then one has the same one-to-one correspondence as in Theorem \ref{fcmt}.
\end{cor}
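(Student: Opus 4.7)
The plan is to deduce the corollary directly from Theorem \ref{fcmt} by verifying, for every nonmaximal prime ideal $\p$ of $R$, that the completion $\widehat{R_\p}$ has an isolated singularity. Everything else in the hypothesis of Theorem \ref{fcmt}, namely that $R$ is a Cohen-Macaulay singular local ring with canonical module and that $R_\p$ has finite Cohen-Macaulay representation type for each nonmaximal $\p$, is already granted. Note also that the localization of the canonical module $\omega$ at $\p$ serves as a canonical module for $R_\p$, and that excellence of $R$ passes to $R_\p$.

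First I would invoke the classical theorem, due to Auslander in the complete/Henselian case and generalized by Huneke--Leuschke, which asserts that a Cohen-Macaulay local ring of finite Cohen-Macaulay representation type must have an isolated singularity. Applied to $R_\p$, this yields that $R_\p$ has an isolated singularity, i.e., $(R_\p)_\q$ is regular for every prime $\q\subsetneq\p R_\p$.

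Next I would use excellence to lift this property to the completion. Since $R_\p$ is excellent, its formal fibers are geometrically regular. If $\mathfrak{P}$ is a nonmaximal prime of $\widehat{R_\p}$ and $\q=\mathfrak{P}\cap R_\p$, then $\q$ is a nonmaximal prime of $R_\p$, the localization $(R_\p)_\q$ is regular by the previous step, and the fiber $\widehat{R_\p}\otimes_{R_\p}\kappa(\q)$ is regular by excellence; these two facts together force $(\widehat{R_\p})_\mathfrak{P}$ to be regular. Hence $\widehat{R_\p}$ has an isolated singularity, completing the verification of the hypotheses of Theorem \ref{fcmt}.

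The main (indeed only nontrivial) obstacle in the plan is the ascent of isolated singularity to the completion for excellent rings; once that is in hand, Theorem \ref{fcmt} applies verbatim to give the stated bijection. The rest is routine bookkeeping of hypotheses.
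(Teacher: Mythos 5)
Your proof is correct and takes essentially the same route as the paper: excellence passes from $R$ to $R_\p$, the Huneke--Leuschke theorem gives that $R_\p$ has an isolated singularity, this ascends to the completion of $R_\p$ by excellence, and then Theorem \ref{fcmt} applies. The only difference is that where the paper cites \cite[Proposition 3.4]{stcm} for the ascent to the completion, you spell out the standard regular-formal-fiber argument directly.
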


\begin{proof}
Let $\p$ be a nonmaximal prime ideal of $R$.
Since $R$ is excellent, so is $R_\p$ (cf. \cite[Page 260]{M}).
The assumption that $R_\p$ is of finite Cohen-Macaulay representation type and \cite[Corollary 2]{HL} imply that $R_\p$ has an isolated singularity, hence so does the completion of the local ring $R_\p$ (cf. \cite[Proposition 3.4]{stcm}).
\end{proof}

%%%%%%%%%%%%%%%%%%%%%%%%%%%%%%%%%%%%%%%%%%%%%%%%%%%%%%%%%%%%%%%%%%%%%%%%%%%%%%%
%%%%%%%%%%%%%%%%%%%%%%%%%%%%%%%%%%%%%%%%%%%%%%%%%%%%%%%%%%%%%%%%%%%%%%%%%%%%%%%
\section{Examples}\label{sec7}
%%%%%%%%%%%%%%%%%%%%%%%%%%%%%%%%%%%%%%%%%%%%%%%%%%%%%%%%%%%%%%%%%%%%%%%%%%%%%%%
%%%%%%%%%%%%%%%%%%%%%%%%%%%%%%%%%%%%%%%%%%%%%%%%%%%%%%%%%%%%%%%%%%%%%%%%%%%%%%%

Combining the results obtained in the previous two sections with \cite[Theorem 5.13]{stcm}, now we have classifications of resolving subcategories over a Cohen-Macaulay local ring $R$ in each of the following three cases:
\begin{enumerate}[(a)]
\item
$R$ locally has minimal multiplicity on the punctured spectrum;
\item
$R$ locally has finite Cohen-Macaulay representation type on the punctured spectrum;
\item
$R$ is locally a hypersurface on the punctured spectrum.
\end{enumerate}

In this section, we construct several examples of a Cohen-Macaulay complete singular local ring $R$ satisfying some of these three conditions.
If $R$ has an isolated singularity, then we have $\Sing R=\{\m\}$, and the specialization-closed subsets of $\Spec R$ contained in $\Sing R$ are trivial.
Thus we shall construct examples so that $R$ does not have an isolated singularity.
Throughout the examples below, let $k$ be a field.

\begin{ex}
Let $R=k[[x,y,z]]/(x^2,xz,yz)$.

(1) The ring $R$ is a $1$-dimensional Cohen-Macaulay non-Gorenstein complete local ring with a parameter $y-z$.
The prime ideals of $R$ are $\p=(x,y)$, $\q=(x,z)$ and $\m=(x,y,z)$.
Since $R$ is not reduced, $R$ does not have an isolated singularity.

(2) We have $R_\p\cong k((z))$ and $R_\q\cong k[[x,y]]_{(x)}/(x^2)$, showing that $R$ is locally a hypersurface of finite Cohen-Macaulay representation type with minimal multiplicity on the punctured spectrum.
\end{ex}

\begin{ex}
Let $R=k[[x,y,z]]/(x^2,xy,y^2)$.

(1) The ring $R$ is a $1$-dimensional Cohen-Macaulay non-Gorenstein complete local ring.
The element $z$ is a parameter of $R$, and all the prime ideals are $\p=(x,y)$ and $\m=(x,y,z)$.
As $R$ is not reduced, $R$ does not have an isolated singularity.

(2) Since $(\p R_\p)^2=0$, $R$ is locally with minimal multiplicity on the punctured spectrum.

(3) The local ring $R_\p$ is not a hypersurface.
Hence $R$ is neither locally a hypersurface nor locally has finite Cohen-Macaulay representation type on the punctured spectrum.
\end{ex}

The following example is due to Shiro Goto.

\begin{ex}\label{goto}
Let $R=k[[x,y,z,w]]/(xy,z^2,zw,w^2)$.

(1) The ring $R$ is a Cohen-Macaulay non-Gorenstein complete local ring of dimension one that does not have minimal multiplicity.
All the prime ideals are $\p=(x,z,w)$, $\q=(y,z,w)$ and $\m=(x,y,z,w)$.
The element $x-y$ is a parameter of $R$.
Since it is not reduced, $R$ is not with an isolated singularity.

(2) The ring $R$ is neither locally a hypersurface nor locally of finite Cohen-Macaulay representation type on the punctured spectrum.
Indeed, $R_\p$ is isomorphic to the Artinian local ring $k[[y,z,w]]_{(z,w)}/(z^2,zw,w^2)$, which is not a hypersurface.

(3) We have $(\p R_\p)^2=0$ and $(\q R_\q)^2=0$.
Therefore $R$ locally has minimal multiplicity on the punctured spectrum.
\end{ex}

\begin{ex}\label{mmlclz}
Let $R=k[[x,y,z]]/(x^2-yz,xy,y^2)$.

(1) The ring $R$ is a $1$-dimensional Cohen-Macaulay non-Gorenstein complete local ring with a parameter $z$.
The prime ideals of $R$ are $\p=(x,y)$ and $\m=(x,y,z)$.

(2) Set $S=k[[x,y,z]]_{(x,y)}/(y-\frac{x^2}{z})$.
Then $S$ is a discrete valuation ring with maximal ideal $xS$.
The local ring $R_\p=S/(x^3)$ is an Artinian hypersurface and hence $R_\p$ has finite Cohen-Macaulay representation type.
But $R_\p$ does not have minimal multiplicity since $(\p R_\p)^2\ne 0$.
Hence $R$ is locally a hypersurface of finite Cohen-Macaulay representation type on the punctured spectrum, but $R$ does not locally have minimal multiplicity.
\end{ex}

\begin{ex}
Suppose that $k$ has characteristic zero.
Let $R=k[[x,y,z,w]]/(xy,xz,yz)$.

(1) The ring $R$ is a $2$-dimensional, Cohen-Macaulay, non-Gorenstein, reduced, complete local ring.
The elements $x+y+z,w$ form a system of parameters of $R$.

(2) Since $R$ is not a (normal) domain, $R$ does not have an isolated singularity.

(3) The ring $R$ is locally of finite Cohen-Macaulay representation type and locally with minimal multiplicity on the punctured spectrum.
Indeed, let $\p$ be a nonmaximal prime ideal of $R$.
Then at least one of $x,y,z,w$ is not in $\p$.
If $x\notin\p$, then $x$ is a unit as an element of $R_\p$.
The local ring $R_\p$ is regular, so it has both finite Cohen-Macaulay representation type and minimal multiplicity.
By symmetry, we have the same conclusion when either $y$ or $z$ is not in $\p$.
Now suppose that all of $x,y,z$ are in $\p$.
Then $\p$ contains the ideal $(x,y,z)$ of height two, so we have $\p=(x,y,z)$.
In this case, $w$ is a unit of $R_\p$.
Let $S$ be the completion of the local ring $k[[x,y,z,w]]_{(x,y,z)}$.
The ring $S$ is a $3$-dimensional regular complete local ring containing a field, and $S$ admits a coefficient field $K$.
Applying Cohen's structure theorem, we observe that the completion of the local ring $R_\p$ is isomorphic to the ring $T:=K[[X,Y,Z]]/(XY,XZ,YZ)$.
This is a $1$-dimensional Cohen-Macaulay local ring with a parameter $X+Y+Z$.
We see that $T$ has minimal multiplicity, and so does $R_\p$.
Consider the base change $\overline{T}=\overline{K}[[X,Y,Z]]/(XY,XZ,YZ)$ of $T$, where $\overline{K}$ denotes the algebraic closure of the field $K$.
Then $\overline{T}$ has finite Cohen-Macaulay representation type by \cite[Remark (9.16)]{Y}.
Since $\overline{T}$ is faithfully flat over $R_\p$, the ring $R_\p$ also has finite Cohen-Macaulay representation type by \cite[Theorem 1.5]{W}.

(4) The ring $R$ is not locally a hypersurface on the punctured spectrum.
In fact, for the prime ideal $\p=(x,y,z)$ of $R$, the local ring $R_\p$ is not a hypersurface.
\end{ex}

\begin{rem}
According to \cite[(7.1)]{S}, all known examples of a Cohen-Macaulay complete local $\C$-algebra of finite Cohen-Macaulay representation type that is not a hypersurface have minimal multiplicity.
\end{rem}

%%%%%%%%%%%%%%%%%%%%%%%%%%%%%%%%%%%%%%%%%%%%%%%%%%%%%%%%%%%%%%%%%%%%%%%%%%%%%%%
%%%%%%%%%%%%%%%%%%%%%%%%%%%%%%%%%%%%%%%%%%%%%%%%%%%%%%%%%%%%%%%%%%%%%%%%%%%%%%%
\begin{ac}
The author is indebted to the referee for a lot of valuable comments and suggestions.
The author also thanks Shiro Goto very much for constructing Example \ref{goto}.
\end{ac}
%%%%%%%%%%%%%%%%%%%%%%%%%%%%%%%%%%%%%%%%%%%%%%%%%%%%%%%%%%%%%%%%%%%%%%%%%%%%%%%
%%%%%%%%%%%%%%%%%%%%%%%%%%%%%%%%%%%%%%%%%%%%%%%%%%%%%%%%%%%%%%%%%%%%%%%%%%%%%%%

%%%%%%%%%%%%%%%%%%%%%%%%%%%%%%%%%%%%%%%%%%%%%%%%%%%%%%%%%%%%%%%%%%%%%%%%%%%%%%%
%%%%%%%%%%%%%%%%%%%%%%%%%%%%%%%%%%%%%%%%%%%%%%%%%%%%%%%%%%%%%%%%%%%%%%%%%%%%%%%
%%%%%%%%%%%%%%%%%%%%%%%%%%%%%%%%%%%%%%%%%%%%%%%%%%%%%%%%%%%%%%%%%%%%%%%%%%%%%%%
\end{document}